\numberwithin{equation}{section}
\theoremstyle{definition}
\numberwithin{equation}{section}
\newcommand{\ncom}{\newcommand}
\ncom{\beq}{\begin{equation}}
	\ncom{\eeq}{\end{equation}}
\ncom{\bea}{\begin{eqnarray*}}
	\ncom{\eea}{\end{eqnarray*}}
\ncom{\beqa}{\begin{eqnarray}}
	\ncom{\eeqa}{\end{eqnarray}}
\ncom{\nno}{\nonumber}
\ncom{\non}{\nonumber}
\ncom{\ds}{\displaystyle}
\ncom{\half}{\frac{1}{2}}
\ncom{\mbx}{\makebox{.25cm}}
\ncom{\hs}{\mbox{\hspace{.25cm}}}
\ncom{\rar}{\rightarrow}
\ncom{\Rar}{\Rightarrow}
\ncom{\noin}{\noindent}
\ncom{\bc}{\begin{center}}
	\ncom{\ec}{\end{center}}
\ncom{\sz}{\scriptsize}
\ncom{\rf}{\ref}
\ncom{\s}{\sqrt{2}}
\ncom{\sgm}{\sigma}
\ncom{\Sgm}{\Sigma}
\ncom{\psgm}{\sigma^{\prime}}
\ncom{\dt}{\delta}
\ncom{\Dt}{\Delta}
\ncom{\lmd}{\lambda}
\ncom{\Lmd}{\Lambda}
\ncom{\Th}{\Theta}
\ncom{\e}{\eta}
\ncom{\eps}{\epsilon}
\ncom{\pcc}{\stackrel{P}{>}}
\ncom{\lp}{\stackrel{L_{p}}{>}}
\ncom{\dist}{{\rm\,dist}}
\ncom{\sspan}{{\rm\,span}}
\ncom{\re}{{\rm Re\,}}
\ncom{\im}{{\rm Im\,}}
\ncom{\sgn}{{\rm sgn\,}}
\ncom{\ba}{\begin{array}}
	\ncom{\ea}{\end{array}}
\ncom{\hone}{\mbox{\hspace{1em}}}
\ncom{\htwo}{\mbox{\hspace{2em}}}
\ncom{\hthree}{\mbox{\hspace{3em}}}
\ncom{\hfour}{\mbox{\hspace{4em}}}
\ncom{\vone}{\vskip 2ex}
\ncom{\vtwo}{\vskip 4ex}
\ncom{\vonee}{\vskip 1.5ex}
\ncom{\vthree}{\vskip 6ex}
\ncom{\vfour}{\vspace*{8ex}}
\ncom{\norm}{\|\;\;\|}
\ncom{\integ}[4]{\int_{#1}^{#2}\,{#3}\,d{#4}}
\ncom{\vspan}[1]{{{\rm\,span}\{ #1 \}}}
\ncom{\dm}[1]{ {\displaystyle{#1} } }
\ncom{\ri}[1]{{#1} \index{#1}}
\newtheorem{theorem}{\bf Theorem}[section]
\newtheorem{remark}{\bf Remark}[section]
\newtheorem{proposition}{Proposition}[section]
\newtheorem{lemma}{Lemma}[section]
\newtheorem{corollary}{Corollary}[section]
\newtheoremstyle
{remarkstyle}
{}
{11pt}
{}
{}
{\bfseries}
{:}
{     }
{\thmname{#1} \thmnumber{#2} }
\theoremstyle{remarkstyle}
\def\eps{\varepsilon}
\begin{document}
\title{On multidimensional elephant random walk with stops and random step sizes}
	
\author[Shyan Ghosh]{Shyan Ghosh}
\address{Shyan Ghosh, Department of Mathematics, Indian Institute of Technology Bhilai, Durg, 491002, India.}
\email{shyanghosh@iitbhilai.ac.in}
\author[Manisha Dhillon]{Manisha Dhillon}
\address{Manisha Dhillon, Department of Mathematics, Indian Institute of Technology Bhilai, Durg, 491002, India.}
\email{manishadh@iitbhilai.ac.in}
\author[Kuldeep Kumar Kataria]{Kuldeep Kumar Kataria}
\address{Kuldeep Kumar Kataria, Department of Mathematics, Indian Institute of Technology Bhilai, Durg, 491002, India.}
\email{kuldeepk@iitbhilai.ac.in}
\subjclass[2020]{Primary: 60G50, 82C41; Secondary: 60G42}
\keywords{multidimensional elephant random walk; martingales; almost sure convergence; law of large numbers; law of iterated logarithm}		
\date{\today}
	
\begin{abstract}
In this paper, we study the number of moves in a multidimensional elephant random walk with stops. We establish several convergence results for the number of moves, including the law of large numbers and the law of iterated logarithm. Using a martingale approach, we study the multidimensional elephant random walk with random step sizes. For this model, we obtain several almost sure convergence results for the number of moves, including the law of large numbers, the quadratic strong law, the law of iterated logarithm and the central limit theorem. Similar convergence results are derived for the multidimensional elephant random walk with random step sizes.
\end{abstract}

\maketitle

\section{Introduction}
Random walks are extensively studied models that have various applications in physics, biology, finance, \textit{etc.} Many physical and biological systems are influenced by long-term dependence, and require models that incorporate memory effects. However, such memory effects can not be captured by standard random walks as they are Markovian, and have memoryless property. To overcome these limitations, Sch\"utz and Trimper (2004) introduced and studied a discrete-time random walk, namely, the elephant random walk (ERW) which has complete memory of its past. At each step, it is generated by recalling a randomly selected step from the past and modifying it according to a fixed parameter leading to its non-Markovian behaviour. 

Let us briefly describe the standard ERW. It is a one-dimensional discrete-time random walk on $\mathbb{Z}$ in which, at each time instant, the next step is determined by the entire history of the past steps taken so far. Here, the walker starts from the origin and takes steps of unit size. At the first time instant, the walker moves to the right with probability $s\in[0,1]$ and to the left with probability $1-s$. At each time instant $n\ge 2$, the walker selects one of its previous steps uniformly at random and repeats it with probability $s'\in[0,1]$ or moves in the opposite direction with probability $1-s'$. That is, the first increment $\sigma_1$ has the following distribution:
\begin{equation*}
	\sigma_{1}=\begin{cases}
		+1\ \ \text{with probability $s$},\\
		-1\ \ \text{with probability $1-s$},
	\end{cases}
\end{equation*}
and for all $n\ge 1$, the subsequent increments are given by
\begin{equation*}
	\sigma_{n+1}=\begin{cases}
		+\sigma_{\beta(n)}\ \ \text{with probability $s'$},\\
		-\sigma_{\beta(n)}\ \ \text{with probability $1-s'$},
	\end{cases}
\end{equation*}
where $\beta(n)\sim$ unif$\{1,2,\dots,n\}$ which is independent of $\{\sigma_1,\sigma_2,\dots,\sigma_n\}$.
After $n\ge1$ steps, the position of the walker is $E_n=\sigma_1+\sigma_2+\dots+\sigma_n$.

Kumar {\it et al.} (2010) extended ERW in a way where the walker may choose to stay at rest. It is known as the ERW with stops. Baur and Bertoin (2016) have shown some connection between the ERW and certain P\'olya-type urn scheme. They established some functional limit theorems for the ERW. Coletti {\it et al.} (2017) studied asymptotic behaviour of the ERW in different regimes. Bercu (2018) established results on the law of large numbers (LLN), quadratic strong law (QSL), law of iterated logarithm (LIL) and asymptotic normality of the ERW using a martingale approach. Recently, Roy {\it et al.} (2025) studied the ERW in triangular array setting. For more details on ERW and its extensions, we refer the reader to  Miyazaki and Takei (2020), Gut and Stadtm\"uller (2021a, 2021b), Dedecker {\it et al.} (2023), Nakano (2025), Dhillon and Kataria (2026), and references therein. 

Bercu and Laulin (2019) introduced and studied the multidimensional elephant random walk (MERW). An extension of it, that is, the MERW with stops on $\mathbb{Z}^d$, $d\ge 1$ was studied by Bercu (2025). It is briefly described as follows:

Here, the walker starts from origin, and in the first step, the walker moves in any one of the $2d$ directions with equal probability $1/2d$. For $n\ge 1$, before making its $(n+1)$-th move, the walker chooses a step uniformly from the previous steps, that is, the steps till $n$-th time. Then, the walker repeats the chosen step with probability $p$ or moves towards any one of the remaining $2d-1$ directions with probability $q$ or chooses to stay at rest with probability $r$. That is,
for $n\ge 1$, the $(n+1)$-th step can be described as
\begin{equation}\label{multstep}
	X_{n+1}=A_{n+1}X_{\beta(n)},
\end{equation}
where $A_{n+1}$ and $\beta(n)$ are independent, and both are independent of $\{X_1,X_2,\dots,X_n\}$. Here, $\beta(n)\sim$ unif\{1,2,\dots,n\} and
\begin{align}\label{A_n+1_stops}
	A_{n+1}=\begin{cases*}
		+I_d &\text{with probability}\,\, $p$,\\
		-I_d &\text{with probability}\,\, $q$,\\
		+J_d &\text{with probability}\,\, $q$,\\
		-J_d &\text{with probability}\,\, $q$,\\
		+J_d^2 &\text{with probability}\,\, $q$,\\
		-J_d^2 &\text{with probability}\,\, $q$,\\
		\hspace{4mm}\vdots \\
		+J_d^{d-1} &\text{with probability}\,\, $q$,\\
		-J_d^{d-1} &\text{with probability}\,\, $q$,\\
		\hspace{4mm} O &\text{with probability}\,\, $r$,
	\end{cases*}
\end{align}
where
\begin{equation*}
	I_d=\begin{pmatrix}
		1 &0 &0 &\cdots &0 &0\\
		0 &1 &0 &\cdots &0 &0\\
		\vdots &\vdots &\vdots &\ddots &\vdots &\vdots\\
		0 &0 &0 &\cdots &1 &0\\
		0 &0 &0 &\cdots &0 &1
	\end{pmatrix}
	, \ 
	J_d=\begin{pmatrix}
		0 &1 &0 &\cdots &0 &0\\
		0 &0 &1 &\cdots &0 &0\\
		\vdots &\vdots &\vdots &\ddots &\vdots &\vdots\\
		0 &0 &0 &\cdots &0 &1\\
		1 &0 &0 &\cdots &0 &0
	\end{pmatrix}
\end{equation*}
and $O$ is the null matrix of order $d$.
Also,
\begin{equation}\label{pqr}
	p+q(2d-1)+r=1.
\end{equation}
Here, the $k$-th step $X_k$ is a $d$-dimensional column vector which takes values in the set $\{0, \pm e_1, \pm e_2,\dots, \pm e_d\}$, where $e_i$ denotes the $d$-dimensional column vector whose $i$-th entry is 1 and all other entries are zero. For example, for some $k$, if $X_k=-e_i$ then at the $k$-th step, the walker moves one unit in the direction of negative  $i$-th coordinate axis. For this model, Bercu (2025) obtained several convergence results including the LLN, LIL, and established asymptotic normality in different regimes. Recently, Qin (2025) proved a conjecture of Bertoin (2022) by establishing that the MERW on $\mathbb{Z}^d$ is transient for $d\ge 3$. It is shown that the model exhibits a phase transition in dimensions $d=1$ and $d=2$, separating the recurrent and transient regimes. The one-dimensional ERW with random step sizes is discussed in Fan and Shao (2024). Dedecker \textit{et al.} (2023) derived its LIL and proved the central limit theorem (CLT), and obtained the rates of convergence in CLT.

In this paper, first, we study the number of moves performed by the walker in MERW with stops. We derive the conditional mean increments of the number of moves and establish a recursive relation for its expectation. The asymptotic behaviour of the solution of this recursive relation  is discussed. A suitable multiplicative martingale is constructed using the conditional mean increments. By using this multiplicative martingale, we obtain several  convergence results for the number of moves that include LLN and LIL. Later, we study the MERW with random step sizes using a martingale approach. In this case, we discuss the number of moves and construct a suitable martingale for it. Using this martingale, we establish several almost sure convergence results including LLN, QSL, LIL and CLT for the number of moves in MERW with random step sizes. Also, we derive  similar convergence results for the walk. Our results complement those obtained by Bercu and Laulin (2019), Zhang (2024) and Bercu (2025).
\section{Preliminaries}
In this paper, we use the following notations: Let $\mathbb{C}$, $\mathbb{R}$, $\mathbb{Z}$ and $\mathbb{R}_+$ denote the sets of complex numbers, real numbers, integers and positive real numbers, respectively. Also, let $
\mathbb{Z}^d=\mathbb{Z}\times\mathbb{Z}\times\cdots\times\mathbb{Z}$ and $\mathbb{R}^d=\mathbb{R}\times\mathbb{R}\times\cdots\times\mathbb{R}$ ($d$ copies). For a set $G$, $\mathbb{I}_G$ denotes its indicator function, and $\mathfrak{R}(z)$ denotes the real part of a complex number $z$. For a real sequence $\{x_n\}_{n\ge 1}$, we denote $\lim_{n\to\infty} x_n$ by $x_\infty$. For two sequences $\{x_n\}_{n\ge 1}$ and $\{y_n\}_{n\ge 1}$, the notations $x_n \sim y_n$ and $x_n= o(y_n)$ indicate that $x_n/y_n \to 1$ and $x_n/y_n \to 0$, respectively, as $n \to \infty$. For any matrix $A$, its transpose is denoted by $A^t$, its conjugate transpose is denoted by $A^*$ and if $A$ is a square matrix, its trace is denoted by $\mathrm{tr}(A)$. Also, $\lambda_{\max}(A)$ and $\lambda_{\min}(A)$ denote the maximum and minimum eigenvalues of $A$, respectively. The abbreviation a.s.\ stands for almost surely. In expressions involving conditional expectations, this abbreviation is omitted to avoid repetition.
\subsection{Generalized hypergeometric function}\label{genhyper}
For $a_i\in \mathbb{C}$, $i=1,2,\dots,m$ and $b_j \in \mathbb{C}\backslash\{ 0,-1,-2,\dots\}$, $j=1,2,\dots, n$, the generalized hypergeometric function is defined as follows (see Kilbas \textit{et al.} (2006), Eq. (1.6.28)):
\begin{equation}
	\begin{aligned}\label{GenHyp}
		_mF_n(a_1,a_2,\dots,a_m; b_1,b_2,\dots,b_n;z)=\sum_{k=0}^{\infty}\frac{(a_1)_k(a_2)_k\dots(a_m)_k}{(b_1)_k(b_2)_k\dots(b_n)_k}\frac{z^k}{k!},\, z\in\mathbb{C},
	\end{aligned}
\end{equation}
where 
\begin{equation*}
	(x)_k \coloneqq\begin{cases*}
		x(x+1)\dots(x+k-1), \  k\ge1,\\
		1, \  k=0, 
	\end{cases*}
\end{equation*}
denotes the Pochhammer symbol. For $m\le n$, the series \eqref{GenHyp} is absolutely convergent for all $z\in \mathbb{C}$. For $m=n+1$, it is absolutely convergent if either $|z|=1$ with $\mathfrak{R}\big(\sum_{j=1}^{n}b_j-\sum_{i=1}^{m}a_i\big)>0$ or $|z|<1$.	

\subsection{$\mathbb{L}^p$ space}\label{Lmspac}
Let $(\Omega,\mathcal{F},\mathbb{P})$ be a probability space. Then, $\mathbb{L}^p$ space is the collection of all random variables $X$ on $(\Omega,\mathcal{F},\mathbb{P})$ whose $p$-th absolute moment is finite, that is, $	\mathbb{E}(|X|^p)<\infty$, $p\ge1$.
\subsection{Predictable square variation}\label{predictable}
Let $\{M_n\}_{n\ge 0}$ be a sequence of random vectors with values in $\mathbb{R}^d$ which is adapted to a filtration $\{\mathcal{F}_n\}_{n\ge 0}$. The predictable square variation of $\{M_n\}_{n\ge 0}$ is a random sequence $\{\langle M\rangle_n\}_{n\ge 0}$ of $d\times d$ positive semi-definite symmetric matrices defined as follows (see Duflo (1997), Definition 2.1.8):
\begin{equation*}
	\langle M\rangle_{n+1}-\langle M\rangle_n=\mathbb{E}((M_{n+1}-M_n)(M_{n+1}-M_n)^t|\mathcal{F}_n), \ n\ge 0,
\end{equation*}
with $\langle M\rangle_0=0$.	

\subsection{Law of large numbers} Here, we collect some law of large numbers for martingales.

\begin{theorem}[Duflo (1997), Theorem 1.3.15]\label{thm_LLN1}
	Let $\{M_n, \mathcal{F}_n\}_{n\ge 0}$ be a square-integrable martingale with predictable square variation $\{\langle M\rangle_n\}_{n\ge 0}$. Also, let $\langle M\rangle_\infty=\lim_{n\to\infty}\langle M\rangle_n$. Then,\vspace{.1cm}\\
	\noindent{(i)}  $\lim_{n\to\infty}M_n=M_\infty$ a.s. on $\{\langle M\rangle_\infty < \infty\}$, where $M_\infty$ is a finite random variable,\vspace{.1cm}\\		
	\noindent{(ii)}  $\lim_{n\to\infty}M_n/\langle M\rangle_n=0$ a.s. on $\{\langle M\rangle_\infty = \infty\}$.
\end{theorem}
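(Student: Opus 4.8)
The plan is to reduce the multidimensional statement to a scalar one and then combine two classical ingredients: the $\mathbb{L}^2$ martingale convergence theorem for part (i), and a martingale-transform argument followed by Kronecker's lemma for part (ii), with (ii) bootstrapped from (i). Throughout, I write $\Delta M_k = M_k - M_{k-1}$ for the martingale differences, and recall from the definition in Section \ref{predictable} that $\langle M\rangle_k$ is $\mathcal{F}_{k-1}$-measurable (predictable), and that $\{\langle M\rangle_n\}$ is non-decreasing in the positive semi-definite order with $\mathrm{tr}\,\langle M\rangle_{n+1} - \mathrm{tr}\,\langle M\rangle_n = \mathbb{E}(|\Delta M_{n+1}|^2 \mid \mathcal{F}_n)$.

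For part (i), I would localize via the stopping times $\tau_a = \inf\{n \ge 0 : \mathrm{tr}\,\langle M\rangle_{n+1} > a\}$ for $a > 0$; predictability of $\langle M\rangle$ guarantees each $\tau_a$ is a stopping time. The stopped process $\{M_{n\wedge\tau_a}\}$ is again a martingale, and since $\mathbb{E}(|M_{n\wedge\tau_a}|^2) = \mathbb{E}(|M_0|^2) + \mathbb{E}(\mathrm{tr}\,\langle M\rangle_{n\wedge\tau_a}) \le \mathbb{E}(|M_0|^2) + a$, it is bounded in $\mathbb{L}^2$. The $\mathbb{L}^2$ martingale convergence theorem then yields an a.s.\ finite limit for $M_{n\wedge\tau_a}$. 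On the event $\{\mathrm{tr}\,\langle M\rangle_\infty \le a\}$ one has $\tau_a = \infty$, so $M_{n\wedge\tau_a} = M_n$ there; letting $a \to \infty$ along integers and using $\{\langle M\rangle_\infty < \infty\} = \bigcup_{a} \{\mathrm{tr}\,\langle M\rangle_\infty \le a\}$ gives a.s.\ convergence of $M_n$ on $\{\langle M\rangle_\infty < \infty\}$.

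For part (ii) it suffices, by applying the argument coordinatewise to $M_n$, to treat a scalar square-integrable martingale with non-decreasing predictable variation $A_n := \langle M\rangle_n$ satisfying $A_\infty = \infty$ (note $A_0 = 0$). Introduce the transformed martingale $N_n = \sum_{k=1}^n (1+A_k)^{-1}\Delta M_k$, which is well defined since the weights $(1+A_k)^{-1}$ are predictable and bounded. Its predictable variation telescopes: $\langle N\rangle_n = \sum_{k=1}^n (1+A_k)^{-2}(A_k - A_{k-1}) \le \sum_{k=1}^n \big((1+A_{k-1})^{-1} - (1+A_k)^{-1}\big) \le 1$, so $\langle N\rangle_\infty < \infty$ a.s.\ and part (i) shows $N_n$ converges a.s. Since $\Delta M_k = (1+A_k)\,\Delta N_k$, Kronecker's lemma applied with the weights $b_k = 1+A_k \uparrow \infty$ gives $(1+A_n)^{-1}(M_n - M_0) \to 0$, whence $M_n/A_n \to 0$ a.s.\ on $\{A_\infty = \infty\}$.

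The main obstacle is part (ii), specifically the bootstrap: the choice of weights $(1+A_k)^{-1}$ must simultaneously make the transform predictable, force $\langle N\rangle_\infty < \infty$ (so that (i) applies), and leave $\Delta M_k = (1+A_k)\,\Delta N_k$ in exactly the form required by Kronecker's lemma. Assembling the genuinely multidimensional statement — in particular interpreting the normalization $M_n/\langle M\rangle_n$ through the matrix $\langle M\rangle_n$, e.g.\ via $\lambda_{\min}(\langle M\rangle_n)$ or coordinatewise — is then routine bookkeeping, since the positive semi-definite monotonicity of $\langle M\rangle_n$ reduces everything to the scalar estimates above.
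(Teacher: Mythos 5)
This statement is an imported classical result: the paper gives no proof of it, citing Duflo (1997), Theorem 1.3.15, and applies it only to scalar martingales (namely $\mathcal{M}_n=Z_n^*/a_n$ in the proof of Theorem 3.3). Your argument is correct and is in substance the standard proof found in the cited source: for (i), the localization $\tau_a=\inf\{n\ge 0:\mathrm{tr}\,\langle M\rangle_{n+1}>a\}$ is a legitimate stopping time precisely because $\langle M\rangle_{n+1}$ is $\mathcal{F}_n$-measurable, the identity $\mathbb{E}\parallel M_{n\wedge\tau_a}\parallel^2=\mathbb{E}\parallel M_0\parallel^2+\mathbb{E}(\mathrm{tr}\,\langle M\rangle_{n\wedge\tau_a})\le \mathbb{E}\parallel M_0\parallel^2+a$ is valid by orthogonality of increments and predictability of the indicator $\mathbb{I}_{\{k\le\tau_a\}}$, and the union over integer $a$ covers $\{\langle M\rangle_\infty<\infty\}$ since a positive semi-definite limit matrix is finite if and only if its trace is; for (ii), the weighted transform $N_n=\sum_{k=1}^n(1+A_k)^{-1}\Delta M_k$ with the telescoping bound $(1+A_k)^{-2}(A_k-A_{k-1})\le (1+A_{k-1})^{-1}-(1+A_k)^{-1}$ gives $\langle N\rangle_\infty\le 1$ everywhere, so (i) applies, and the pathwise use of Kronecker's lemma with $b_k=1+A_k\uparrow\infty$ on $\{A_\infty=\infty\}$ is sound.

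One caveat on your closing paragraph: the claim that the genuinely multidimensional statement with matrix normalization $\langle M\rangle_n^{-1/2}M_n$ follows by ``routine bookkeeping'' is an overreach. On $\{\mathrm{tr}\,\langle M\rangle_\infty=\infty\}$ individual coordinates may well have finite predictable variation, so a coordinatewise application of the scalar result does not control $\lambda_{\min}(\langle M\rangle_n)$, and the correct vector form carries an extra correction factor $f(\log s_n)$ --- this is exactly why the paper separately quotes Duflo's Theorem 4.3.15 as its Theorem 2.3 rather than deducing it from Theorem 2.1. Since the statement under review is the scalar-normalized one and is only ever invoked for scalar martingales here, this does not affect the validity of your proof of the theorem as stated.
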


\begin{theorem}[Duflo (1997), Theorem 1.3.24 ]\label{thm_LLN2}
	Let $\{\epsilon_n\}_{n\ge 0}$ be a square-integrable random sequence which takes values in $\mathbb{C}^d$ and is adapted to some filtration $\{\mathcal{F}_n\}_{n\ge 0}$ such that 
	\begin{equation*}
		\mathbb{E}(\epsilon_{n+1}|\mathcal{F}_n)=0 \,\,\,\text{and}\,\,\, \sup_{n\ge1}\mathbb{E}(\parallel\epsilon_{n+1}\parallel^2|\mathcal{F}_n)\le C, \ \text{a.s.},
	\end{equation*}
	 where $C$ is a finite random variable.
	Also, let $\{\Phi_n\}_{n\ge 0}$ be a sequence of complex $d$-dimensional random variables which is adapted to $\{\mathcal{F}_n\}_{n\ge 0}$, and $s_n=\sum_{k=0}^{n}\parallel\Phi_k\parallel^2$, $\lim_{n\to\infty}s_n=s_\infty$ and $M_n=\sum_{k=1}^{n}\langle \Phi_{k-1},\epsilon_k\rangle$. Then, \vspace{.1cm}\\
	\noindent{(i)}  $\{M_n\}_{n\ge 0}$ converges a.s. on the set $\{s_\infty <\infty\}$,	\vspace{.1cm}\\
	\noindent{(ii)} $\lim_{n\to\infty}M_n/s_{n-1}=0$ a.s. on the set $\{s_\infty =\infty\}$.
\end{theorem}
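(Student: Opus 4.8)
The plan is to realise $\{M_n\}_{n\ge 0}$ as a martingale transform of $\{\epsilon_n\}$ by the predictable weights $\{\Phi_{n-1}\}$, and then to read off both conclusions from Theorem \ref{thm_LLN1}. Because $M_n$ is complex scalar-valued, I would apply Theorem \ref{thm_LLN1} separately to the real martingales $\mathfrak{R}(M_n)$ and $\mathfrak{I}(M_n)$, for each of which the predictable square variation is an ordinary nonnegative scalar sequence and the ratio appearing in part (ii) is a genuine scalar quotient; the two pieces are recombined at the end.

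First I would check the martingale property of $\{M_n,\mathcal{F}_n\}_{n\ge 0}$. Its increment is $M_n-M_{n-1}=\langle\Phi_{n-1},\epsilon_n\rangle$, and since $\Phi_{n-1}$ is $\mathcal{F}_{n-1}$-measurable while $\mathbb{E}(\epsilon_n\mid\mathcal{F}_{n-1})=0$, linearity of conditional expectation gives $\mathbb{E}(M_n-M_{n-1}\mid\mathcal{F}_{n-1})=0$. Writing $N_n$ for either $\mathfrak{R}(M_n)$ or $\mathfrak{I}(M_n)$, the Cauchy--Schwarz inequality in $\mathbb{C}^d$ gives $|M_n-M_{n-1}|^2\le\|\Phi_{n-1}\|^2\|\epsilon_n\|^2$, and since $\Phi_{n-1}$ is $\mathcal{F}_{n-1}$-measurable,
\begin{equation*}
	\langle N\rangle_n=\sum_{k=1}^{n}\mathbb{E}\big((N_k-N_{k-1})^2\mid\mathcal{F}_{k-1}\big)\le\sum_{k=1}^{n}\|\Phi_{k-1}\|^2\,\mathbb{E}\big(\|\epsilon_k\|^2\mid\mathcal{F}_{k-1}\big)\le C\,s_{n-1}.
\end{equation*}
This comparison between the intrinsic clock $\langle N\rangle_n$ and the observable weights $s_{n-1}$ drives the entire argument.

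For part (i), on $\{s_\infty<\infty\}$ the bound gives $\langle N\rangle_\infty\le C\,s_\infty<\infty$, so Theorem \ref{thm_LLN1}(i) forces $N_n$ to converge a.s.; applying this to both the real and imaginary parts shows that $M_n$ converges. For part (ii), on $\{s_\infty=\infty\}$ I would split according to the behaviour of $\langle N\rangle_\infty$. Where $\langle N\rangle_\infty<\infty$, Theorem \ref{thm_LLN1}(i) gives a finite limit for $N_n$, so $N_n/s_{n-1}\to0$ because $s_{n-1}\to\infty$; where $\langle N\rangle_\infty=\infty$, Theorem \ref{thm_LLN1}(ii) gives $N_n/\langle N\rangle_n\to0$, and the comparison above yields
\begin{equation*}
	\Big|\frac{N_n}{s_{n-1}}\Big|=\Big|\frac{N_n}{\langle N\rangle_n}\Big|\frac{\langle N\rangle_n}{s_{n-1}}\le C\,\Big|\frac{N_n}{\langle N\rangle_n}\Big|\longrightarrow0.
\end{equation*}
In either subcase $N_n/s_{n-1}\to0$ on $\{s_\infty=\infty\}$, and recombining the real and imaginary parts gives $M_n/s_{n-1}\to0$.

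The step I expect to be most delicate is the case analysis in part (ii). Since $\langle N\rangle_n\le C\,s_{n-1}$ is only an inequality, the event $\{s_\infty=\infty\}$ need not coincide with $\{\langle N\rangle_\infty=\infty\}$, and one must separately dispose of the ``defective'' event on which the weights diverge while the martingale's quadratic variation stays bounded; the convergence statement of Theorem \ref{thm_LLN1} is precisely what controls this event, and the two subcases together exhaust $\{s_\infty=\infty\}$. The only other point requiring care is the reduction to real scalar martingales, namely confirming that the hypotheses pass to $\mathfrak{R}(M_n)$ and $\mathfrak{I}(M_n)$ and that each has predictable square variation dominated by $C\,s_{n-1}$; once this is in place the remaining estimates are routine.
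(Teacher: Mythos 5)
The paper does not prove this statement; it is imported verbatim from Duflo (1997, Theorem 1.3.24), so there is no internal proof to compare against. Judged on its own, your argument is correct and is essentially the standard proof of this result: realize $M_n$ as the transform $\sum_{k\le n}\langle\Phi_{k-1},\epsilon_k\rangle$, dominate the predictable square variation by $C\,s_{n-1}$ via conditional Cauchy--Schwarz and the $\mathcal{F}_{k-1}$-measurability of $\Phi_{k-1}$, and then reduce both conclusions to Theorem \ref{thm_LLN1}. Your handling of part (ii) is exactly right and is indeed the delicate point: since $\langle N\rangle_n\le C\,s_{n-1}$ is only one-sided, $\{s_\infty=\infty\}$ splits into $\{\langle N\rangle_\infty<\infty\}$, where convergence of $N_n$ plus $s_{n-1}\to\infty$ kills the ratio, and $\{\langle N\rangle_\infty=\infty\}$, where $N_n/\langle N\rangle_n\to0$ combines with the domination; the two subcases exhaust the event, and the reduction to $\mathfrak{R}(M_n)$ and $\mathfrak{I}(M_n)$ is harmless because $(\mathfrak{R}z)^2\le|z|^2$.

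Two technicalities should be acknowledged in a complete write-up. First, no moment assumptions are placed on $\{\Phi_n\}$, so $M_n$ need not be integrable and is a martingale only in the generalized (locally square-integrable) sense; Theorem \ref{thm_LLN1} as quoted applies to square-integrable martingales, so one should localize, e.g.\ with the stopping times $\tau_a=\inf\{n:s_n\ge a\}$, for which the stopped transform is genuinely square-integrable (its increments satisfy $\mathbb{E}(|M_{n\wedge\tau_a}-M_{(n-1)\wedge\tau_a}|^2)\le a\,\mathbb{E}(\|\epsilon_n\|^2)<\infty$), apply your argument to each stopped process, and let $a\to\infty$ along the integers. Second, the hypothesis bounds $\mathbb{E}(\|\epsilon_{n+1}\|^2\mid\mathcal{F}_n)$ only for $n\ge1$, so the $k=1$ increment is not covered by $C$; since $\mathbb{E}(\|\epsilon_1\|^2\mid\mathcal{F}_0)$ is a.s.\ finite by square-integrability, replacing $C$ by $\max\bigl(C,\mathbb{E}(\|\epsilon_1\|^2\mid\mathcal{F}_0)\bigr)$, still a finite random variable, repairs this without affecting anything downstream. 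Neither point changes the structure of your proof; with these repairs it is a complete and correct argument.
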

For the next result, we adopt the notations introduced in Section 4.2.1 of Duflo (1997).
\begin{theorem}[Duflo (1997), Theorem 4.3.15]\label{LLN}
Suppose that $\{M_n,\mathcal{F}_n\}_{n\ge 0}$ be a vector martingale with predictable square variation $\{\langle M\rangle_n\}_{n\ge0}$. For any continuous increasing function $f: \mathbb{R}_+ \to \mathbb{R}_+ $ such that $\int_{1}^{\infty}\mathrm{d}t/f(t) <\infty$. Then, on $\{s_\infty=\infty\}$, we have
\begin{equation*}
	\parallel \langle M\rangle_n^{-1/2} M_n\parallel^2=o\big(f(\log s_n)/\lambda_{\min}(\langle M\rangle_n)\big) \ \text{a.s.},
\end{equation*}
where $s_n=\text{tr}(\langle M\rangle_n)$.
\end{theorem}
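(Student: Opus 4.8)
The plan is to read this as a strong law with a convergence rate for the matrix-normalised martingale, and to reduce it to the classical scalar machinery (Chow's strong law together with Kronecker's lemma) applied through the predictable square variation. The object to be controlled is the quadratic form
\[
\|\langle M\rangle_n^{-1/2} M_n\|^2 = M_n^t \langle M\rangle_n^{-1} M_n = \mathrm{tr}\big(\langle M\rangle_n^{-1} M_n M_n^t\big),
\]
so that the task, on $\{s_\infty=\infty\}$ where the bracket is eventually positive definite, is to bound $M_n^t\langle M\rangle_n^{-1}M_n$ at the indicated rate.

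First I would establish the one-dimensional core. For a scalar square-integrable martingale $m_n$ with predictable variation $\langle m\rangle_n$, Chow's strong law gives $m_n/g(\langle m\rangle_n)\to 0$ a.s.\ whenever $g$ is increasing and $\sum_n \Delta\langle m\rangle_n / g(\langle m\rangle_n)^2<\infty$. Choosing $g(x)^2 = x\,f(\log x)$ and substituting $u=\log x$ turns this series into a Riemann sum for $\int_1^\infty \mathrm{d}u/f(u)$, which is finite by hypothesis; hence on $\{\langle m\rangle_\infty=\infty\}$ one obtains the scalar rate $m_n^2 = o\big(\langle m\rangle_n f(\log\langle m\rangle_n)\big)$ a.s. This is the estimate that is fed into the vector argument, the integrability condition $\int_1^\infty \mathrm{d}t/f(t)<\infty$ entering precisely here.

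Next I would pass to the matrix setting. The difficulty is that $\langle M\rangle_n^{-1/2}M_n$ is not a martingale, because the normaliser is random and $n$-dependent. I would therefore expand $M_n^t\langle M\rangle_n^{-1}M_n$ recursively via $M_n=M_{n-1}+\Delta M_n$ and the Sherman--Morrison update for $\langle M\rangle_n^{-1}$, producing an increment of the form (martingale difference) $+\ \Delta M_n^t\langle M\rangle_n^{-1}\Delta M_n\ +$ (negative-definite correction). Summing, the ``energy'' term is handled by the deterministic matrix inequality $\sum_n \Delta M_n^t\langle M\rangle_n^{-1}\Delta M_n \le C\log\det\langle M\rangle_n$, which on $\{s_\infty=\infty\}$ is of order $\log s_n$; here one replaces $\Delta M_n\Delta M_n^t$ by its conditional mean $\Delta\langle M\rangle_n$ and absorbs the discrepancy into an auxiliary martingale. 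The martingale-difference terms are then shown to converge or to be negligible relative to $\log s_n$ using Theorem \ref{thm_LLN1} (and, where the conditional second moments are uniformly bounded, Theorem \ref{thm_LLN2}), and a final Kronecker-type step with the weight $f$ upgrades the crude $O(\log s_n)$ control to the sharp $o(f(\log s_n))$ rate. Comparing norms through $\lambda_{\min}(\langle M\rangle_n)$ then yields the stated form.

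The main obstacle is exactly this random, time-varying whitening: controlling the fluctuation of $\Delta M_n\Delta M_n^t$ about $\Delta\langle M\rangle_n$, and showing that the resulting auxiliary martingale is $o(\log s_n)$, is the technical heart of the argument, and it is where square-integrability and the divergence of the bracket on $\{s_\infty=\infty\}$ are genuinely used. Since the statement is quoted verbatim from Duflo (1997), in practice I would cite it directly rather than reproduce this scheme in full.
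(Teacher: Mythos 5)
You should first be aware that the paper contains no proof of this statement: it is imported verbatim (with attribution) from Duflo (1997), Theorem 4.3.15, and used as a black box, so the only meaningful comparison is with Duflo's argument, which your sketch partially reconstructs — and citing it directly, as you suggest at the end, is exactly what the authors do. The scalar core of your plan is sound: with $g(x)^2=x f(\log x)$, the series $\sum_n \Delta\langle m\rangle_n/\big(\langle m\rangle_n f(\log\langle m\rangle_n)\big)$ is dominated by $\int_1^\infty \mathrm{d}u/f(u)$ because $x\mapsto 1/(xf(\log x))$ is decreasing, and this is indeed where the hypothesis $\int_1^\infty \mathrm{d}t/f(t)<\infty$ enters.

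The matrix step, however, has two genuine flaws. First, the Sherman--Morrison formula inverts rank-one updates, but the predictable bracket evolves by $\Delta\langle M\rangle_n=\mathbb{E}(\Delta M_n\Delta M_n^t\,|\,\mathcal{F}_{n-1})$, a positive semi-definite matrix of arbitrary rank, so the update formula you invoke does not apply; the standard substitute is the concavity inequality $\mathrm{tr}\big(\langle M\rangle_n^{-1}\Delta\langle M\rangle_n\big)\le \log\det\langle M\rangle_n-\log\det\langle M\rangle_{n-1}$, whose partial sums are at most $d\log s_n+O(1)$. Second, your plan to replace $\Delta M_n\Delta M_n^t$ by $\Delta\langle M\rangle_n$ and ``absorb the discrepancy into an auxiliary martingale'' is both unnecessary and unaffordable: controlling that discrepancy martingale requires conditional moments of order higher than two, which the theorem does not assume. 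The point you miss is that $\langle M\rangle_n$ is $\mathcal{F}_{n-1}$-measurable, so for $h_n=M_n^t\langle M\rangle_n^{-1}M_n$ the cross term $2M_{n-1}^t\langle M\rangle_n^{-1}\Delta M_n$ has zero conditional mean, the correction $M_{n-1}^t\big(\langle M\rangle_n^{-1}-\langle M\rangle_{n-1}^{-1}\big)M_{n-1}$ is nonpositive, and
\begin{equation*}
\mathbb{E}(h_n\,|\,\mathcal{F}_{n-1})\le h_{n-1}+\mathrm{tr}\big(\langle M\rangle_n^{-1}\Delta\langle M\rangle_n\big),
\end{equation*}
after which a Robbins--Siegmund argument with weights built from $1/f$ gives $h_n=o(f(\log s_n))$ on $\{s_\infty=\infty\}$ under square integrability alone. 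Finally, note what this scheme actually proves: $h_n=o(f(\log s_n))$, hence $\|\langle M\rangle_n^{-1}M_n\|^2\le \lambda_{\min}(\langle M\rangle_n)^{-1}h_n=o\big(f(\log s_n)/\lambda_{\min}(\langle M\rangle_n)\big)$. The display as transcribed in the paper, with the exponent $-1/2$ on the left, is strictly stronger and is false already for $d=1$ (it would force $M_n^2=o(f(\log s_n))$, contradicting the law of the iterated logarithm); Duflo's statement carries $\langle M\rangle_n^{-1}$, and the weaker, correct form is all the paper uses downstream (in \eqref{llnForm}, via $\|M_n\|^2\le\lambda_{\max}(\langle M\rangle_n)\,h_n$), so your claim to prove the statement ``verbatim'' should be amended accordingly.
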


\subsection{Quadratic strong law}
The following QSL for martingales will be used:
 \begin{theorem}[Bercu (2004), Theorem 3]\label{quadLaw}
	Let $\{\epsilon_n\}_{n\ge 1}$ be a sequence of real martingale differences adapted to an appropriate filtration $\{\mathcal{F}_n\}_{n\ge 1}$, and let $\{\Phi_n\}_{n\ge 0}$ be a sequence of random variables adapted to $\{\mathcal{F}_n\}_{n\ge 0}$. Define a real martingale transform $\{H_n\}_{n\ge 1}$ by $H_n=\sum_{k=1}^{n}\Phi_{k-1}\epsilon_k$
	and the explosion coefficient associated with $\{\Phi_n\}_{n\ge 0}$ by $f_n=\Phi_n^2/v_n$, where $v_n=\sum_{k=0}^{n}\Phi_k^2$.
	
Let $\{\epsilon_n\}_{n\ge 1}$ satisfies the following conditions:\vspace{0.1cm}\\
\noindent{(i)} $\mathbb{E}(\epsilon_{n+1}^2|\mathcal{F}_n)=\sigma^2$ a.s.,\vspace{0.1cm}\\
\noindent{(ii)} for some integer $m\ge 1$, $\sup_{n\ge 0}\mathbb{E}(|\epsilon_{n+1}|^a|\mathcal{F}_n)<\infty$, for some $a> 2m$,\vspace{0.1cm}\\
\noindent{(iii)} the explosion coefficient $f_n$ tends to zero a.s.\vspace{0.1cm}\\
Then,
	\begin{equation*}
		\lim_{n\to \infty}\frac{1}{\log v_n}\sum_{k=1}^{n}f_k \bigg(\frac{H_k^2}{v_{k-1}}\bigg)^m = \frac{(2m)!\sigma^{2m}}{m! 2^m} \ \text{a.s.}
	\end{equation*}
\end{theorem}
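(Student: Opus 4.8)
The plan is to prove the identity by induction on the integer $m$, after recognizing that the target constant $\frac{(2m)!\sigma^{2m}}{m!2^m}$ is precisely the $2m$-th moment $\mathbb{E}(Z^{2m})$ of a centered Gaussian $Z$ of variance $\sigma^2$. The sequence $c_m:=\frac{(2m)!\sigma^{2m}}{m!2^m}$ satisfies $c_m=(2m-1)\sigma^2 c_{m-1}$, and the entire argument is engineered to reproduce this recursion at the level of the normalized sums $S_n^{(m)}:=\sum_{k=1}^n f_k\,(H_k^2/v_{k-1})^m$. Two structural facts drive everything. First, since $v_k-v_{k-1}=\Phi_k^2$, telescoping gives $\sum_{k=1}^n f_k=\sum_{k=1}^n\frac{v_k-v_{k-1}}{v_k}\sim\log v_n$ under hypothesis (iii) that $f_k\to 0$. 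Second, the martingale strong laws of Theorems \ref{thm_LLN1} and \ref{thm_LLN2} allow me to discard any martingale transform whose predictable square variation is $O(\log v_n)$, since such a transform is then $o(\log v_n)$ a.s.

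For the base case $m=1$ (the classical quadratic strong law) I would expand $H_k^2=H_{k-1}^2+2H_{k-1}\Phi_{k-1}\epsilon_k+\Phi_{k-1}^2\epsilon_k^2$, divide by $v_k$, and use $H_{k-1}^2/v_k=H_{k-1}^2/v_{k-1}-f_kH_{k-1}^2/v_{k-1}$ to obtain, after summation,
\[
\frac{H_n^2}{v_n}+\sum_{k=1}^n f_k\frac{H_{k-1}^2}{v_{k-1}}=\sum_{k=1}^n\frac{2H_{k-1}\Phi_{k-1}\epsilon_k}{v_k}+\sum_{k=1}^n\frac{\Phi_{k-1}^2\epsilon_k^2}{v_k}+\frac{H_0^2}{v_0}.
\]
In the last sum I split $\epsilon_k^2=\sigma^2+(\epsilon_k^2-\sigma^2)$ using (i): the deterministic part contributes $\sigma^2\sum_k\Phi_{k-1}^2/v_k\sim\sigma^2\log v_n$, while the centered part and the first sum are, up to the harmless replacement of $v_k$ by the predictable $v_{k-1}$, martingale transforms with zero-mean increments. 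Bounding $H_n^2/v_n=o(\log v_n)$ by the martingale law of the iterated logarithm (equivalently Theorem \ref{LLN} with $f(t)=t(\log t)^2$) and showing the two transforms are $o(\log v_n)$ via Theorem \ref{thm_LLN2}, I divide by $\log v_n$ to conclude $S_n^{(1)}/\log v_n\to\sigma^2=c_1$; the difference between $\sum_k f_kH_{k-1}^2/v_{k-1}$ and $S_n^{(1)}$ is itself lower order and washes out.

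For the inductive step I assume $S_n^{(m-1)}/\log v_n\to c_{m-1}$ and analyze $H_n^{2m}/v_n^m$ by the same telescoping device. Writing $H_k^{2m}=(H_{k-1}+\Phi_{k-1}\epsilon_k)^{2m}$, taking $\mathbb{E}(\cdot\,|\mathcal{F}_{k-1})$, and isolating the $j=2$ term of the binomial expansion produces the predictable drift $\binom{2m}{2}\sigma^2\Phi_{k-1}^2H_{k-1}^{2m-2}=m(2m-1)\sigma^2\Phi_{k-1}^2H_{k-1}^{2m-2}$ by (i), while the change of denominator $v_{k-1}^{-m}\to v_k^{-m}$ contributes a term $\sim-m\,f_kH_{k-1}^{2m}/v_{k-1}^m$. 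Summing, normalizing, and absorbing the indexing discrepancies ($\Phi_{k-1}$ versus $\Phi_k$, $v_{k-1}$ versus $v_k$, $H_{k-1}$ versus $H_k$) into $o(\log v_n)$ errors via $v_k/v_{k-1}\to1$, I reach
\[
\frac{H_n^{2m}}{v_n^m}=m(2m-1)\sigma^2\,S_n^{(m-1)}-m\,S_n^{(m)}+R_n+\mathcal{M}_n,
\]
where $\mathcal{M}_n$ is a martingale transform and $R_n$ collects the binomial terms with $3\le j\le 2m$. Dividing by $m\log v_n$, using the inductive hypothesis, and showing that $H_n^{2m}/v_n^m$, $R_n$ and $\mathcal{M}_n$ are all $o(\log v_n)$, I obtain $S_n^{(m)}/\log v_n\to(2m-1)\sigma^2 c_{m-1}=c_m$, closing the induction.

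The main obstacle is the almost-sure negligibility of the fluctuation terms $R_n$ and $\mathcal{M}_n$ using \emph{only} the moment hypothesis (ii) with $a>2m$. The crude route --- bounding the predictable square variation of $\mathcal{M}_n$ through conditional moments of $\epsilon_k^{2j}$ --- would demand moments up to order $4m$, which are not assumed; the role of the threshold $a>2m$ is that, after truncating each $\epsilon_k$ at a level growing like a small power of $v_k$ and compensating, the truncated transforms acquire summable (or $o(\log v_n)$) predictable square variation so that Theorems \ref{thm_LLN1}(i) and \ref{thm_LLN2} apply, while the discarded tails are governed by a Borel--Cantelli estimate that is exactly summable precisely when $a>2m$. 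The higher binomial terms in $R_n$ carry extra factors $\Phi_{k-1}^{\,j-2}$, hence extra powers of $f_k\to0$, and are dominated in the same truncated regime. Establishing these truncation estimates, together with the $\log$-refined martingale bound $H_n^{2m}/v_n^m=o(\log v_n)$, is the technical heart; once they are in place, the algebraic recursion $c_m=(2m-1)\sigma^2 c_{m-1}$ delivers the stated limit.
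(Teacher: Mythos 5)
First, a framing remark: the paper contains no proof of Theorem \ref{quadLaw} to compare against --- it is imported verbatim as a preliminary result from Bercu (2004, Theorem 3). So your proposal must be measured against the original, and there your skeleton is the right shape: the telescoping of $H_n^{2m}/v_n^m$, the drift $\binom{2m}{2}\sigma^2 f_k (H_{k-1}^2/v_{k-1})^{m-1}$, the $-m f_k (H_{k-1}^2/v_{k-1})^m$ term from the change of denominator, and the recursion $c_m=(2m-1)\sigma^2 c_{m-1}$ are all correct, and you correctly identify that $a>2m$ must enter through the fluctuation control. But the two estimates you defer are not routine technicalities; they are the theorem, and one of them is mis-derived as stated.

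The first gap is the bound $H_n^2/v_n=o(\log v_n)$. Your parenthetical claim that it follows from Theorem \ref{LLN} with $f(t)=t(\log t)^2$ is false: that choice yields only $H_n^2/v_n=o\big(\log v_n\,(\log\log v_n)^2\big)$, off by a factor $(\log\log v_n)^2$, and no admissible $f$ in that theorem gets you to $o(\log v_n)$. The alternative you invoke, a martingale LIL, is not available from the paper's Theorem \ref{LIL} under hypotheses (i)--(iii) alone: that result requires $\sum_n f_n^{1+\alpha}<\infty$, which does not follow from $f_n\to 0$. For instance, a deterministic design with $f_n\sim 1/\log n$ (so that $\log v_n\sim n/\log n\to\infty$) satisfies (iii) while $\sum_n f_n^{1+\alpha}=\infty$ for every $\alpha\in(0,1)$. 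A self-normalized LIL of the form $H_n^2/v_{n-1}=O(\log\log v_n)$ under (i)--(iii) is true, but it is itself a nontrivial imported ingredient (Stout--Chaâbane type), of essentially the same depth as the statement you are proving; your sketch neither proves it nor cites a usable version.

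The second gap is that the induction does not close as described. The martingale transform $\mathcal{M}_n$ coming from the $j=1$ binomial term has increments proportional to $H_{k-1}^{2m-1}\Phi_{k-1}\epsilon_k/v_k^m$, so its predictable square variation is of order $\sum_{k\le n} f_k\,(H_{k-1}^2/v_{k-1})^{2m-1}$ --- exponent $2m-1$, which for $m\ge 2$ exceeds both the exponent $m-1$ controlled by the inductive hypothesis and the exponent $m$ you are trying to establish. Hence Theorems \ref{thm_LLN1} and \ref{thm_LLN2} cannot certify $\mathcal{M}_n=o(\log v_n)$ from the induction alone; you need the a priori LIL bound of the previous paragraph (which gives $\langle\mathcal{M}\rangle_n=O(\log v_n(\log\log v_n)^{2m-1})$ and then $\mathcal{M}_n=o(\log v_n)$ via a square-root strong law), or a separate bootstrap on $\limsup_n(\log v_n)^{-1}\sum_k f_k(H_k^2/v_{k-1})^{m'}$ up to $m'=2m-1$. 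Your truncation-plus-Borel--Cantelli scheme addresses the tails of $\epsilon_k$ (where $a>2m$ indeed bites, via H\"older on conditional moments, since conditional moments of $\epsilon_k^{2j}$ up to order $4m$ are unavailable), but it says nothing about these unbounded powers of $H_{k-1}$ in the variation, which is where the circularity sits. These are exactly the points at which Bercu's actual argument does its work; as written, your proposal assumes its hardest steps.
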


\begin{proposition}[Duflo (1997), Proposition 4.2.8]\label{quadform}
	Let $\mathcal{H}$ denote the set of positive semi-definite Hermitian matrices of order $d$. Suppose $\{A_n\}_{n\ge 1}$ and $A$ are random matrices with values in $\mathcal{H}$ and associated quadratic forms are $q_{A_n}$ and $q_A$, respectively, such that $
	\lim_{n\to \infty}q_{A_n}(u)=q_A(u)$ for all $u\in\mathbb{C}^d$. Here, for a matrix $A$, the associated quadratic form is defined as $q_A(u)=u^*Au$.
	Then, $\lim_{n\to \infty}A_n=A$ a.s.	
\end{proposition}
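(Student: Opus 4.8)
The plan is to exploit the elementary fact that a Hermitian matrix is completely determined by its associated quadratic form, and that each of its entries is a \emph{fixed} (deterministic) linear combination of the values of that quadratic form at finitely many vectors. Pointwise convergence of the quadratic forms at these finitely many vectors will then force entrywise convergence of the matrices, which, since $d$ is finite, is the same as convergence in any matrix norm.

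Concretely, I would recover the entries by polarization. For the diagonal entries one has directly $(A_n)_{jj}=q_{A_n}(e_j)$ and $A_{jj}=q_A(e_j)$. For the off-diagonal entries, writing $u^*Av$ for the sesquilinear form of a Hermitian $A$ and using $v^*Au=\overline{u^*Av}$, a short computation gives
\begin{equation*}
	q_A(e_j+e_k)=A_{jj}+A_{kk}+2\Re A_{jk},\qquad q_A(e_j+ie_k)=A_{jj}+A_{kk}-2\Im A_{jk},
\end{equation*}
so that
\begin{equation*}
	\Re A_{jk}=\half\big(q_A(e_j+e_k)-q_A(e_j)-q_A(e_k)\big),\quad \Im A_{jk}=\half\big(q_A(e_j)+q_A(e_k)-q_A(e_j+ie_k)\big).
\end{equation*}
The identical formulas hold with $A$ replaced by $A_n$. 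Thus every entry of $A_n$ (resp.\ $A$) is one and the same linear combination of the real numbers $q_{A_n}(u)$ (resp.\ $q_A(u)$), as $u$ ranges over the finite set $\{e_j,\ e_j+e_k,\ e_j+ie_k : 1\le j,k\le d\}$.

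Finally I would pass to the limit. By hypothesis $q_{A_n}(u)\to q_A(u)$ a.s.\ for each fixed $u$; since only finitely many vectors $u$ occur above, the corresponding exceptional null sets may be discarded all at once, and on the resulting event of probability one the convergence holds simultaneously for all of them. Applying the displayed formulas on this event yields $(A_n)_{jj}\to A_{jj}$, $\Re(A_n)_{jk}\to\Re A_{jk}$ and $\Im(A_n)_{jk}\to\Im A_{jk}$ for all $j,k$, hence $(A_n)_{jk}\to A_{jk}$ entrywise and therefore $A_n\to A$ (say in the Frobenius norm) a.s. There is no substantive obstacle here: the positive semi-definiteness plays no essential role beyond ensuring that each $q_{A_n}(u)$ is real, and the only point requiring care is the bookkeeping of the almost-sure qualifier, namely that a finite intersection of probability-one events again has probability one.
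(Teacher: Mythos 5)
Your proposal is correct and complete: the polarization identities you write are valid for Hermitian matrices, each entry of $A_n$ is thereby a fixed linear combination of $q_{A_n}(u)$ over the finite set $\{e_j,\ e_j+e_k,\ e_j+ie_k\}$, and discarding finitely many null sets at once legitimately upgrades the pointwise a.s.\ hypothesis to simultaneous a.s.\ entrywise convergence. The paper itself offers no proof of this statement---it is quoted verbatim from Duflo (1997), Proposition 4.2.8---and your argument is precisely the standard polarization proof given there, so there is nothing to reconcile.
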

\subsection{Law of iterated logarithm}
The following iterated logarithm result for a martingale holds (see Duflo (1997), Corollary 6.4.25):
\begin{theorem}\label{LIL}
Let $\{\epsilon_n\}_{n\ge 0}$ be a sequence of random variables adapted to filtration $\{\mathcal{F}_n\}_{n\ge 0}$ such that a.s.\vspace{.1cm}\\
	\noindent{(i)} $\mathbb{E}(\epsilon_{n+1}|\mathcal{F}_n)=0$ and $\mathbb{E}(\epsilon^2_{n+1}|\mathcal{F}_n)\le \sigma^2$, $n\ge 1$,\vspace{.1cm}\\
	\noindent{(ii)} for some $0<\alpha<1$, we have  $\displaystyle \sup_{n\ge1} \mathbb{E}(|\epsilon_{n+1}|^{2+2\alpha}|\mathcal{F}_n)<\infty$.\\
	Also, let $\{\Phi_n\}_{n\ge 0}$ and $\{U_n\}_{n\ge 0}$ be adapted to $\{\mathcal{F}_n\}_{n\ge 0}$ such that $|\Phi_n|\le U_n$, and $M_n=\sum_{k=1}^{n}\Phi_{k-1}\epsilon_k$ and $\tau_n=\sum_{k=0}^{n}U_k^2$. If $
	\tau_\infty=\infty$ and $\sum_{n=0}^{\infty}U_n^{2+2\alpha}\tau_n^{-1-\alpha}<\infty$ a.s. then
\begin{equation*}
\limsup_{n\to\infty}\frac{|M_n|}{\sqrt{2\tau_{n-1}\log\log \tau_{n-1}}}\le \sigma \ \text{a.s.}
\end{equation*}
\end{theorem}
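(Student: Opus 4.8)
The statement is a martingale law of the iterated logarithm, and since the conclusion is only an upper bound on the $\limsup$, the natural route is through an exponential (Freedman-type) deviation inequality for the martingale transform $\{M_n\}$ combined with a Borel--Cantelli argument along a geometric subsequence. The plan is first to record that the predictable square variation of $\{M_n\}$ is
\begin{equation*}
	\langle M\rangle_n=\sum_{k=1}^{n}\Phi_{k-1}^2\,\mathbb{E}(\epsilon_k^2\mid\mathcal{F}_{k-1})\le \sigma^2\sum_{k=1}^{n}U_{k-1}^2\le \sigma^2\tau_{n-1},
\end{equation*}
which follows from hypothesis (i) and $|\Phi_n|\le U_n$. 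Because $\tau_\infty=\infty$, the normalizing quantity $\tau_{n-1}$ diverges, so it suffices to establish the LIL with the diverging clock $\tau_{n-1}$ and then use the displayed bound to recover the constant $\sigma$ on the right.

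Next I would remove the unboundedness of the increments by truncation at a level proportional to $\eta_k\sqrt{\tau_{k-1}}$ for a suitable sequence $\eta_k\downarrow 0$, splitting $\Phi_{k-1}\epsilon_k$ into its restriction to $\{U_{k-1}|\epsilon_k|\le \eta_k\sqrt{\tau_{k-1}}\}$ and a remainder. A conditional Markov inequality based on hypothesis (ii) bounds the conditional probability that the remainder is nonzero by a constant multiple of $U_{k-1}^{2+2\alpha}\eta_k^{-(2+2\alpha)}\tau_{k-1}^{-1-\alpha}$; since $\sum_k U_k^{2+2\alpha}\tau_k^{-1-\alpha}<\infty$ by assumption, one may let $\eta_k\downarrow 0$ slowly enough that these quantities remain summable, and then the conditional Borel--Cantelli lemma forces the remainder to vanish for all large $k$ almost surely. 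Consequently $M_n$ and the truncated transform $\widetilde M_n$ eventually differ by a constant and share the same LIL upper bound, while the truncated increments now satisfy $|\widetilde d_k|\le \eta_k\sqrt{\tau_{k-1}}$ with $\eta_k\downarrow 0$; I would also check that truncation perturbs the square variation only negligibly.

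With bounded increments in hand, I would apply a Freedman-type exponential bound to $\widetilde M_n$ on the event that its square variation is controlled by $\sigma^2\tau_{n-1}$, evaluated at the threshold $x_n=(1+\delta)\sigma\sqrt{2\tau_{n-1}\log\log\tau_{n-1}}$. Since the truncated increments are $o(\sqrt{\tau_{n-1}})$, the correction factor in the inequality tends to $1$ and the probability is asymptotically $(\log\tau_{n})^{-(1+\delta)^2}$. Choosing a subsequence $n_j$ with $\tau_{n_j}\sim\theta^{\,j}$ for fixed $\theta>1$ makes these probabilities summable in $j$ whenever $\delta>0$, so the Borel--Cantelli lemma caps $\limsup_j |\widetilde M_{n_j}|/\sqrt{2\tau_{n_j-1}\log\log\tau_{n_j-1}}$ by $(1+\delta)\sigma$. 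A Doob maximal inequality then controls the oscillation of $\widetilde M_n$ for $n_j\le n< n_{j+1}$, and letting $\theta\downarrow 1$ followed by $\delta\downarrow 0$ transfers the bound to the full sequence and to $M_n$, giving the claim.

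The main obstacle is the truncation step, where two competing demands must be met with a single choice of $\eta_k$: on the one hand $\eta_k$ must decrease slowly enough that the discarded large-increment part stays summable---this is exactly where the $(2+2\alpha)$-moment bound of hypothesis (ii) and the series $\sum_k U_k^{2+2\alpha}\tau_k^{-1-\alpha}<\infty$ are consumed---and on the other hand it must decrease fast enough that $\eta_k\sqrt{\log\log\tau_{k-1}}\to 0$, which is what allows the exponential inequality to deliver the sharp constant $2$ inside the square root. Verifying that the hypotheses leave enough slack to satisfy both requirements simultaneously, and then propagating the auxiliary parameters $\delta$ and $\theta$ through the subsequence and maximal-inequality arguments so that they may be sent to their limits, is the delicate part; the remaining estimates are routine.
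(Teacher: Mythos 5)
You should first note that the paper itself contains no proof of this statement: it is quoted as a known auxiliary result, Corollary 6.4.25 of Duflo (1997), and used as a black box in Sections 3 and 4. So the only meaningful benchmark is Duflo's own proof, and your plan reconstructs exactly the classical Stout-type argument that underlies it: bound the predictable variation by $\langle M\rangle_n\le\sigma^2\tau_{n-1}$, truncate the increments at a level $\eta_k\sqrt{\tau_{k-1}}$ with $\eta_k\downarrow 0$, apply an exponential deviation inequality at the threshold $(1+\delta)\sigma\sqrt{2\tau\log\log\tau}$, use Borel--Cantelli along a geometric scale $\theta^j$, and let $\theta\downarrow1$, $\delta\downarrow0$. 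The architecture is correct, and your identification of where hypothesis (ii) and the series $\sum_n U_n^{2+2\alpha}\tau_n^{-1-\alpha}<\infty$ are consumed (controlling the truncated-away part) matches the intended role of those assumptions.

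There are, however, two genuine gaps as written. First, truncation without recentering destroys the martingale property, so a Freedman-type inequality does not apply to your $\widetilde M_n$: you must subtract the conditional means $\mathbb{E}\bigl(\Phi_{k-1}\epsilon_k\mathbb{I}_{\{U_{k-1}|\epsilon_k|>\eta_k\sqrt{\tau_{k-1}}\}}\,\big|\,\mathcal{F}_{k-1}\bigr)$, after which $M_n-\widetilde M_n$ is \emph{not} eventually constant; instead you must show the accumulated centering terms are $o(\sqrt{\tau_{n-1}})$, which follows from the Markov bound of order $U_{k-1}^{2+2\alpha}\eta_k^{-(1+2\alpha)}\tau_{k-1}^{-(1/2+\alpha)}$ combined with Kronecker's lemma applied to the convergent series in the hypothesis (note $\tau_{k-1}^{-(1/2+\alpha)}$ is \emph{larger} than $\tau_{k-1}^{-(1+\alpha)}$, so summability alone is not available and Kronecker is really needed). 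Second, $\tau_n$ is an adapted \emph{random} sequence, so ``choosing $n_j$ with $\tau_{n_j}\sim\theta^j$'' produces random indices; plain Borel--Cantelli over events indexed by such $n_j$, and a Doob maximal inequality between them, are not directly legitimate. The standard repair, and the route Duflo actually takes in Section 6.4, is to set $n_j=\inf\{n:\tau_n\ge\theta^j\}$ (stopping times) and to replace fixed-time Freedman bounds by the maximal (Ville) inequality for the exponential supermartingale $\exp\bigl(\lambda\widetilde M_n-\tfrac{\lambda^2}{2}(1+o(1))\langle\widetilde M\rangle_n\bigr)$, which controls $\max_{n\le n_{j+1}}\widetilde M_n$ uniformly over the random horizon, absorbs your separate oscillation step, and also accommodates the fact that your truncation level $\eta_k\sqrt{\tau_{k-1}}$ is merely predictable rather than a deterministic constant. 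Both defects are repairable by standard means, so your proposal is essentially the textbook proof modulo these fixes, but as stated the exponential-inequality step would fail on the non-martingale $\widetilde M_n$ and on the random clock.
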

\subsection{Central limit theorem}
The following CLT for martingales will be used:	
\begin{theorem}[Duflo (1997), Corollary 2.1.10]\label{CLT}	Let $\{M_n,\mathcal{F}_n\}_{n\ge 0}$ be a real, square-integrable vector martingale whose predictable square variation is $\{\langle M\rangle_n\}_{n\ge 0}$. Suppose that for a real, deterministic sequence $\{b_n\}_{n\ge 0}$ increasing to $\infty$, the following assumptions hold:\\
\noindent {(i)} $b_n^{-1}\langle M\rangle_n\xrightarrow{p}\Lambda$, where $\Lambda$ is a deterministic symmetric positive semi-definite matrix,\\
\noindent {(ii)} Lindeberg's condition is satisfied, that is, for all $\epsilon>0$,
\begin{equation*}
b_n^{-1}\sum_{k=1}^{n}\mathbb{E}(\parallel M_k-M_{k-1}\parallel^2 \mathbb{I}_{\{\parallel M_k-M_{k-1}\parallel \ge \epsilon \sqrt{b_n}\}}|\mathcal{F}_{k-1})\xrightarrow{p} 0.
\end{equation*}
Then, $b_n^{-1/2}M_n\xrightarrow{d} \mathcal{N}(0,\Lambda)$ and     $\lim_{n\to\infty}b_n^{-1}M_n=0
$ a.s. Here, $\xrightarrow{p} $ and $\xrightarrow{d}$ denotes the convergence in probability and convergence in distribution, respectively. 
\end{theorem}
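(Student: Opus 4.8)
Since this is Corollary 2.1.10 of Duflo (1997), the most economical route is simply to invoke that reference; the sketch below indicates the structure one would follow to reprove it. The plan is to reduce the vector statement to the scalar martingale central limit theorem via the Cram\'er--Wold device, and to extract the almost sure assertion from the strong law for martingales recorded in Theorem \ref{thm_LLN1}.

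First I would fix an arbitrary $u\in\mathbb{R}^d$ and consider the real-valued martingale $N_n=u^tM_n$ adapted to $\{\mathcal{F}_n\}_{n\ge0}$. Its predictable square variation is $\langle N\rangle_n=u^t\langle M\rangle_n u$, so assumption (i) gives $b_n^{-1}\langle N\rangle_n\xrightarrow{p}\sigma_u^2$, where $\sigma_u^2=u^t\Lambda u\ge0$. Because $|N_k-N_{k-1}|\le\|u\|\,\|M_k-M_{k-1}\|$, the Lindeberg condition (ii) for $M$ transfers verbatim to the increments of $N$. These two facts are exactly the hypotheses of the classical scalar martingale central limit theorem, whose characteristic-function estimate yields $b_n^{-1/2}N_n\xrightarrow{d}\mathcal{N}(0,\sigma_u^2)$. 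Since $u$ was arbitrary, the Cram\'er--Wold theorem upgrades this to $b_n^{-1/2}M_n\xrightarrow{d}\mathcal{N}(0,\Lambda)$.

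For the almost sure statement I would again work through $N_n=u^tM_n$ and apply Theorem \ref{thm_LLN1}. On the event $\{\langle N\rangle_\infty=\infty\}$ part (ii) gives $N_n/\langle N\rangle_n\to0$ a.s., and when $\sigma_u^2>0$ this combines with $\langle N\rangle_n\sim\sigma_u^2 b_n$ to give $b_n^{-1}N_n\to0$; on the complementary event $\{\langle N\rangle_\infty<\infty\}$ part (i) says $N_n$ converges to a finite limit, and since $b_n\to\infty$ we again obtain $b_n^{-1}N_n\to0$. Choosing $u=e_1,\dots,e_d$ recovers $b_n^{-1}M_n\to0$ a.s.

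The delicate point is the passage from the in-probability control $b_n^{-1}\langle M\rangle_n\xrightarrow{p}\Lambda$ to an almost sure conclusion, especially along degenerate directions where $\sigma_u^2=0$: there the naive factorisation $b_n^{-1}N_n=(b_n^{-1}\langle N\rangle_n)(N_n/\langle N\rangle_n)$ mixes an in-probability factor with an a.s.\ factor and does not by itself force a.s.\ convergence, so one must instead rely on the dichotomy of Theorem \ref{thm_LLN1} exactly as above. The scalar martingale CLT itself, namely the Taylor-expansion control of the conditional characteristic functions $\mathbb{E}(e^{it\Delta N_k}\mid\mathcal{F}_{k-1})$ under Lindeberg's condition, is the genuinely technical ingredient, but it is classical and is precisely what Duflo's corollary packages.
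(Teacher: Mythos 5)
The paper itself contains no proof of this statement: it is imported verbatim as Corollary 2.1.10 of Duflo (1997) among the preliminaries, so your opening move of simply invoking the reference is exactly what the paper does. Your sketch of the distributional half is also the standard argument one expects behind such a result: for $N_n=u^tM_n$ one indeed has $\langle N\rangle_n=u^t\langle M\rangle_n u$, and since $|N_k-N_{k-1}|\le \|u\|\,\|M_k-M_{k-1}\|$ the Lindeberg condition for $M$ passes (after rescaling $\epsilon$ by $\|u\|$) to $N$, so the scalar martingale CLT plus Cram\'er--Wold gives $b_n^{-1/2}M_n\xrightarrow{d}\mathcal{N}(0,\Lambda)$. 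That half is sound.

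The almost sure half of your sketch, however, has a genuine gap --- precisely the one you flag and then claim the dichotomy resolves, which it does not. On $\{\langle N\rangle_\infty=\infty\}$, Theorem \ref{thm_LLN1}(ii) gives $N_n/\langle N\rangle_n\to 0$ a.s., but your next step invokes $\langle N\rangle_n\sim\sigma_u^2\,b_n$ as an almost sure asymptotic equivalence, whereas assumption (i) supplies it only in probability; monotonicity of the bracket does not upgrade this for a general deterministic $b_n\uparrow\infty$ (for instance, with independent events $A_n$ satisfying $\mathbb{P}(A_n)=1/n$ and $b_{n+1}\ge 2b_n$, the monotone sequence $V_n=b_n(1+\mathbb{I}_{A_n})$ satisfies $V_n/b_n\to 1$ in probability while $V_n/b_n=2$ infinitely often a.s.). Thus on the infinite-bracket event you have no pathwise upper bound on $\langle N\rangle_n/b_n$, and in degenerate directions ($\sigma_u^2=0$) that event need not be null, so $N_n=o(\langle N\rangle_n)$ a.s.\ yields nothing about $N_n/b_n$. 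The dichotomy of Theorem \ref{thm_LLN1} merely splits the sample space into the two events; it does not bridge in-probability control of the bracket to the almost sure control your factorisation needs. (The case $\{\langle N\rangle_\infty<\infty\}$ is handled correctly.) So, read as a self-contained proof, the a.s.\ conclusion is not established and genuinely requires Duflo's own finer argument or an additional hypothesis such as a.s.\ convergence of $b_n^{-1}\langle M\rangle_n$; read as the paper reads it --- a cited black box --- your proposal stands in exactly the same position as the paper.
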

\section{MERW with stops}	
Here, we study the number of moves in MERW with stops. The steps at which the walker remains at rest are referred as delays. Note that the number of moves in MERW with stops till $n$-th step is at least $1$ which follows from its construction. Thus, the number of delays till $n$-th step is at most $n-1$. 

Let $Z_n^*$ and $Z_n$ denote the number of moves and delays till $n$-th step, respectively. Then, for all $n\ge 1$, we have
\begin{equation}\label{0+1}
	Z_n^* + Z_n=n.
\end{equation}
Also, 
\begin{equation}\label{ones}
	Z_n^*=\sum_{k=1}^{n}X_k^t X_k= \sum_{k=1}^{n}\parallel X_k\parallel^2,\ n\ge 1.
\end{equation}
Let $\mathcal{G}_n=\sigma(X_1,X_2,\dots,X_n)$ be the $\sigma$-field generated by the complete past till $n$-th step, and $G\in \mathcal{G}_n$. Then, 
\begin{align}
	\int_G \mathbb{E}(X_{n+1}^t X_{n+1}|\mathcal{G}_n)\,\mathrm{d}\mathbb{P}
	&=\int_G X_{n+1}^t X_{n+1}\,\mathrm{d}\mathbb{P} \nonumber \\
	&=\mathbb{E}(X_{\beta(n)}^t A_{n+1}^t A_{n+1}X_{\beta(n)}\mathbb{I}_G), \ \text{(using \eqref{multstep})} \nonumber\\
	&=\sum_{k=1}^{n}\mathbb{P}\{\beta(n)=k\}\mathbb{E}(X_{\beta(n)}^t A_{n+1}^t A_{n+1}X_{\beta(n)}\mathbb{I}_G | \beta(n)=k) \nonumber\\
	&=\frac{1}{n}\sum_{k=1}^{n}\mathbb{E}(X_k^t A_{n+1}^t A_{n+1}X_k \mathbb{I}_G) \label{indep01}\\
	&=\frac{1}{n}\sum_{k=1}^{n}\mathbb{P}\{A_{n+1}=I_d\}\mathbb{E}(X_k^t A_{n+1}^t A_{n+1}X_k \mathbb{I}_G|A_{n+1}=I_d)\nonumber\\
	&\ \ +\frac{1}{n}\sum_{k=1}^{n}\mathbb{P}\{A_{n+1}=-I_d\}\mathbb{E}(X_k^t A_{n+1}^t A_{n+1}X_k \mathbb{I}_G|A_{n+1}=-I_d)\nonumber\\
	&\ \ +\frac{1}{n}\sum_{k=1}^{n}\sum_{i=1}^{d-1}\mathbb{P}\{A_{n+1}=J_d^i\}\mathbb{E}(X_k^t A_{n+1}^t A_{n+1}X_k \mathbb{I}_G| A_{n+1}=J_d^i)\nonumber\\
	&\ \ +\frac{1}{n}\sum_{k=1}^{n}\sum_{i=1}^{d-1}\mathbb{P}\{A_{n+1}=-J_d^i\}\mathbb{E}(X_k^t A_{n+1}^t A_{n+1}X_k \mathbb{I}_G|A_{n+1}= -J_d^i)\nonumber\\
	&=\frac{p+q}{n}\sum_{k=1}^{n} \mathbb{E}(X_k^t X_k \mathbb{I}_G) +\frac{2q}{n}\sum_{k=1}^{n}\sum_{i=1}^{d-1}\mathbb{E}(X_k^t X_k \mathbb{I}_G)\nonumber\\
	&=\frac{p+q(2d-1)}{n}\sum_{k=1}^{n}\int_G \parallel X_k \parallel^2 \,\mathrm{d}\mathbb{P}, \nonumber
\end{align}
where \eqref{indep01} follows from the independence of $\beta(n)$ with $\{X_1, X_2,\dots,X_n\}$ and $A_{n+1}$. Also, the penultimate step follows from the independence of $A_{n+1}$ and $\{X_1, X_2,\dots,X_n\}$.

As $X_k$, $k=1,2,\dots, n$ are $\mathcal{G}_n$-measurable,  by using Radon-Nikodym theorem, we get
\begin{equation}\label{norm_n+1}
	\mathbb{E}(\parallel X_{n+1} \parallel^2 |\mathcal{G}_n)=\frac{1-r}{n}Z_n^*,\ n\ge 1,
\end{equation}	
which follows from \eqref{pqr} and \eqref{ones}. Again, from \eqref{ones} and \eqref{norm_n+1}, we have
\begin{equation}\label{n+1_star}
	\mathbb{E}(Z_{n+1}^*|\mathcal{G}_n)=\Big(1+\frac{1-r}{n}\Big)Z_n^*, \ n\ge 1.
\end{equation}
Thus,
\begin{equation}\label{star1}
	\mathbb{E}(Z_{n+1}^*)=\Big(1+\frac{1-r}{n}\Big)\mathbb{E}(Z_n^*), \ n\ge 1.
\end{equation}
On solving the recurrence relation \eqref{star1}, we obtain
\begin{equation*}
	\mathbb{E}(Z_n^*)=\frac{\Gamma(n+1-r)}{\Gamma(2-r)\Gamma(n)}.
\end{equation*}
Now, by using Eq. (11) of Sch\"utz and Trimper (2004), the asymptotic behaviour of $\mathbb{E}(Z_n^*)$ is obtained in the following form:
\begin{equation}\label{star2}
	\mathbb{E}(Z_n^*)\sim \frac{n^{1-r}}{\Gamma(2-r)} \ \text{as} \ n\to\infty.
\end{equation}	
\begin{remark}
	For $r<1$ in \eqref{star2}, it follows that the expected number of moves approaches $\infty$ as $n \to \infty$. That is, in a longer run the walker performs infinitely many moves on an average in this regime.
\end{remark}
From  \eqref{n+1_star}  
and Lemma 2.1 of Gut and Stadtm\"uller (2021b), we have the following lemma:
\begin{lemma}\label{lemmultmar}
	Let $a_1=1$ and
	\begin{equation}\label{a_k}
		a_n=\prod_{k=1}^{n-1}\Big(1+ \frac{1-r}{k}\Big),\ n\ge 2.
	\end{equation}
	Then, $\{\mathcal{M}_n,\mathcal{G}_n\}_{n\ge 1}$ is a martingale, where $\mathcal{M}_n=Z_n^*/a_n$ and $\mathcal{G}_n=\sigma(X_1,X_2,\dots,X_n)$.
\end{lemma}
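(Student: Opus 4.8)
The plan is to verify the three defining properties of a martingale for the sequence $\{\mathcal{M}_n,\mathcal{G}_n\}_{n\ge 1}$: adaptedness, integrability, and the one-step conditional-expectation identity. Adaptedness is immediate, since $Z_n^*=\sum_{k=1}^n\|X_k\|^2$ is a Borel function of $X_1,\dots,X_n$ and each $a_n$ is a deterministic constant, so $\mathcal{M}_n=Z_n^*/a_n$ is $\mathcal{G}_n$-measurable. For integrability I would note that $0\le Z_n^*\le n$ by the construction of the walk (equivalently, that the closed form $\mathbb{E}(Z_n^*)=\Gamma(n+1-r)/(\Gamma(2-r)\Gamma(n))$ derived above is finite), whence $\mathbb{E}|\mathcal{M}_n|=\mathbb{E}(Z_n^*)/a_n<\infty$ for every $n$.

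The main computation is the martingale identity $\mathbb{E}(\mathcal{M}_{n+1}\mid\mathcal{G}_n)=\mathcal{M}_n$. First I would record the one-step recurrence satisfied by the normalizing sequence: directly from \eqref{a_k},
\[
a_{n+1}=\prod_{k=1}^{n}\Big(1+\frac{1-r}{k}\Big)=a_n\Big(1+\frac{1-r}{n}\Big),\qquad n\ge 1,
\]
where the case $n=1$ uses $a_1=1$. Then, pulling the deterministic factor $a_{n+1}$ out of the conditional expectation and invoking the conditional mean increment \eqref{n+1_star},
\[
\mathbb{E}(\mathcal{M}_{n+1}\mid\mathcal{G}_n)=\frac{1}{a_{n+1}}\,\mathbb{E}(Z_{n+1}^*\mid\mathcal{G}_n)=\frac{1}{a_{n+1}}\Big(1+\frac{1-r}{n}\Big)Z_n^*.
\]
Substituting the recurrence for $a_{n+1}$ cancels the factor $1+(1-r)/n$ and leaves $\mathbb{E}(\mathcal{M}_{n+1}\mid\mathcal{G}_n)=Z_n^*/a_n=\mathcal{M}_n$, as desired.

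I do not anticipate a genuine obstacle: the lemma is an immediate telescoping consequence of \eqref{n+1_star}, and it is exactly the specialization of the general normalization recipe of Lemma 2.1 of Gut and Stadtm\"uller (2021b) to the factors $1+(1-r)/k$. The only point demanding care is the bookkeeping of the index shift in the product defining $a_n$, namely confirming that the ratio $a_{n+1}/a_n$ matches precisely the multiplicative factor appearing in \eqref{n+1_star} and that the boundary value $a_1=1$ makes the recurrence valid already at $n=1$.
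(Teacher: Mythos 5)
Your proposal is correct and follows exactly the route the paper intends: the paper derives the lemma directly from the conditional mean increment \eqref{n+1_star} together with Lemma 2.1 of Gut and Stadtm\"uller (2021b), which is precisely the telescoping cancellation $a_{n+1}=a_n\big(1+\frac{1-r}{n}\big)$ that you carry out explicitly. Your added verifications of adaptedness and integrability (via $0\le Z_n^*\le n$) are routine and consistent with the paper's setup.
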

Bercu (2018) referred the process $\{\mathcal{M}_n,\mathcal{G}_n\}_{n\ge 1}$ as   multiplicative martingale.
By using the Stirling's approximation for  gamma function (see Kilbas \textit{et al.} (2006), p. 25), we get
\begin{equation}\label{asym_a_n}
	a_n\sim \frac{n^{1-r}}{\Gamma(2-r)} \ \text{as} \ n\to\infty.
\end{equation}

To obtain the LLN and LIL-type results for $Z_n^*$,
we consider the following martingale differences:
\begin{equation}\label{M_nDifference1}
	\Delta \mathcal{M}_n= \mathcal{M}_n-\mathcal{M}_{n-1},\,\, n\ge 1,
\end{equation}
with $\mathcal{M}_0=0$. Equivalently, from Lemma \ref{lemmultmar}, we have
\begin{equation}\label{delta_Mn1}
	\Delta \mathcal{M}_n=\frac{\delta_n}{a_n},
\end{equation}
where $\delta_1=1$ and $\delta_n=Z_n^*-\big(1+\frac{1-r}{n-1}\big)Z_{n-1}^*$, $n\ge2$. 

From \eqref{M_nDifference1} and \eqref{delta_Mn1}, we have
\begin{equation}\label{additive1}
	\mathcal{M}_n=\sum_{k=1}^{n}\frac{\delta_k}{a_k}.
\end{equation}

Now, by using \eqref{M_nDifference1} and \eqref{delta_Mn1}, we obtain
\begin{align}	
	\mathbb{E}(\delta_{n+1}^2|\mathcal{G}_n)&=a_{n+1}^2\big(\mathbb{E}(\mathcal{M}_{n+1}^2|\mathcal{G}_n)-\mathcal{M}_n^2\big)\nonumber\\
	&=\mathbb{E}(Z_{n+1}^{*2}|\mathcal{G}_n)-\Big(1+\frac{1-r}{n}\Big)^2 Z_n^{*2}\nonumber\\
	&= Z_n^{*2}+2Z_n^*\mathbb{E}(\parallel X_{n+1}\parallel^2|\mathcal{G}_n)+\mathbb{E}(\parallel X_{n+1}\parallel^4|\mathcal{G}_n)-\Big(1+\frac{1-r}{n}\Big)^2 Z_n^{*2}\nonumber\\
	&\le Z_n^{*2}+\frac{2(1-r)}{n}Z_n^{*2}+1-\Big(1+\frac{1-r}{n}\Big)^2 Z_n^{*2},\ \text{(using \eqref{norm_n+1})}\nonumber\\
	&=1-\Big(\frac{1-r}{n}Z_n^*\Big)^2 \nonumber\\
	& \le 1\label{delta^2}.
\end{align}
Also, from \eqref{delta_Mn1}, we have
\begin{equation}\label{delta_n+1=0}
	\mathbb{E}(\delta_{n+1}|\mathcal{G}_n)=0
\end{equation}
and from \eqref{delta^2}, it follows that
\begin{equation}\label{suple1}
	\sup_{n\ge1} \mathbb{E}(\delta_{n+1}^2|\mathcal{G}_n)\le 1.
\end{equation}

Let us fix the following notations:
\begin{equation}\label{def_Phi_sn}
	\Phi_{k-1}=\frac{1}{a_k} \ \ \text{and} \ \ u_n=\sum_{k=0}^{n}\Phi_k^2.
\end{equation}
Thus, from \eqref{a_k}, we have
\begin{equation*}
	u_{n-1}=\sum_{k=1}^{n}1/a_k^2=\sum_{k=1}^{n}\bigg(\frac{\Gamma(2-r)\Gamma(k)}{\Gamma(k+1-r)}\bigg)^2.
\end{equation*}

The proof of the next result follows on using the Stirling's formula for gamma function (see Kilbas \textit{et al.} (2006), p. 25).
\begin{proposition}\label{prplimres}
Let $\{u_n\}_{n\ge 0}$ be as defined in \eqref{def_Phi_sn}. Then, we have\vspace{.1cm}\\
\noindent{(i)} $\lim_{n\to\infty}\frac{u_{n-1}}{n^{2r-1}}=\frac{(\Gamma(2-r))^2}{2r-1}$, $1/2<r\le 1$,\vspace{.2cm}\\ 
\noindent{(ii)} $\lim_{n\to\infty}\frac{u_{n-1}}{\log n}=\frac{\pi}{4}$,  $r=1/2$ and\vspace{.2cm} \\
\noindent{(iii)} $\lim_{n\to\infty}u_{n-1}= {_3F_2(1,1,1;2-r,2-r;1)}$, $0\le r<1/2$,\vspace{.1cm}\\
where ${_3F_2(\cdot)}$ is the generalized hypergeometric function defined in \eqref{genhyper}.
\end{proposition}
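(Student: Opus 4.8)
The plan is to reduce all three parts to the asymptotics of a single ratio of gamma functions and then distinguish convergent from divergent series according to whether an exponent crosses $-1$. Writing $c=(\Gamma(2-r))^2$, I would first record from the definition in \eqref{def_Phi_sn} that
$$u_{n-1}=c\sum_{k=1}^{n}\left(\frac{\Gamma(k)}{\Gamma(k+1-r)}\right)^{2},$$
and apply Stirling's formula for the gamma function to obtain $\Gamma(k)/\Gamma(k+1-r)\sim k^{r-1}$ as $k\to\infty$, so that the $k$-th summand behaves like $c\,k^{2r-2}$. This single asymptotic drives everything: the exponent $2r-2$ exceeds, equals, or falls below the critical value $-1$ exactly according as $r>1/2$, $r=1/2$, or $r<1/2$.

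For part (i), where $1/2<r\le 1$ gives $2r-2>-1$, the series $\sum_k k^{2r-2}$ diverges, and I would invoke the Stolz--Ces\`aro theorem (equivalently, comparison with $\int^{n} t^{2r-2}\,\mathrm{d}t$) to get $\sum_{k=1}^{n}k^{2r-2}\sim n^{2r-1}/(2r-1)$; multiplying by $c$ yields the stated limit $(\Gamma(2-r))^{2}/(2r-1)$. The endpoint $r=1$ is consistent, since there $a_{k}\equiv 1$ and $u_{n-1}=n$. For part (ii), $r=1/2$ makes the summand asymptotic to $(\Gamma(3/2))^{2}k^{-1}=(\pi/4)k^{-1}$, a harmonic tail, so $u_{n-1}\sim(\pi/4)\log n$; once more Stolz--Ces\`aro, now with $\log n$ in the denominator, pins down the exact constant $\pi/4$.

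For part (iii), $0\le r<1/2$ gives $2r-2<-1$, so the series converges and $u_{n-1}$ tends to the full sum $\sum_{k=1}^{\infty}(\Gamma(2-r)\Gamma(k)/\Gamma(k+1-r))^{2}$. The remaining task is to identify this limit with the hypergeometric value. Reindexing by $k=m+1$ and using $(1)_{m}=m!=\Gamma(m+1)=\Gamma(k)$ together with $(2-r)_{m}=\Gamma(2-r+m)/\Gamma(2-r)=\Gamma(k+1-r)/\Gamma(2-r)$, the $m$-th term equals $(1)_{m}^{3}/\big((2-r)_{m}^{2}\,m!\big)$, which is precisely the $m$-th term of ${_3F_2}(1,1,1;2-r,2-r;1)$ evaluated through \eqref{GenHyp}. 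Absolute convergence of this ${_3F_2}$ at $z=1$ is furnished by the criterion recalled in Section~\ref{genhyper}, since $\mathfrak{R}\big(\sum b_j-\sum a_i\big)=(4-2r)-3=1-2r>0$ exactly in this regime.

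I do not expect a genuine obstacle here, consistent with the remark preceding the statement that the result follows from Stirling's formula: the only points needing care are the bookkeeping of the multiplicative constants in the two divergent cases (checking the monotone-divergence hypothesis of Stolz--Ces\`aro) and the clean reindexing in part (iii), where one must also verify that the convergence range of the ${_3F_2}$ matches the assumed range $0\le r<1/2$.
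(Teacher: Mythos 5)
Your proof is correct and follows essentially the same route the paper intends: its proof is precisely the one-line appeal to Stirling's formula that you flesh out, namely $\Gamma(k)/\Gamma(k+1-r)\sim k^{r-1}$, comparison of $\sum_k k^{2r-2}$ in the three regimes $2r-2\gtrless -1$, and the Pochhammer reindexing $(1)_m=m!$, $(2-r)_m=\Gamma(m+2-r)/\Gamma(2-r)$ identifying the convergent case with ${_3F_2}(1,1,1;2-r,2-r;1)$, whose convergence criterion $1-2r>0$ matches the regime. All constants check out (including $(\Gamma(3/2))^2=\pi/4$ and the endpoint $r=1$), so nothing further is needed.
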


\subsection{Law of large numbers}
Here, we state and prove some LLN-type results for $Z_n^*$.
\begin{theorem}\label{lln_res1}
	Let $1/2<r \le 1$. Then, $\lim_{n\to\infty}Z_n^*/n^r=0\ \text{a.s.}$
\end{theorem}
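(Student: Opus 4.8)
The plan is to identify $Z_n^*/n^r$ with a suitably normalized version of the multiplicative martingale $\mathcal{M}_n = Z_n^*/a_n$ and then invoke the law of large numbers for martingales, Theorem~\ref{thm_LLN2}, using the martingale differences $\delta_n$ already introduced in \eqref{delta_Mn1}. Recall from \eqref{additive1} that $\mathcal{M}_n = \sum_{k=1}^n \delta_k/a_k = \sum_{k=1}^n \Phi_{k-1}\delta_k$, where $\Phi_{k-1}=1/a_k$ as fixed in \eqref{def_Phi_sn}. The differences $\delta_k$ satisfy the two hypotheses of Theorem~\ref{thm_LLN2}: the martingale-difference property $\mathbb{E}(\delta_{n+1}|\mathcal{G}_n)=0$ from \eqref{delta_n+1=0}, and the uniform conditional second-moment bound $\sup_{n\ge1}\mathbb{E}(\delta_{n+1}^2|\mathcal{G}_n)\le 1$ from \eqref{suple1}. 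Thus $\{\mathcal{M}_n\}$ is exactly a martingale transform of the form covered by that theorem, with $s_n = u_n$ in the notation of \eqref{def_Phi_sn}.

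First I would settle the asymptotics of $u_{n-1}$ in the regime $1/2<r\le 1$: by Proposition~\ref{prplimres}(i), $u_{n-1}\sim \frac{(\Gamma(2-r))^2}{2r-1}\,n^{2r-1}$, which diverges to $\infty$ since $2r-1>0$. Hence we are on the event $\{s_\infty=\infty\}$ and part~(ii) of Theorem~\ref{thm_LLN2} applies, giving
\begin{equation*}
	\lim_{n\to\infty}\frac{\mathcal{M}_n}{u_{n-1}}=0 \ \text{a.s.}
\end{equation*}
Next I would translate this back to $Z_n^*$. Writing $Z_n^* = a_n \mathcal{M}_n$ and using the asymptotics $a_n\sim n^{1-r}/\Gamma(2-r)$ from \eqref{asym_a_n} together with $u_{n-1}\sim C\,n^{2r-1}$, one computes
\begin{equation*}
	\frac{Z_n^*}{n^r}=\frac{a_n \mathcal{M}_n}{n^r}=\frac{a_n\, u_{n-1}}{n^r}\cdot\frac{\mathcal{M}_n}{u_{n-1}},
\end{equation*}
and the deterministic prefactor $a_n u_{n-1}/n^r$ behaves like $\frac{1}{\Gamma(2-r)}\cdot\frac{(\Gamma(2-r))^2}{2r-1}\cdot n^{(1-r)+(2r-1)-r}=\frac{\Gamma(2-r)}{2r-1}$, i.e.\ it converges to a finite constant. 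Since the second factor tends to $0$ almost surely, the product tends to $0$, which is the claim.

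The main obstacle I anticipate is purely bookkeeping rather than conceptual: one must verify that the normalizing exponents line up so that the deterministic factor $a_n u_{n-1}/n^r$ stays bounded (here it in fact converges), which is exactly why the statement is phrased with the threshold $n^r$ and why the hypothesis $r>1/2$ is essential — it guarantees both the divergence $u_\infty=\infty$ needed to enter case~(ii) of Theorem~\ref{thm_LLN2} and the correct power matching. Care is also needed at the boundary $r=1$, where $a_n$ grows only logarithmically-adjusted-polynomially; but since $a_n\sim n^{1-r}$ remains valid and the exponent arithmetic is continuous in $r$, the same argument goes through. No delicate estimate beyond the cited propositions is required, so the proof is essentially an assembly of \eqref{additive1}, \eqref{delta_n+1=0}, \eqref{suple1}, \eqref{asym_a_n}, Proposition~\ref{prplimres}(i), and Theorem~\ref{thm_LLN2}(ii).
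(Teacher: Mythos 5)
Your proposal is correct and takes essentially the same route as the paper: both verify the hypotheses of Theorem~\ref{thm_LLN2} via \eqref{delta_n+1=0} and \eqref{suple1}, use Proposition~\ref{prplimres}(i) to get $u_\infty=\infty$ and conclude $\mathcal{M}_n/u_{n-1}\to 0$ a.s., and then perform exactly the same deterministic prefactor computation, since your limit $a_n u_{n-1}/n^r \to \Gamma(2-r)/(2r-1)$ is just the reciprocal of the paper's $1/(a_n u_{n-1})\sim (2r-1)/(\Gamma(2-r)n^r)$. One cosmetic slip: at $r=1$ the sequence $a_n$ is identically $1$ (not ``logarithmically-adjusted-polynomial''), but since \eqref{asym_a_n} still holds there, your argument is unaffected.
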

\begin{proof}
	By using \eqref{delta_n+1=0} and \eqref{suple1}, the first two conditions of Theorem \ref{thm_LLN2} are satisfied. Also, from Proposition \ref{prplimres}(i), we have $u_\infty=\infty$.
	By using \eqref{additive1} and \eqref{def_Phi_sn}, and applying Theorem \ref{thm_LLN2}, we get
	\begin{equation}\label{Mn_sn1}
		\lim_{n\to\infty}\frac{\mathcal{M}_n}{u_{n-1}}=0 \ \text{a.s.},
	\end{equation}
	where $\mathcal{M}_n=Z_n^*/a_n$. From \eqref{asym_a_n} and Proposition \ref{prplimres}(i), we have 
	\begin{equation*}
		\frac{1}{a_n u_{n-1}}\sim \frac{2r-1}{\Gamma(2-r)n^r} \ \text{as} \ n\to\infty.
	\end{equation*}
	Finally, from \eqref{Mn_sn1}, it follows that
	\begin{equation*}
		\lim_{n\to\infty}\frac{Z_n^*}{n^r}\frac{2r-1}{\Gamma(2-r)}=0 \ \text{a.s.},
	\end{equation*}
	which reduces to the required result.	
\end{proof}
\begin{theorem}\label{lln_res2}
	Let $r=1/2$. Then, $\lim_{n\to\infty}\frac{Z_n^*}{\sqrt{n} \log n}=0 \ \text{a.s.}$
\end{theorem}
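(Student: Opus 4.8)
The plan is to follow the argument used for Theorem \ref{lln_res1}, replacing the asymptotics valid in the regime $1/2 < r \le 1$ with those at the boundary value $r = 1/2$. First I would observe that the martingale difference identities \eqref{delta_n+1=0} and \eqref{suple1} hold for every value of $r$, so that conditions (i) and (ii) of Theorem \ref{thm_LLN2} are met with $\Phi_{k-1} = 1/a_k$ as in \eqref{def_Phi_sn} and $\epsilon_k = \delta_k$. Next, invoking Proposition \ref{prplimres}(ii), I would record that $u_{n-1} \sim (\pi/4)\log n$, which in particular yields $u_\infty = \infty$. Applying Theorem \ref{thm_LLN2} then gives, exactly as in \eqref{Mn_sn1},
\begin{equation*}
	\lim_{n\to\infty}\frac{\mathcal{M}_n}{u_{n-1}} = 0 \ \text{a.s.},
\end{equation*}
where $\mathcal{M}_n = Z_n^*/a_n$.

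It then remains to translate this statement back to $Z_n^*$. Since $\mathcal{M}_n = Z_n^*/a_n$, one has $\mathcal{M}_n/u_{n-1} = Z_n^*/(a_n u_{n-1})$, so I would simply combine the two relevant asymptotics. From \eqref{asym_a_n} with $r = 1/2$ one has $a_n \sim \sqrt{n}/\Gamma(3/2)$, and together with Proposition \ref{prplimres}(ii) this produces
\begin{equation*}
	\frac{1}{a_n u_{n-1}} \sim \frac{4\,\Gamma(3/2)}{\pi\,\sqrt{n}\,\log n} \ \text{as} \ n \to \infty.
\end{equation*}
Because the multiplicative constant is finite and nonzero, the almost sure limit $\mathcal{M}_n/u_{n-1} \to 0$ forces $\lim_{n\to\infty} Z_n^*/(\sqrt{n}\,\log n) = 0$ a.s., which is the desired conclusion.

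I do not anticipate a genuine obstacle here; the statement is essentially the boundary-case analogue of Theorem \ref{lln_res1}, and the only point requiring care is the bookkeeping of the Stirling/gamma constants when forming the product $a_n u_{n-1}$. The substantive content — the law-of-large-numbers estimate for the martingale transform and the logarithmic growth rate of $u_{n-1}$ at $r = 1/2$ — has already been furnished by Theorem \ref{thm_LLN2} and Proposition \ref{prplimres}(ii), respectively, so the proof reduces to assembling these ingredients.
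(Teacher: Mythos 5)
Your proposal is correct and takes essentially the same route as the paper's own proof: verify conditions (i)--(ii) of Theorem \ref{thm_LLN2} via \eqref{delta_n+1=0} and \eqref{suple1}, use Proposition \ref{prplimres}(ii) to get $u_\infty=\infty$ and $u_{n-1}\sim(\pi/4)\log n$, conclude $\mathcal{M}_n/u_{n-1}\to 0$ a.s., and translate back through $a_n u_{n-1}$. Your constant also checks out, since $4\Gamma(3/2)/\pi = 2/\sqrt{\pi}$, which agrees with the paper's asymptotic $1/(a_n u_{n-1})\sim 2/(\sqrt{n\pi}\log n)$.
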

\begin{proof}
	By using \eqref{delta_n+1=0} and \eqref{suple1}, the first two conditions of Theorem \ref{thm_LLN2} are satisfied. From Proposition \ref{prplimres}(ii), we have $u_\infty=\infty$.
	By using \eqref{additive1}, \eqref{def_Phi_sn} and Theorem \ref{thm_LLN2}, we get
	\begin{equation}\label{Mn_sn2}
		\lim_{n\to\infty}\frac{\mathcal{M}_n}{u_{n-1}}=0 \ \text{a.s.}
	\end{equation}
	Also, by using \eqref{asym_a_n} and Proposition \ref{prplimres}(ii), we obtain
	\begin{equation*}
		\frac{1}{a_n u_{n-1}}\sim \frac{2}{ \sqrt{n\pi}\log n} \ \text{as} \ n\to\infty.
	\end{equation*}
	Thus, 
	\begin{equation*}
		\lim_{n\to\infty}\frac{Z_n^*}{\sqrt{n}\log n}=0 \ \text{a.s.},
	\end{equation*}
	which follows from \eqref{Mn_sn2}.	This completes the proof.	
\end{proof}

The following lemma (see Bercu (2022)) will be used to prove the next result:
\begin{lemma}\label{Lm bdd}
	Let $0\le r\le1$ and $m\ge 1$ be an integer. Then, the martingale $\{\mathcal{M}_n,\mathcal{G}_n\}_{n\ge 1}$ is $\mathbb{L}^m$-bounded, that is, $\sup_{n\ge 1} \mathbb{E}(\mathcal{M}_n^m) < \infty$, where $\mathbb{L}^m$ is defined in Section \ref{Lmspac}.
	Thus, $\{\mathcal{M}_n\}_{n\ge 1}$ converges a.s. as well as in $\mathbb{L}^m$ to a finite random variable $\mathcal{M}$ which satisfies
	\begin{equation*}
		\mathbb{E}(\mathcal{M}^m)=\frac{m! (\Gamma(2-r))^m}{\Gamma(m-mr+1)}.
	\end{equation*}
\end{lemma}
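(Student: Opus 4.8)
The plan is to exploit the crucial structural fact that the increments of $Z_n^*$ are Bernoulli. Indeed, by \eqref{ones} we have $Z_{n+1}^*=Z_n^*+\|X_{n+1}\|^2$ with $\|X_{n+1}\|^2\in\{0,1\}$, so that $\|X_{n+1}\|^{2j}=\|X_{n+1}\|^2$ for every integer $j\ge1$. Consequently, conditionally on $\mathcal{G}_n$, the variable $Z_{n+1}^*$ equals $Z_n^*$ or $Z_n^*+1$, and by \eqref{norm_n+1} it takes the latter value with conditional probability $\frac{1-r}{n}Z_n^*$. Working with the Pochhammer symbol $(x)_m=x(x+1)\cdots(x+m-1)$ introduced in Section \ref{genhyper} rather than with ordinary powers, I would first establish the clean multiplicative recursion
\[\mathbb{E}\big((Z_{n+1}^*)_m\mid\mathcal{G}_n\big)=\Big(1+\frac{m(1-r)}{n}\Big)(Z_n^*)_m,\qquad n\ge1.\]
This follows because $(x+1)_m-(x)_m=m\,(x+1)(x+2)\cdots(x+m-1)$ and hence $x\big[(x+1)_m-(x)_m\big]=m\,(x)_m$; the Bernoulli structure is exactly what makes the factorial moment, unlike the ordinary moment, close on itself.

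Next I would solve this recursion. Since the first step is always a genuine move, $Z_1^*=1$, whence $(Z_1^*)_m=m!$, and iterating gives the exact factorial moment
\[\mathbb{E}\big((Z_n^*)_m\big)=m!\prod_{k=1}^{n-1}\Big(1+\frac{m(1-r)}{k}\Big)=\frac{m!\,\Gamma\big(n+m(1-r)\big)}{\Gamma\big(1+m(1-r)\big)\,\Gamma(n)}.\]
This product is structurally identical to the one defining $a_n$ in \eqref{a_k}, with $1-r$ replaced by $m(1-r)$, so the same Stirling estimate used for \eqref{asym_a_n} yields $\mathbb{E}\big((Z_n^*)_m\big)\sim \frac{m!}{\Gamma(1+m(1-r))}\,n^{m(1-r)}$ as $n\to\infty$.

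The remaining analytic step is to transfer this asymptotics from factorial to ordinary moments. Expanding $(Z_n^*)_m=\sum_{j=1}^{m}c_{m,j}\,Z_n^{*j}$ in unsigned Stirling numbers of the first kind $c_{m,j}\ge0$ (with $c_{m,m}=1$), I would argue by induction on $m$ that $\mathbb{E}(Z_n^{*j})=O(n^{j(1-r)})$ for every $j$; since $j(1-r)<m(1-r)$ whenever $j<m$ and $r<1$, the subleading terms are negligible and $\mathbb{E}(Z_n^{*m})\sim\mathbb{E}\big((Z_n^*)_m\big)\sim\frac{m!}{\Gamma(1+m(1-r))}\,n^{m(1-r)}$. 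Dividing by $a_n^m\sim n^{m(1-r)}/(\Gamma(2-r))^m$ from \eqref{asym_a_n} then gives $\lim_{n\to\infty}\mathbb{E}(\mathcal{M}_n^m)=\frac{m!\,(\Gamma(2-r))^m}{\Gamma(m-mr+1)}$, using $1+m(1-r)=m-mr+1$. In particular the sequence $\{\mathbb{E}(\mathcal{M}_n^m)\}_{n\ge1}$ converges, hence is bounded, which is the asserted $\mathbb{L}^m$-boundedness. (The boundary case $r=1$ is degenerate: then $Z_n^*\equiv1$ and $\mathcal{M}_n\equiv1$, so boundedness is trivial.)

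Finally, $\{\mathcal{M}_n\}$ is a nonnegative martingale by Lemma \ref{lemmultmar}, so it converges a.s.\ to a finite limit $\mathcal{M}$. Applying the $\mathbb{L}^m$-boundedness just proved with $2m$ in place of $m$ shows that $\{\mathcal{M}_n^m\}$ is bounded in $\mathbb{L}^2$, hence uniformly integrable; combined with the a.s.\ convergence this upgrades to convergence in $\mathbb{L}^m$ and legitimizes passing to the limit under the expectation, giving $\mathbb{E}(\mathcal{M}^m)=\lim_{n\to\infty}\mathbb{E}(\mathcal{M}_n^m)=\frac{m!\,(\Gamma(2-r))^m}{\Gamma(m-mr+1)}$. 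I expect the main obstacle to be the bookkeeping in the factorial-to-ordinary moment passage, i.e.\ verifying that the Stirling-number corrections are genuinely of lower order for $r<1$; the recursion itself and the Stirling asymptotics are routine once the Bernoulli identity $\|X_{n+1}\|^{2j}=\|X_{n+1}\|^2$ is exploited.
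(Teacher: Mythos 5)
Your proof is correct, and it is genuinely different from what the paper does: the paper gives no argument at all for Lemma \ref{Lm bdd}, simply importing it from Bercu (2022), where the corresponding statement is proved for the one-dimensional ERW with stops via the connection with the Mittag-Leffler distribution. The citation route works here because the number of moves $Z_n^*$ in the multidimensional model has exactly the same conditional dynamics \eqref{norm_n+1}--\eqref{n+1_star} as its one-dimensional counterpart, but the paper never makes that identification explicit; your argument bypasses the borrowing entirely and is self-contained. Its engine is sound: since $\|X_{n+1}\|^2\in\{0,1\}$, the conditional expectation \eqref{norm_n+1} determines the full conditional law of the increment, and the rising-factorial identity $x\bigl[(x+1)_m-(x)_m\bigr]=m\,(x)_m$ does make the $m$-th factorial moment close on itself, giving the exact formula $\mathbb{E}\bigl((Z_n^*)_m\bigr)=m!\,\Gamma(n+m(1-r))/\bigl(\Gamma(1+m(1-r))\Gamma(n)\bigr)$ (the base case $(Z_1^*)_m=m!$ is right because the first step is always a move). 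The transfer to ordinary moments via unsigned Stirling numbers and induction is valid for $r<1$, since the corrections are $O(n^{(m-1)(1-r)})=o(n^{m(1-r)})$, and the uniform-integrability upgrade obtained by invoking your own bound with $2m$ in place of $m$ is the standard device and legitimately yields both the $\mathbb{L}^m$ convergence and the passage to the limit in $\mathbb{E}(\mathcal{M}_n^m)$. What each approach buys: the paper's citation is short and ties $\mathcal{M}$ to the Mittag-Leffler law; your computation re-derives those moments from scratch (indeed it identifies $\mathcal{M}$ as $\Gamma(2-r)$ times a Mittag-Leffler$(1-r)$ variable) at the cost of the Stirling-number bookkeeping, which you handle correctly.

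One caveat worth recording: your degenerate-case remark implicitly exposes an error in the statement as transcribed. At $r=1$ one has $Z_n^*\equiv 1$ and $a_n\equiv 1$, so $\mathcal{M}\equiv 1$ and $\mathbb{E}(\mathcal{M}^m)=1$, whereas the displayed formula gives $m!$; thus the moment identity requires $0\le r<1$ (only the $\mathbb{L}^m$-boundedness survives at $r=1$, as you note). This is a defect of the lemma's stated range $0\le r\le 1$, not of your argument, and it is harmless downstream since the lemma is only used in Theorem \ref{lln_res3}, where $0\le r<1/2$.
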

\begin{theorem}\label{lln_res3}
	For $0\le r<1/2$, we have
	\begin{equation}\label{conv to Z}
		\lim_{n\to\infty}Z_n^*/n^{1-r}=Z\ \text{a.s.},
	\end{equation}
	where $Z$ is a finite random variable. Moreover, this convergence holds in $\mathbb{L}^m$ for any integer $m\ge 1$, that is,
	\begin{equation}\label{Lm conv}
		\lim_{n\to\infty}\mathbb{E}\bigg(\bigg |\frac{Z_n^*}{n^{1-r}}-Z\bigg |^m \bigg)=0  \ \text{a.s.}
	\end{equation}
\end{theorem}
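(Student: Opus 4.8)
The plan is to obtain both assertions from Lemma \ref{Lm bdd}, which already supplies the almost sure and $\mathbb{L}^m$ convergence of the normalized martingale $\mathcal{M}_n = Z_n^*/a_n$. Since the hypothesis $0 \le r < 1/2$ lies inside the range $0 \le r \le 1$ covered by that lemma, I obtain a finite random variable $\mathcal{M}$ with $\mathcal{M}_n \to \mathcal{M}$ almost surely and in $\mathbb{L}^m$ for every integer $m \ge 1$. The whole argument then amounts to transferring this convergence from the $a_n$-normalization to the $n^{1-r}$-normalization, using the purely deterministic asymptotic \eqref{asym_a_n}.

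First I would prove the almost sure statement \eqref{conv to Z}. Writing
\[
	\frac{Z_n^*}{n^{1-r}} = \frac{Z_n^*}{a_n}\cdot\frac{a_n}{n^{1-r}} = \mathcal{M}_n\cdot\frac{a_n}{n^{1-r}},
\]
the factor $a_n/n^{1-r}$ converges to $1/\Gamma(2-r)$ by \eqref{asym_a_n}. Hence the product converges almost surely, and I set $Z := \mathcal{M}/\Gamma(2-r)$, a finite random variable, which yields \eqref{conv to Z}.

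For the $\mathbb{L}^m$ statement \eqref{Lm conv}, write $\|X\|_m := (\mathbb{E}(|X|^m))^{1/m}$ and put $c_n := a_n/n^{1-r}$ and $c := 1/\Gamma(2-r)$. I would decompose
\[
	\frac{Z_n^*}{n^{1-r}} - Z = c_n(\mathcal{M}_n - \mathcal{M}) + (c_n - c)\mathcal{M}
\]
and apply Minkowski's inequality to get
\[
	\Big\|\frac{Z_n^*}{n^{1-r}} - Z\Big\|_m \le |c_n|\,\|\mathcal{M}_n - \mathcal{M}\|_m + |c_n - c|\,\|\mathcal{M}\|_m .
\]
The sequence $c_n$ is bounded because it converges to $c$; the first norm tends to $0$ by the $\mathbb{L}^m$ convergence in Lemma \ref{Lm bdd}; and $\|\mathcal{M}\|_m$ is finite by the same lemma while $|c_n - c| \to 0$. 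Consequently both terms vanish as $n \to \infty$, establishing \eqref{Lm conv}.

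There is no serious obstacle in this argument: Lemma \ref{Lm bdd} carries all of the analytic weight, and the only point requiring care is to separate the genuinely random convergence $\mathcal{M}_n \to \mathcal{M}$ from the deterministic normalizing factor $a_n/n^{1-r}$, which the Minkowski decomposition above handles cleanly. I would also remark that the trailing \enquote{a.s.} appearing in \eqref{Lm conv} seems to be a typographical slip, since the quantity whose limit is asserted there is already deterministic.
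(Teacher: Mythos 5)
Your proof is correct, but it takes a different route from the paper for the almost sure part. The paper does not invoke Lemma \ref{Lm bdd} for \eqref{conv to Z}; instead it gives a self-contained martingale argument: it bounds the predictable square variation by $\langle \mathcal{M}\rangle_n \le u_{n-1}$ using $\mathbb{E}(\delta_k^2|\mathcal{G}_{k-1})\le 1$ from \eqref{delta^2}, notes via Proposition \ref{prplimres}(iii) that $u_n$ converges to a finite limit precisely when $0\le r<1/2$, and then applies Theorem \ref{thm_LLN1}(i) to get $\mathcal{M}_n\to\mathcal{M}_\infty$ a.s.; Lemma \ref{Lm bdd} is used only at the very end, for the $\mathbb{L}^m$ statement. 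Your version leans on Lemma \ref{Lm bdd} for both conclusions, which is legitimate given that lemma's stated range $0\le r\le 1$, and it has two advantages: your Minkowski decomposition $Z_n^*/n^{1-r}-Z=c_n(\mathcal{M}_n-\mathcal{M})+(c_n-c)\mathcal{M}$ makes explicit the transfer step that the paper compresses into ``then follows as a consequence of Lemma \ref{Lm bdd}'', and your argument would in fact prove the theorem for all $0\le r<1$, consistent with the Remark following it that cites Bercu (2025) for exactly this extension. What the paper's route buys instead is an explanation of where the hypothesis $r<1/2$ comes from (it is exactly the regime in which the square-variation series converges) and independence of the externally cited moment bounds for the a.s. claim. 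Your closing observation that the trailing ``a.s.'' in \eqref{Lm conv} is a typographical slip is also correct, since the quantity there is deterministic.
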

\begin{proof}
	The predictable square variation of $\{\mathcal{M}_n\}_{n\ge 1}$ is given by (see Section \ref{predictable})
	\begin{equation*}
		\langle \mathcal{M}\rangle_n
		=\sum_{k=1}^{n}\mathbb{E}((\Delta \mathcal{M}_k)^2|\mathcal{G}_{k-1}),
	\end{equation*}
	where $\Delta \mathcal{M}_k$ is defined in \eqref{M_nDifference1}.\\
	As $\mathcal{M}_k=Z_k^*/a_k$, by using \eqref{delta_Mn1}, we have
	\begin{equation}\label{delMn_1st}
		\langle \mathcal{M}\rangle_n =\sum_{k=1}^{n}\frac{1}{a_k^2}\mathbb{E}(\delta_k^2|\mathcal{G}_{k-1}).
	\end{equation}
	Also, by using \eqref{delta^2} and  \eqref{def_Phi_sn} in \eqref{delMn_1st}, we get $\langle \mathcal{M}\rangle_n \le u_{n-1}$. From Proposition \ref{prplimres}(iii), it follows that $\{u_n\}_{n\ge 0}$ converges to a finite limit. Also, $\{\langle \mathcal{M}\rangle_n\}_{n\ge 1}$ is an increasing sequence which is bounded above by the convergent sequence $\{u_n\}_{n\ge 0}$. Thus, $\{\langle \mathcal{M}\rangle_n\}_{n\ge 1}$ is also convergent. By using Theorem \ref{thm_LLN1}, we have 
	\begin{equation}\label{M_inf}
		\lim_{n\to\infty}\mathcal{M}_n=\mathcal{M}_\infty\  \text{a.s.},
	\end{equation}
	where $\mathcal{M}_\infty=\sum_{k=1}^{\infty}\delta_k/a_k$, which is a finite random variable. Moreover, the convergence in \eqref{conv to Z} follows from \eqref{asym_a_n} and \eqref{M_inf}. The convergence in \eqref{Lm conv} then follows as a consequence of Lemma \ref{Lm bdd}. This completes the proof.
	\end{proof}
\begin{remark} 
For an alternate proof of Theorem \ref{lln_res3}, we refer the reader to Lemma 2.1 of Bercu (2025). We note that Lemma 2.1 of Bercu (2025) generalizes Theorem \ref{lln_res3} for interval $(0,1)$, and specifies that the random variable $Z$ has Mittag-Leffler distribution. 
\end{remark}
\begin{corollary}
From Theorem \ref{lln_res1}, Theorem \ref{lln_res2} and Theorem \ref{lln_res3}, we have the following convergence results for the number of delays $Z_n$:\vspace{.1cm}\\
\noindent{(i)} $\lim_{n\to\infty}\Big(\frac{Z_n}{n^r}- n^{1-r}\Big)=0  \ \text{a.s.}, \ 1/2<r\le 1$,\vspace{.1cm}\\
\noindent{(ii)} $\lim_{n\to\infty}\Big(\frac{Z_n}{\sqrt{n}\log n}- \frac{\sqrt{n}}{\log n}\Big)=0  \ \text{a.s.}, \ r=1/2$,\vspace{.1cm}\\
\noindent{(iii)} $\lim_{n\to\infty}\Big(\frac{Z_n}{n^{1-r}}- n^r \Big)=\tilde{Z}  \ \text{a.s.}, \ 0\le r<1/2$,\\
respectively, where $\tilde{Z}$ is a finite random variable, and the above limits follow from \eqref{0+1}.
\end{corollary}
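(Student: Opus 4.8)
The plan is to reduce all three statements to the corresponding almost sure limits for $Z_n^*$ via the single identity \eqref{0+1}, namely $Z_n = n - Z_n^*$. The idea is that in each regime the deterministic subtracted term is precisely $n$ divided by the chosen normalizing sequence, so after rearrangement the remaining quantity is exactly $-Z_n^*$ divided by that same sequence, whose limit is already known.

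First I would treat regime (i), where $1/2 < r \le 1$. Dividing \eqref{0+1} by $n^r$ gives
\begin{equation*}
	\frac{Z_n}{n^r} = \frac{n}{n^r} - \frac{Z_n^*}{n^r} = n^{1-r} - \frac{Z_n^*}{n^r},
\end{equation*}
so that $Z_n/n^r - n^{1-r} = -Z_n^*/n^r$, and Theorem \ref{lln_res1} forces the right-hand side to $0$ a.s. The case (ii) with $r = 1/2$ is identical in spirit: dividing \eqref{0+1} by $\sqrt{n}\log n$ yields $Z_n/(\sqrt{n}\log n) - \sqrt{n}/\log n = -Z_n^*/(\sqrt{n}\log n)$, which tends to $0$ a.s. by Theorem \ref{lln_res2}.

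For regime (iii) with $0 \le r < 1/2$, dividing \eqref{0+1} by $n^{1-r}$ gives
\begin{equation*}
	\frac{Z_n}{n^{1-r}} - n^r = -\frac{Z_n^*}{n^{1-r}},
\end{equation*}
and Theorem \ref{lln_res3} shows the right-hand side converges a.s.\ to $-Z$. Setting $\tilde{Z} = -Z$ completes this case; since $Z$ is a finite random variable, so is $\tilde{Z}$. There is essentially no analytic obstacle here — the result is a direct algebraic consequence of \eqref{0+1} together with the three already-established theorems — and the only point deserving explicit mention is the identification $\tilde{Z} = -Z$ in the last regime, which also confirms the finiteness of the limiting random variable.
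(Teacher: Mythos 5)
Your proposal is correct and is exactly the paper's argument: the paper offers no further proof beyond the remark that the limits follow from \eqref{0+1}, i.e., from $Z_n = n - Z_n^*$, and your rearrangements $Z_n/n^r - n^{1-r} = -Z_n^*/n^r$ (and the analogues in the other two regimes) together with Theorems \ref{lln_res1}--\ref{lln_res3} are precisely what is intended. The identification $\tilde{Z} = -Z$ in regime (iii) is the right reading of the corollary and confirms finiteness of the limit.
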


\subsection{Law of iterated logarithm}
Here, we establish some LIL-type results for the number of moves in MERW with stops.
\begin{lemma}\label{LILlem1}
	Let $\delta_n=Z_n^*-\big(1+\frac{1-r}{n-1}\big)Z_{n-1}^*$, $n\ge2$. Then, 
	$\displaystyle \sup_{n\ge1}\mathbb{E}(|\delta_{n+1}|^3|\mathcal{G}_n)<\infty$.
\end{lemma}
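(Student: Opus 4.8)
The plan is to reduce the third conditional moment to the second one, which has already been bounded in \eqref{delta^2}, by first showing that $\delta_{n+1}$ is almost surely bounded by $1$ in absolute value. The key observation is that $\|X_{n+1}\|^2$ is an indicator variable, which makes $\delta_{n+1}$ a centered Bernoulli-type increment.

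First I would recall from the construction of the walk that $Z_{n+1}^* = Z_n^* + \|X_{n+1}\|^2$, where by \eqref{ones} and the fact that $X_{n+1}$ takes values in $\{0,\pm e_1,\dots,\pm e_d\}$ we have $\|X_{n+1}\|^2\in\{0,1\}$. Substituting this into the definition $\delta_{n+1}=Z_{n+1}^*-\big(1+\frac{1-r}{n}\big)Z_n^*$ and using \eqref{norm_n+1} yields the clean representation
\begin{equation*}
	\delta_{n+1}=\|X_{n+1}\|^2-\frac{1-r}{n}Z_n^*=\|X_{n+1}\|^2-\mathbb{E}(\|X_{n+1}\|^2|\mathcal{G}_n),
\end{equation*}
so that $\delta_{n+1}$ is the centered version of the indicator $\|X_{n+1}\|^2$.

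Next I would observe that both terms in this representation lie in $[0,1]$. The term $\|X_{n+1}\|^2$ is a $\{0,1\}$-valued indicator, while $\frac{1-r}{n}Z_n^*$ is, by \eqref{norm_n+1}, the conditional mean of such an indicator and hence also lies in $[0,1]$; alternatively, one checks directly that $0\le r\le 1$ together with $Z_n^*\le n$ (which follows from \eqref{0+1} since $Z_n\ge 0$) forces $0\le\frac{1-r}{n}Z_n^*\le 1$. Consequently $\delta_{n+1}$ is a difference of two quantities in $[0,1]$, and therefore $|\delta_{n+1}|\le 1$ a.s.

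Finally, the almost-sure bound $|\delta_{n+1}|\le 1$ immediately gives $|\delta_{n+1}|^3\le\delta_{n+1}^2$, whence
\begin{equation*}
	\mathbb{E}(|\delta_{n+1}|^3|\mathcal{G}_n)\le\mathbb{E}(\delta_{n+1}^2|\mathcal{G}_n)\le 1,
\end{equation*}
using \eqref{delta^2} (equivalently \eqref{suple1}) for the last inequality; taking the supremum over $n\ge 1$ completes the argument. I do not expect any genuine obstacle here: the only substantive step is establishing the boundedness $|\delta_{n+1}|\le 1$, and once the indicator structure of $\|X_{n+1}\|^2$ is made explicit this is immediate. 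In fact, the same reasoning shows that every conditional absolute moment $\mathbb{E}(|\delta_{n+1}|^a|\mathcal{G}_n)$ with $a\ge 2$ is bounded by $1$, which is exactly the type of higher-moment control needed to invoke Theorem \ref{LIL}.
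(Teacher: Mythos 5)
Your proof is correct and follows essentially the same route as the paper: both start from the representation $\delta_{n+1}=\parallel X_{n+1}\parallel^2-\frac{1-r}{n}Z_n^*$ (via \eqref{ones} and \eqref{norm_n+1}) and conclude by an almost sure bound on $|\delta_{n+1}|$, the paper using the triangle inequality to get $|\delta_{n+1}|\le 2-r$ and hence $|\delta_{n+1}|^3\le 8$. Your only deviations are cosmetic: you obtain the marginally sharper bound $|\delta_{n+1}|\le 1$ by noting both terms lie in $[0,1]$, and your detour through \eqref{delta^2} is unnecessary since $|\delta_{n+1}|\le 1$ already gives $|\delta_{n+1}|^3\le 1$ directly.
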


\begin{proof}
	By using \eqref{ones}, we have
	\begin{equation*}
		\delta_{n+1}=Z_{n+1}^*-\Big(1+\frac{1-r}{n}\Big)Z_{n-1}^*=\parallel X_{n+1}\parallel ^2 - \frac{1-r}{n}Z_n^*.
	\end{equation*}
	That is,
	\begin{equation*}
		|\delta_{n+1}| \le \parallel X_{n+1}\parallel ^2 + \frac{1-r}{n}Z_n^* \le 2-r.
	\end{equation*}
	Hence, $|\delta_{n+1}|^3 \le 8$. This completes the proof.
\end{proof}
\begin{theorem}\label{LIL1}
	For $1/2<r\le 1$, we have 
	\begin{equation*}
		\limsup_{n\to\infty}\frac{Z_n^*}{\sqrt{2n\log\log n}} \le \frac{1}{\sqrt{2r-1}} \ \text{a.s.}
	\end{equation*}
\end{theorem}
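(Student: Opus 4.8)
The plan is to apply the martingale law of iterated logarithm, Theorem \ref{LIL}, to the multiplicative martingale $\mathcal{M}_n = Z_n^*/a_n = \sum_{k=1}^{n}\delta_k/a_k$ from \eqref{additive1}, and then to transfer the resulting bound from $\mathcal{M}_n$ to $Z_n^*$ using the sharp asymptotics already in hand. In the notation of Theorem \ref{LIL} I would set $\epsilon_k = \delta_k$, $\Phi_{k-1} = 1/a_k$ and $U_k = 1/a_{k+1} = \Phi_k$, so that $M_n = \mathcal{M}_n$ and $\tau_n = \sum_{k=0}^{n} U_k^2 = u_n$, with $u_n$ as in \eqref{def_Phi_sn}; the requirement $|\Phi_n|\le U_n$ then holds with equality. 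Condition (i) of Theorem \ref{LIL} holds with $\sigma^2 = 1$ by \eqref{delta_n+1=0} and \eqref{suple1}. For condition (ii) I would choose $\alpha = 1/2$, so that $2+2\alpha = 3$, and invoke Lemma \ref{LILlem1}, which gives precisely $\sup_{n\ge1}\mathbb{E}(|\delta_{n+1}|^3|\mathcal{G}_n) < \infty$.

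Next I would verify the two growth hypotheses. Since $1/2 < r \le 1$, Proposition \ref{prplimres}(i) gives $u_{n-1}\sim (\Gamma(2-r))^2 n^{2r-1}/(2r-1)\to\infty$, so $\tau_\infty = \infty$. For the series condition with $\alpha = 1/2$ I must check $\sum_{n} U_n^{3}\tau_n^{-3/2} < \infty$. By \eqref{asym_a_n} one has $U_n \sim \Gamma(2-r)\,n^{r-1}$, and by Proposition \ref{prplimres}(i) $\tau_n \sim (\Gamma(2-r))^2 n^{2r-1}/(2r-1)$; raising these to the relevant powers the polynomial exponents cancel, leaving $U_n^{3}\tau_n^{-3/2}$ of order $n^{-3/2}$, which is summable. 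Hence Theorem \ref{LIL} applies and yields
\[
\limsup_{n\to\infty}\frac{|\mathcal{M}_n|}{\sqrt{2u_{n-1}\log\log u_{n-1}}} \le 1 \quad \text{a.s.}
\]

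Finally I would transfer this to $Z_n^* = a_n\mathcal{M}_n > 0$ by writing
\[
\frac{Z_n^*}{\sqrt{2n\log\log n}} = \frac{\mathcal{M}_n}{\sqrt{2u_{n-1}\log\log u_{n-1}}}\, R_n, \qquad R_n := a_n\sqrt{\frac{u_{n-1}\log\log u_{n-1}}{n\log\log n}},
\]
and evaluating $\lim_n R_n$. Because $u_{n-1}$ grows polynomially with exponent $2r-1 > 0$, one has $\log u_{n-1}\sim (2r-1)\log n$ and therefore $\log\log u_{n-1}\sim\log\log n$; combined with $a_n^2\, u_{n-1}/n \to 1/(2r-1)$ (from \eqref{asym_a_n} and Proposition \ref{prplimres}(i), where again the powers of $n$ cancel), this gives $R_n \to 1/\sqrt{2r-1}$. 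Since $R_n$ converges to a positive constant and $\mathcal{M}_n \ge 0$, the $\limsup$ of the product factorizes, and the stated bound $\limsup_{n\to\infty} Z_n^*/\sqrt{2n\log\log n}\le 1/\sqrt{2r-1}$ follows.

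The place to be careful is this last transfer step: one must track the iterated-logarithm normalisation correctly — in particular justify $\log\log u_{n-1}\sim\log\log n$ — and verify the cancellation of the $n$-powers so that $R_n$ tends to exactly $1/\sqrt{2r-1}$, since any slip in these asymptotics alters the constant. The summability check is the other computation to handle with care, though there the exponent works out cleanly to $-3/2$.
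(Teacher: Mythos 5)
Your proposal is correct and takes essentially the same route as the paper's proof: both apply Theorem \ref{LIL} with $\alpha=1/2$, $U_n=\Phi_n=1/a_{n+1}$ and $\tau_n=u_n$, verify the hypotheses via \eqref{delta_n+1=0}, \eqref{suple1} (equivalently \eqref{delta^2}) and Lemma \ref{LILlem1}, check summability through $\Phi_n^3 u_n^{-3/2}\sim\big((2r-1)/(n+1)\big)^{3/2}$, and then transfer the bound on $\mathcal{M}_n$ to $Z_n^*$ using $a_n^2 u_{n-1}\sim n/(2r-1)$ and $\log\log u_{n-1}\sim\log\log n$. Your packaging of the final step into the single ratio $R_n$ is only a cosmetic variant of the paper's two-stage substitution.
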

\begin{proof}
	For $\alpha=1/2$, in view of \eqref{delta^2}, \eqref{delta_n+1=0} and Lemma \ref{LILlem1}, the first two conditions of Theorem \ref{LIL} are satisfied.
	
	Consider $U_n$ of Theorem \ref{LIL} as $\Phi_n$. Then, 
	\begin{equation*}
		\tau_n=\sum_{k=0}^{n}U_k^2=\sum_{k=0}^{n}\Phi_k^2=u_n.
	\end{equation*}
	By using Proposition \ref{prplimres}(i), we get $u_\infty=\infty$.	
	From \eqref{asym_a_n} and  Proposition \ref{prplimres}(i), we have
	\begin{align}
		\Phi_n&=\frac{1}{a_{n+1}}\sim \frac{\Gamma(2-r)}{(n+1)^{1-r}}, \label{asym1}\\
		u_n &\sim \frac{(\Gamma(2-r))^2}{2r-1}(n+1)^{2r-1}, \label{asym2}
	\end{align}
	respectively, as $n\to\infty$.
	By using \eqref{asym1} and \eqref{asym2}, we get
	\begin{equation*}
		\Phi_n^3 u_n^{-3/2}\sim \bigg(\frac{2r-1}{n+1}\bigg)^{3/2} \ \text{as} \ n\to\infty.
	\end{equation*}
	Thus, $\sum_{n=0}^{\infty}\Phi_n^3 u_n^{-3/2} < \infty \ \text{a.s.}$
	As all the conditions of Theorem \ref{LIL} are satisfied, it follows that
	\begin{equation}\label{1st}
		\limsup_{n\to\infty}\frac{|\mathcal{M}_n|}{\sqrt{2 u_{n-1}\log\log u_{n-1}}}\le 1  \ \text{a.s.}
	\end{equation}
	Recall that $\mathcal{M}_n=Z_n^*/a_n$ and $\frac{1}{a_n^2 u_{n-1}}\sim\frac{2r-1}{n}$. From \eqref{1st}, we have
	\begin{equation*}
		\limsup_{n\to\infty}\frac{Z_n^*}{\sqrt{2n\log\log u_{n-1}}}\le \frac{1}{\sqrt{2r-1}} \ \text{a.s.}
	\end{equation*}
	As $n\to\infty$, we have $\log\log u_{n-1}\sim \log\log n$, which yields the required result.		
\end{proof}
The proof of next result follows along the similar lines to that of Theorem \ref{LIL1}.
\begin{theorem}
	For $r=1/2$, we have 
	\begin{equation*}
		\limsup_{n\to\infty}\frac{Z_n^*}{\sqrt{2n\log n\log\log\log n}}\le 1  \ \text{a.s.}
	\end{equation*}
\end{theorem}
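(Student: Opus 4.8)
The plan is to mirror the argument of Theorem \ref{LIL1}, invoking the martingale law of iterated logarithm (Theorem \ref{LIL}) with the same exponent $\alpha=1/2$, but now feeding in the $r=1/2$ asymptotics from Proposition \ref{prplimres}(ii). First I would verify the hypotheses of Theorem \ref{LIL} for the martingale differences $\delta_n$: the centering condition $\mathbb{E}(\delta_{n+1}|\mathcal{G}_n)=0$ and the conditional second-moment bound $\sup_{n\ge1}\mathbb{E}(\delta_{n+1}^2|\mathcal{G}_n)\le 1$ are already available from \eqref{delta_n+1=0} and \eqref{delta^2}, while the higher-moment condition with $2+2\alpha=3$ is exactly the content of Lemma \ref{LILlem1}. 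As before, I would take $U_n=\Phi_n=1/a_{n+1}$, so that $\tau_n=\sum_{k=0}^n U_k^2=u_n$.

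Next I would record the relevant $r=1/2$ asymptotics. From \eqref{asym_a_n} we have $a_n\sim n^{1/2}/\Gamma(3/2)$, hence $\Phi_n\sim \Gamma(3/2)/(n+1)^{1/2}$, and Proposition \ref{prplimres}(ii) gives $u_n\sim (\pi/4)\log(n+1)$; in particular $u_\infty=\infty$, which supplies the requirement $\tau_\infty=\infty$. The summability condition $\sum_{n\ge0}U_n^{3}\tau_n^{-3/2}<\infty$ then reduces to checking $\sum_n \Phi_n^3 u_n^{-3/2}<\infty$, and since $\Phi_n^3 u_n^{-3/2}$ behaves like a constant multiple of $\big((n+1)^{3/2}(\log n)^{3/2}\big)^{-1}$, convergence is immediate (the polynomial factor alone forces it). With all hypotheses in place, Theorem \ref{LIL} yields
\begin{equation*}
	\limsup_{n\to\infty}\frac{|\mathcal{M}_n|}{\sqrt{2u_{n-1}\log\log u_{n-1}}}\le 1 \ \text{a.s.}
\end{equation*}

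Finally I would translate this back to $Z_n^*$ via $\mathcal{M}_n=Z_n^*/a_n$, using $Z_n^*\ge 0$ so that $|\mathcal{M}_n|=Z_n^*/a_n$. The normalization collapses correctly because $a_n^2 u_{n-1}\sim (4n/\pi)(\pi/4)\log n = n\log n$, so that $\sqrt{2a_n^2 u_{n-1}\log\log u_{n-1}}$ is asymptotically $\sqrt{2n\log n\,\log\log u_{n-1}}$. I expect the only delicate point to be the iterated-logarithm bookkeeping: one must observe that since $u_{n-1}\sim (\pi/4)\log n$ one has $\log u_{n-1}\sim \log\log n$ and therefore $\log\log u_{n-1}\sim \log\log\log n$. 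Substituting these equivalences turns the denominator $a_n\sqrt{2u_{n-1}\log\log u_{n-1}}$ into $\sqrt{2n\log n\log\log\log n}$ up to a factor tending to $1$, which gives the claimed estimate. This triple-logarithm reduction is the main obstacle, as it is the one place where the $r=1/2$ case genuinely departs from the computation in Theorem \ref{LIL1}.
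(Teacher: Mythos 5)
Your proposal is correct and follows essentially the same route as the paper's own proof: the same application of Theorem \ref{LIL} with $\alpha=1/2$, the choice $U_n=\Phi_n=1/a_{n+1}$ so that $\tau_n=u_n$, the summability check via $\Phi_n^3 u_n^{-3/2}\sim ((n+1)\log(n+1))^{-3/2}$, and the final translation using $a_n^2 u_{n-1}\sim n\log n$ together with $\log\log u_{n-1}\sim \log\log\log n$. The triple-logarithm bookkeeping you flag as the delicate point is handled identically in the paper, so there is nothing to add.
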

\begin{proof}
	For $\alpha=1/2$, the first two conditions of Theorem \ref{LIL} follow from \eqref{delta^2}, \eqref{delta_n+1=0} and Lemma \ref{LILlem1}.	
	
	From Proposition \ref{prplimres}(ii), we have $u_\infty=\infty$ and
	\begin{equation}\label{asym3}
		u_n \sim \frac{\pi}{4}\log(n+1) \ \text{as} \ n\to\infty.
	\end{equation}
	By using \eqref{asym1} in \eqref{asym3}, we obtain
	\begin{equation*}
		\Phi_n^3 u_n^{-3/2}\sim \frac{1}{((n+1)\log(n+1))^{3/2}} \ \text{as} \ n\to\infty.
	\end{equation*}
	Consequently, for all $n\ge 2$, we have
	\begin{equation*}
		\Phi_n^3 u_n^{-3/2} \le \frac{1}{(n+1)^{3/2}},
	\end{equation*}
	which gives
	\begin{equation*}
		\sum_{n=0}^{\infty}\Phi_n^3 u_n^{-3/2} < \infty \ \text{a.s.}
	\end{equation*}
As all the conditions of Theorem \ref{LIL} are satisfied, it follows that
	\begin{equation}\label{2nd}
	\limsup_{n\to\infty}\frac{|\mathcal{M}_n|}{\sqrt{2 u_{n-1}\log\log u_{n-1}}}\le 1  \ \text{a.s.}
	\end{equation}
	Recall that $\mathcal{M}_n=Z_n^*/a_n$ and  $\frac{1}{a_n^2 u_{n-1}}\sim\frac{1}{n^{2-2r}\log n}=\frac{1}{n\log n}$. From \eqref{2nd}, we get
	\begin{equation}\label{3rd}
	\limsup_{n\to\infty}\frac{Z_n^*}{\sqrt{2n\log n\log\log u_{n-1}}}\le 1 \ \text{a.s.}
	\end{equation}
Also, from Proposition \ref{prplimres}(ii), we have
	\begin{equation}\label{4th}
		\log\log u_{n-1}\sim \log\log\log n \ \ \text{as} \ n\to\infty.
	\end{equation}
	Finally, by using \eqref{4th} in \eqref{3rd}, we get the required result.	
\end{proof}

\section{MERW with random step sizes}
Zhang (2024) obtained a Gaussian approximation for the multidimensional elephant random walk with random step sizes on $\mathbb{R}^d$, $d\ge 1$, using the theory of the recursive stochastic algorithm. From this Gaussian approximation, several asymptotic results including central limit theorem, law of the iterated logarithm of multidimensional elephant random walk with random step sizes are derived. Here, we do a similar study using the martingale approach of Bercu and Laulin (2019). 

In MERW with stops, the walker is allowed to remain at rest for all except the first step. On substituting $r=0$ in \eqref{A_n+1_stops}, the MERW with stops reduces to the MERW. Henceforth, $A_{n+1}$, $n\ge 1$ take values from $\{I_d\}$ and $\{-I_d,\pm J_d,\pm J_d^2,\dots,\pm J_d^{d-1}\}$ with probability $p$ and $(1-p)/(2d-1)$, respectively. Thus, the position of the walker after $n$ steps in MERW is given by
\begin{equation}\label{W_n}
W_n=\sum_{k=1}^{n}X_k, \ n\ge 1.
\end{equation}

The MERW with random step sizes  $\{S_n\}_{n\ge 1}$ is defined as follows:
\begin{equation}\label{S_n}
	S_n\coloneqq\sum_{k=1}^{n}T_k, \ n\ge 1.
\end{equation}
In this model, for $n=1$, the walker can move in any of the $2d$ possible directions with probability $1/2d$. The first step is defined by
\begin{equation}\label{first-step}
T_1\coloneq X_1 Y_1,
\end{equation}
where the random variable $X_1$ takes values in $\{\pm e_1, \pm e_2,\dots, \pm e_d\}$ with equal probability $1/2d$. Also, the size of first step is determined by the positive random variable $Y_1$ having finite mean $\mu_1$ and finite variance $\eta_1^2$. For all $n\ge 1$, the $(n+1)$-th step is determined by
\begin{equation*}
	T_{n+1}\coloneq A_{n+1} X_{\beta(n)}Y_{n+1},
\end{equation*}
where $A_{n+1}$, $\beta(n)$ and $Y_{n+1}$ are independent, and they are independent of $\{X_1, X_2,\dots, X_n\}$ and $Y_1$. Here, each $X_k$ considers values in the set $\{\pm e_1,\pm e_2,\dots,\pm e_d\}$ with equal probability $1/2d$.  
The step sizes are determined by the sequence $\{Y_n\}_{n\ge 2}$ of non-negative, independent and identically distributed (iid) random variables with finite mean $\mu $ and finite variance $\eta^2$. We assume that the walker starts from the origin, that is, the initial position $S_0=0$, and the position of the walker after $n\ge1$ steps is $S_n$.

Let $\mathcal{F}_n=\sigma(T_1,T_2,\dots,T_n)$, $n\ge1$ be the $\sigma$-field generated by the entire history of the walk up to step $n$.
For any $G\in \mathcal{F}_n$, we have
\begin{align*}
	\int_G \mathbb{E}(A_{n+1}X_{\beta(n)}(Y_{n+1}-\mu)|\mathcal{F}_n)\,\mathrm{d}\mathbb{P}
	&=\int_G A_{n+1}X_{\beta(n)}(Y_{n+1}-\mu)\,\mathrm{d}\mathbb{P}\\
	&=\mathbb{E}(A_{n+1}X_{\beta(n)}(Y_{n+1}-\mu)\mathbb{I}_G)\\
	&=\mathbb{E}(Y_{n+1}-\mu)\mathbb{E}(A_{n+1}X_{\beta(n)}\mathbb{I}_G)\\
	&=0,
\end{align*}
where the penultimate step follows from the independence of $Y_{n+1}$ with $A_{n+1}$, $\beta(n)$ and $\{X_1,X_2,\dots,X_n\}$. The last step follows from the assumption that $\{Y_n\}_{n\ge 2}$ is a sequence of iid random variables with finite mean $\mu $. Finally, by using Radon-Nikodym theorem, we get
\begin{equation}\label{Y-mu}
	\mathbb{E}(A_{n+1}X_{\beta(n)}(Y_{n+1}-\mu)|\mathcal{F}_n)=0, \ n\ge 1.
\end{equation}

\begin{lemma}
For $n\ge 1$, let $W_n$ and $S_n$ be as given in \eqref{W_n} and \eqref{S_n}, respectively. Also, let $M_n=S_n- \mu W_n$ and $\mathcal{F}_n=\sigma(T_1,T_2,\dots,T_n)$. Then, $\{M_n,\mathcal{F}_n\}_{n\ge 1}$ is a martingale.
\end{lemma}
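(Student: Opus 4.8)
The plan is to verify the three defining properties of a vector martingale with respect to $\{\mathcal{F}_n\}_{n\ge 1}$: adaptedness, integrability, and the identity $\mathbb{E}(M_{n+1}\mid\mathcal{F}_n)=M_n$. The substantive content is already isolated in \eqref{Y-mu}, so the remaining work is largely bookkeeping, and I would organize it around that observation.

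First I would settle adaptedness. By \eqref{S_n} the partial sum $S_n=\sum_{k=1}^n T_k$ is manifestly $\mathcal{F}_n$-measurable. For $W_n=\sum_{k=1}^n X_k$ I would note that each direction $X_k$ is recoverable from $T_k$: since $X_k$ takes values in $\{\pm e_1,\dots,\pm e_d\}$ we have $\|X_k\|=1$, so $\|T_k\|=Y_k$ and $X_k=T_k/\|T_k\|$ on the event $\{Y_k>0\}$. Hence $W_n$, and therefore $M_n=S_n-\mu W_n$, is $\mathcal{F}_n$-measurable. For integrability I would use the crude bound $\|M_n\|\le\|S_n\|+\mu\|W_n\|\le\sum_{k=1}^n Y_k+\mu n$ and take expectations; since $Y_1$ has finite mean $\mu_1$ and $Y_2,Y_3,\dots$ share the common finite mean $\mu$, this gives $\mathbb{E}\|M_n\|\le\mu_1+(n-1)\mu+\mu n<\infty$, so each $M_n$ is integrable.

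The key step is the martingale identity. Writing the increment and using \eqref{multstep} together with the identifications $X_{n+1}=A_{n+1}X_{\beta(n)}$ and $T_{n+1}=A_{n+1}X_{\beta(n)}Y_{n+1}$ from the model, I obtain
\begin{equation*}
M_{n+1}-M_n=(S_{n+1}-S_n)-\mu(W_{n+1}-W_n)=T_{n+1}-\mu X_{n+1}=A_{n+1}X_{\beta(n)}(Y_{n+1}-\mu).
\end{equation*}
Taking the conditional expectation given $\mathcal{F}_n$ and invoking \eqref{Y-mu} yields $\mathbb{E}(M_{n+1}-M_n\mid\mathcal{F}_n)=0$, that is, $\mathbb{E}(M_{n+1}\mid\mathcal{F}_n)=M_n$ for every $n\ge 1$. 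Combined with the adaptedness and integrability established above, this shows that $\{M_n,\mathcal{F}_n\}_{n\ge 1}$ is a martingale.

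There is no serious obstacle here; the only points requiring care are precisely the recognition that the increment collapses \emph{exactly} to the quantity appearing in \eqref{Y-mu}, and the minor subtlety that the first step size $Y_1$ has mean $\mu_1\neq\mu$. The latter does not interfere with the martingale property, since the vanishing conditional-expectation recursion \eqref{Y-mu} is valid for all $n\ge 1$ and the differing mean affects only $M_1$ as the initial value of the process, not the one-step increments.
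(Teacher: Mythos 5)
Your central computation coincides exactly with the paper's proof: the paper writes
\begin{align*}
\mathbb{E}(M_{n+1}|\mathcal{F}_n)
&=\mathbb{E}(S_n+T_{n+1}-\mu W_n-\mu X_{n+1}|\mathcal{F}_n)\\
&=M_n+\mathbb{E}(A_{n+1}X_{\beta(n)}(Y_{n+1}-\mu)|\mathcal{F}_n)=M_n,
\end{align*}
and closes by citing \eqref{Y-mu}, which is precisely your key step of collapsing the increment to $A_{n+1}X_{\beta(n)}(Y_{n+1}-\mu)$. Your observation that the atypical mean $\mu_1$ of $Y_1$ affects only the initial value $M_1=X_1(Y_1-\mu)$ and not the one-step increments is also correct, and your integrability bound $\mathbb{E}\parallel M_n\parallel\le \mu_1+(n-1)\mu+\mu n$ is fine since $\parallel T_k\parallel=Y_k$ and $\parallel X_k\parallel=1$.

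There is, however, one step in your added bookkeeping that fails in a regime the paper explicitly allows. You argue that $W_n$ is $\mathcal{F}_n$-measurable because $X_k=T_k/\parallel T_k\parallel$ on $\{Y_k>0\}$, and then conclude ``hence $W_n$ is $\mathcal{F}_n$-measurable.'' But for $k\ge 2$ the step sizes are only assumed non-negative, and the paper later sets $b=\mathbb{P}\{Y_k=0\}$, studying precisely the delays $\{Y_k=0\}$; when $b>0$, on that event $T_k=0$ and the direction $X_k$ is \emph{not} a function of $T_1,\dots,T_n$, so $W_n$, and hence $M_n=S_n-\mu W_n$, need not be $\sigma(T_1,\dots,T_n)$-measurable. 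Your recovery argument therefore establishes adaptedness only under the extra hypothesis $Y_k>0$ a.s. To be fair, the paper's own proof does not address adaptedness at all (it verbally identifies $\mathcal{F}_n$ with ``the entire history of the walk''), and the clean repair is to enlarge the filtration, e.g. $\mathcal{F}_n=\sigma(X_1,\dots,X_n,Y_1,\dots,Y_n)$, under which \eqref{Y-mu} and your martingale identity go through verbatim, since $Y_{n+1}$ remains independent of this larger $\sigma$-field. So: same route as the paper for the substantive step, with a genuine but repairable hole in the measurability check you volunteered.
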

\begin{proof}
By using \eqref{W_n} and \eqref{S_n}, we have
	\begin{align*}
		\mathbb{E}(M_{n+1}|\mathcal{F}_n)
		&=\mathbb{E}(S_n+T_{n+1}-\mu W_n-\mu X_{n+1}|\mathcal{F}_n)\\
		&=M_n+\mathbb{E}(A_{n+1}X_{\beta(n)}(Y_{n+1}-\mu)|\mathcal{F}_n)\\
		&=M_n,
	\end{align*}
	where the last step follows from \eqref{Y-mu}.
	This completes the proof.
\end{proof}

Consider the following martingale differences:
\begin{equation}\label{Delta_Mn}
	\Delta M_n= M_n - M_{n-1}, \ n\ge 1,
\end{equation}
with $M_0=0$.

For martingale $\{M_n,\mathcal{F}_n\}_{n\ge 1}$, its predictable square variation $\langle M\rangle_n$, $n\ge0$ defined in Section \ref{predictable} can be equivalently written as
\begin{equation}
	\langle M\rangle_n =\sum_{k=1}^{n}\mathbb{E}(\Delta M_k (\Delta M_k)^t|\mathcal{F}_{k-1}), \ n\ge 1.
\end{equation}
Then, for $n\ge 1$,
\begin{align}\label{trace}
	\text{tr}(\langle M\rangle_n)
	&=\sum_{k=1}^{n}\mathbb{E}((\Delta M_k)^t \Delta M_k |\mathcal{F}_{k-1})\nonumber\\
	&=\sum_{k=1}^{n}\mathbb{E}(\parallel \Delta M_k\parallel^2 |\mathcal{F}_{k-1}).
\end{align}

Now, for $k\ge 1$, we have
\begin{align}\label{DeltaM_k+1}
	\Delta M_{k+1}
	&= S_{k+1}-\mu W_{k+1} -(S_{k}-\mu W_k)\nonumber\\
	&= T_{k+1}-\mu X_{k+1}\nonumber\\
	&= A_{k+1}X_{\beta(k)}(Y_{k+1}-\mu),
\end{align}
which implies 
\begin{equation}\label{norm_Mk}
	\parallel \Delta M_{k+1}\parallel^2 =(Y_{k+1}-\mu)^2 X_{\beta(k)}^t A_{k+1}^t A_{k+1} X_{\beta(k)}.
\end{equation}
For any $G\in\mathcal{F}_k$, by using \eqref{norm_Mk}, we get
\begin{align}
	\int_G\mathbb{E}(\parallel \Delta M_{k+1}\parallel^2 |\mathcal{F}_k)\,\mathrm{d}\mathbb{P}
	&=\int_G\mathbb{E}((Y_{k+1}-\mu)^2 X_{\beta(k)}^t A_{k+1}^t A_{k+1} X_{\beta(k)} |\mathcal{F}_k)\,\mathrm{d}\mathbb{P}\nonumber\\
	&=\mathbb{E}(((Y_{k+1}-\mu)^2) X_{\beta(k)}^t A_{k+1}^t A_{k+1} X_{\beta(k)}\mathbb{I}_G)\nonumber\\
	&=\mathbb{E}(Y_{k+1}-\mu)^2\mathbb{E}(X_{\beta(k)}^t A_{k+1}^t A_{k+1} X_{\beta(k)}\mathbb{I}_G)\label{indep1}\\
	&=\eta^2 \sum_{l=1}^{k}\mathbb{P}\{\beta(k)=l\}\mathbb{E}(X_{\beta(k)}^t A_{k+1}^t A_{k+1} X_{\beta(k)}\mathbb{I}_G|\beta(k)=l)\nonumber\\
	&= \frac{\eta^2}{k}\sum_{l=1}^{k}\mathbb{E}(X_{l}^t A_{k+1}^t A_{k+1} X_l\mathbb{I}_G)\label{indep2}\\
	&=\frac{\eta^2}{k}\sum_{l=1}^{k}\mathbb{P}\{A_{k+1}=I_d\}\mathbb{E}(X_l^t A_{k+1}^t A_{k+1}X_l \mathbb{I}_G|A_{k+1}=I_d)\nonumber\\
	&\ \ +\frac{\eta^2}{k}\sum_{l=1}^{k}\mathbb{P}\{A_{k+1}=-I_d\}\mathbb{E}(X_l^t A_{k+1}^t A_{k+1}X_l \mathbb{I}_G|A_{k+1}=-I_d)\nonumber\\
	&\ \ +\frac{\eta^2}{k}\sum_{l=1}^{k}\sum_{i=1}^{d-1}\mathbb{P}\{A_{k+1}=J_d^i\}\mathbb{E}(X_l^t A_{k+1}^t A_{k+1}X_l \mathbb{I}_G| A_{k+1}=J_d^i)\nonumber\\
	&\ \ +\frac{\eta^2}{k}\sum_{l=1}^{k}\sum_{i=1}^{d-1}\mathbb{P}\{A_{k+1}=-J_d^i\}\mathbb{E}(X_l^t A_{k+1}^t A_{k+1}X_l \mathbb{I}_G|A_{k+1}= -J_d^i)\nonumber\\
	&=\frac{\eta^2}{k}\sum_{l=1}^{k}\Big(p+(2d-1)\frac{1-p}{2d-1}\Big)\mathbb{E}(\parallel X_l\parallel^2\mathbb{I}_G)\nonumber\\
	&=\frac{\eta^2}{k}\sum_{l=1}^{k}\int_G \parallel X_l\parallel^2\,\mathrm{d}\mathbb{P},\label{last}
\end{align}
where \eqref{indep1} follows from the independence of $Y_{k+1}$ with $\beta(k)$, $A_{k+1}$ and $\{X_1,X_2,\dots,X_k\}$. Similarly, \eqref{indep2} follows on using the independence of $\beta(k)$ with $A_{k+1}$ and $\{X_1,X_2,$ $\dots,X_k\}$. Moreover, the independence of $A_{k+1}$ and $\{X_1,X_2,\dots,X_k\}$ is used to get the penultimate step.

From \eqref{last}, we conclude that
\begin{equation}\label{M2onwards}
	\mathbb{E}(\parallel \Delta M_{k+1}\parallel^2 |\mathcal{F}_k)=\eta^2, \ k\ge 1.
\end{equation}

Let $\mathcal{F}_0=\{\phi, \Omega\}$. Then, from \eqref{first-step} and \eqref{Delta_Mn}, we have
\begin{equation*}
	\Delta M_1=M_1=X_1(Y_1-\mu),
\end{equation*}
which implies
\begin{equation}\label{M1}
	\mathbb{E}(\parallel \Delta M_1\parallel^2|\mathcal{F}_0)=\eta_1^2+(\mu-\mu_1)^2.
\end{equation}
Finally, on substituting \eqref{M2onwards} and \eqref{M1} in \eqref{trace}, we get
\begin{equation}\label{final-trace}
	\text{tr}(\langle M\rangle_n)=\eta_1^2+(\mu-\mu_1)^2+ (n-1)\eta^2, \ n\ge 1.
\end{equation}
\subsection{Moves in MERW with random step sizes} 
Here, we study the number of moves in MERW with random step sizes. In this case, the time instants at which the step size is zero, that is, the walker remains at its current position are referred as delays. By the construction of model, we note that the number of moves till $n$-th step is at least one.
	
Let $\mathcal{Z}_n^*$ and $\mathcal{Z}_n$ denote the number of moves and delays till $n$-th step, respectively. Then, for all $n\ge 1$, we have $\mathcal{Z}_n^* + \mathcal{Z}_n=n.$
Also, 
	\begin{align}\label{N_ones}
		\mathcal{Z}_n^*&=\sum_{k=1}^{n}\mathbb{I}_{\{Y_k\ne  0\}},\ n\ge 1\\
		&=1+\sum_{k=2}^{n}\mathbb{I}_{\{Y_k\ne  0\}}.\nonumber
	\end{align}
For $k\ge 2$, let $Y_k=0$ with probability $b$.	
	\begin{lemma}
		For $n\ge 1$, let $N_n=\mathcal{Z}_n^* -(n-1)(1-b)$ and $\mathcal{F}_n=\sigma(T_1,T_2,\dots,T_n)$. Then, $\{N_n,\mathcal{F}_n\}_{n\ge 1}$ is a martingale.
	\end{lemma}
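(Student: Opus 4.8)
The plan is to verify the three defining properties of the martingale $\{N_n,\mathcal{F}_n\}_{n\ge 1}$ directly: adaptedness, integrability, and the identity $\mathbb{E}(N_{n+1}|\mathcal{F}_n)=N_n$. Integrability is immediate, since $0\le \mathcal{Z}_n^*\le n$ by \eqref{N_ones}, so $N_n=\mathcal{Z}_n^*-(n-1)(1-b)$ is bounded and hence in $\mathbb{L}^1$. The real content lies in the adaptedness and in the conditional-expectation computation.

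For adaptedness, the key observation I would record first is that the only mechanism producing a zero step in this model is $Y_k=0$: each $A_k$ takes values among $\pm I_d$ and $\pm J_d^i$, which are norm-preserving, and each $X_{\beta(k-1)}$ is one of the unit vectors $\pm e_1,\dots,\pm e_d$, so $\parallel T_k\parallel=Y_k$ for every $k$. Consequently $\{Y_k\ne 0\}=\{T_k\ne 0\}$, which is $\mathcal{F}_k$-measurable, so each indicator $\mathbb{I}_{\{Y_k\ne 0\}}$ appearing in \eqref{N_ones} is $\mathcal{F}_n$-measurable for $k\le n$. This makes $\mathcal{Z}_n^*$, and therefore $N_n$, adapted to $\{\mathcal{F}_n\}_{n\ge 1}$.

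For the martingale identity, I would use \eqref{N_ones} to write $\mathcal{Z}_{n+1}^*=\mathcal{Z}_n^*+\mathbb{I}_{\{Y_{n+1}\ne 0\}}$, so that
\begin{equation*}
\mathbb{E}(N_{n+1}|\mathcal{F}_n)=\mathcal{Z}_n^*+\mathbb{E}(\mathbb{I}_{\{Y_{n+1}\ne 0\}}|\mathcal{F}_n)-n(1-b).
\end{equation*}
The crux is then to show $\mathbb{E}(\mathbb{I}_{\{Y_{n+1}\ne 0\}}|\mathcal{F}_n)=1-b$. This follows because $Y_{n+1}$ is independent of $\mathcal{F}_n=\sigma(T_1,\dots,T_n)$: each $T_j$ with $j\le n$ is a function of $A_2,\dots,A_n$, $\beta(1),\dots,\beta(n-1)$, $X_1,\dots,X_n$ and $Y_1,\dots,Y_n$, all of which are independent of $Y_{n+1}$ by the model assumptions and the iid nature of $\{Y_n\}_{n\ge 2}$. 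Hence the conditional expectation collapses to the unconditional probability $\mathbb{P}\{Y_{n+1}\ne 0\}=1-b$ (valid for $n+1\ge 2$, i.e.\ all $n\ge 1$). Substituting gives $\mathbb{E}(N_{n+1}|\mathcal{F}_n)=\mathcal{Z}_n^*+(1-b)-n(1-b)=\mathcal{Z}_n^*-(n-1)(1-b)=N_n$, as required.

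The step I expect to require the most care is the independence argument for $\mathbb{E}(\mathbb{I}_{\{Y_{n+1}\ne 0\}}|\mathcal{F}_n)=1-b$, since one must confirm that $Y_{n+1}$ is genuinely independent of the entire generated $\sigma$-field $\mathcal{F}_n$ and not merely of the increments listed in the model's independence hypotheses; the norm-preservation remark used for adaptedness is elementary by comparison. Everything else is bookkeeping.
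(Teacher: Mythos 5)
Your proof is correct and follows essentially the same route as the paper: write $N_{n+1}=\mathcal{Z}_n^*+\mathbb{I}_{\{Y_{n+1}\ne 0\}}-n(1-b)$ and use the independence of $Y_{n+1}$ from $\mathcal{F}_n=\sigma(T_1,\dots,T_n)$ to reduce the conditional expectation to $\mathbb{P}\{Y_{n+1}\ne 0\}=1-b$. Your additional verification of adaptedness via the observation that $\parallel T_k\parallel=Y_k$ (so $\{Y_k\ne 0\}=\{T_k\ne 0\}\in\mathcal{F}_k$) and of integrability is a welcome tightening of details the paper leaves implicit, but it does not change the argument.
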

	
	\begin{proof}
		For $n\ge 1$, we have
		\begin{align*}
			\mathbb{E}(N_{n+1}|\mathcal{F}_n)
			&=\mathbb{E}(\mathcal{Z}_n^*+\mathbb{I}_{\{Y_{n+1}\ne  0\}}-n(1-b)|\mathcal{F}_n)\\
			&=\mathcal{Z}_n^*+\mathbb{E}(\mathbb{I}_{\{Y_{n+1}\ne  0\}})-n(1-b)=N_n,
		\end{align*}
		where the penultimate step follows on using the independence of $Y_{n+1}$ and $\{T_1,T_2,\dots,T_n\}$.\\
		This completes the proof.
	\end{proof}
	
Let us consider the following martingale differences:
	\begin{equation}\label{DeltaN_n}
		\Delta N_n=N_n-N_{n-1}, \ n\ge 1,
	\end{equation}
	with $N_0=0$.
	
	From \eqref{DeltaN_n}, we have
	\begin{equation*}
		N_n=\sum_{k=1}^{n}\Delta N_k, \ n\ge 1.
	\end{equation*}
Next, we obtain the LLN for $\{\mathcal{Z}_n^*\}_{n\ge1}$.
	\begin{theorem}
Let $b=\mathbb{P}\{Y_k=0\}$, $k\ge2$. Then, $\lim_{n\to \infty}\mathcal{Z}_n^*/n=1-b$ a.s.
	\end{theorem}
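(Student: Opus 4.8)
The plan is to derive the result from the martingale law of large numbers (Theorem~\ref{thm_LLN1}) applied to $\{N_n,\mathcal{F}_n\}_{n\ge 1}$. Since $N_n=\mathcal{Z}_n^*-(n-1)(1-b)$, we may write
\begin{equation*}
\frac{\mathcal{Z}_n^*}{n}=\frac{N_n}{n}+\frac{(n-1)(1-b)}{n},
\end{equation*}
and because the second term converges to $1-b$, it suffices to prove that $N_n/n\to 0$ a.s.

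To this end I would first compute the predictable square variation of $\{N_n\}$. From \eqref{N_ones} and \eqref{DeltaN_n}, the martingale differences are $\Delta N_n=\mathbb{I}_{\{Y_n\ne 0\}}-(1-b)$ for $n\ge 2$, while $\Delta N_1=1$. Using the independence of $Y_n$ from $\mathcal{F}_{n-1}$ together with $\mathbb{P}\{Y_n\ne 0\}=1-b$, the indicator $\mathbb{I}_{\{Y_n\ne 0\}}$ is a Bernoulli variable with parameter $1-b$, so its conditional variance equals the unconditional one, giving $\mathbb{E}((\Delta N_n)^2|\mathcal{F}_{n-1})=b(1-b)$ for $n\ge 2$. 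Summing these conditional second moments yields
\begin{equation*}
\langle N\rangle_n=1+(n-1)b(1-b),\quad n\ge 1,
\end{equation*}
so that $\langle N\rangle_n\sim nb(1-b)$ as $n\to\infty$.

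Assuming $0<b<1$, we have $\langle N\rangle_\infty=\infty$; since the bounded differences make $\{N_n\}$ square-integrable, Theorem~\ref{thm_LLN1}(ii) gives $\lim_{n\to\infty}N_n/\langle N\rangle_n=0$ a.s. Combined with $\langle N\rangle_n\sim nb(1-b)$, this produces $N_n/n\to 0$ a.s., and hence $\mathcal{Z}_n^*/n\to 1-b$ a.s. The degenerate cases are immediate: if $b=0$ then $\mathcal{Z}_n^*=n$, while if $b=1$ then $\mathcal{Z}_n^*=1$, so $\mathcal{Z}_n^*/n\to 1-b$ in both instances. I do not expect a substantive obstacle, as the computation is elementary; the only points requiring care are the correct evaluation of $\mathbb{E}((\Delta N_n)^2|\mathcal{F}_{n-1})$ via the independent Bernoulli structure, and the separate treatment of $b\in\{0,1\}$, where $\langle N\rangle_n$ remains bounded and Theorem~\ref{thm_LLN1}(ii) no longer applies.
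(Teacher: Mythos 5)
Your proof is correct, but it rests on a different strong law than the paper's. Both arguments use the same martingale $N_n=\mathcal{Z}_n^*-(n-1)(1-b)$ and the same conditional variance computation $\mathbb{E}((\Delta N_{n+1})^2|\mathcal{F}_n)=b(1-b)$ (the paper's \eqref{Npow2}); the paper, however, then invokes Theorem \ref{thm_LLN2} with $\Phi_k\equiv 1$, where the normalizer is the deterministic sequence $s_{n-1}=n$ with $s_\infty=\infty$ for \emph{every} $b\in[0,1]$, so $N_n/n\to 0$ a.s.\ follows in one stroke with no case analysis. You instead apply Theorem \ref{thm_LLN1}(ii), which normalizes by the predictable square variation $\langle N\rangle_n=1+(n-1)b(1-b)$ (the same quantity the paper computes later for its CLT), and this forces the hypothesis $\langle N\rangle_\infty=\infty$, i.e., $0<b<1$, so you must dispose of $b\in\{0,1\}$ by hand. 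You do this correctly: $\mathcal{Z}_n^*=n$ a.s.\ when $b=0$ and $\mathcal{Z}_n^*=1$ a.s.\ when $b=1$ (using that $Y_1>0$ a.s.), and since $\langle N\rangle_n$ is deterministic here, the event $\{\langle N\rangle_\infty=\infty\}$ is all of $\Omega$ when $0<b<1$, so the theorem applies on the whole space as claimed. You could even have avoided the explicit degenerate computation by noting that when $b\in\{0,1\}$ one has $\langle N\rangle_\infty<\infty$, so Theorem \ref{thm_LLN1}(i) gives $N_n\to N_\infty$ finite a.s.\ and hence $N_n/n\to 0$ in those cases too. The trade-off between the two routes: the paper's is uniform in $b$ and marginally shorter, while yours yields the slightly sharper conclusion $N_n=o(\langle N\rangle_n)$ a.s.\ in the nondegenerate regime and pre-computes $\langle N\rangle_n$, which the paper needs anyway in its CLT proof.
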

	
	\begin{proof}
We have 
		\begin{equation}\label{Npow1}
			\mathbb{E}(\Delta N_{n+1}|\mathcal{F}_n)=0, \ n\ge 1.
		\end{equation}
From \eqref{N_ones} and \eqref{DeltaN_n}, we get
	\begin{equation}\label{DeltaN_n+1}
		\Delta N_{n+1}=\mathbb{I}_{\{Y_{n+1}\ne  0\}}-(1-b), \ n\ge 1.
\end{equation}
Thus, 
\begin{align}\label{Npow2}
\mathbb{E}((\Delta N_{n+1})^2|\mathcal{F}_n)
&=\mathbb{E}((\mathbb{I}_{\{Y_{n+1}\ne  0\}}-(1-b))^2 )\nonumber\\
&=(2b-1)\mathbb{E}(\mathbb{I}_{\{Y_{n+1}\ne  0\}})+(1-b)^2\nonumber\\
&=b(1-b).
\end{align}
From \eqref{Npow1}, \eqref{Npow2} and Theorem \ref{thm_LLN2}, we obtain
\begin{equation*}
\lim_{n\to \infty}\frac{1}{n}(\mathcal{Z}_n^*-(n-1)(1-b))=0 \ \text{a.s.},
\end{equation*}
which gives to the required result.	
\end{proof}	
\begin{theorem}
Let $b=\mathbb{P}\{Y_k=0\}$, $k\ge2$. Then, we have the following QSL for $\{\mathcal{Z}_n^*\}_{n\ge1}$:
\begin{equation*}
\lim_{n\to\infty}\frac{1}{\log (n+1)}\sum_{k=1}^{n}\frac{(\mathcal{Z}_k^*-(k-1)(1-b))^2}{k(k+1)}=b(1-b) \ \text{a.s.}
\end{equation*}
\end{theorem}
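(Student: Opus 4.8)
The plan is to recognize the displayed limit as a direct instance of the martingale quadratic strong law, Theorem \ref{quadLaw}, applied to the martingale $\{N_n,\mathcal{F}_n\}_{n\ge 1}$ with $N_n=\mathcal{Z}_n^*-(n-1)(1-b)$. The crucial observation is that the natural weight sequence here is trivial: I would take $\epsilon_k=\Delta N_k$ and $\Phi_k\equiv 1$ for all $k\ge 0$. With this choice the martingale transform $H_n=\sum_{k=1}^{n}\Phi_{k-1}\epsilon_k=\sum_{k=1}^{n}\Delta N_k=N_n$ recovers the martingale itself (since $N_0=0$), while $v_n=\sum_{k=0}^{n}\Phi_k^2=n+1$ and the explosion coefficient becomes $f_k=\Phi_k^2/v_k=1/(k+1)$. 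Consequently $\log v_n=\log(n+1)$ and $v_{k-1}=k$, so the summand $f_k\,(H_k^2/v_{k-1})$ collapses to exactly $N_k^2/(k(k+1))$, matching the sum in the statement term by term.

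The second step is to check the three hypotheses of Theorem \ref{quadLaw} with $m=1$. Condition (i) holds with $\sigma^2=b(1-b)$, since $\mathbb{E}((\Delta N_{n+1})^2|\mathcal{F}_n)=b(1-b)$ is precisely the conditional second moment already computed in \eqref{Npow2}. Condition (ii) is immediate from boundedness: by \eqref{DeltaN_n+1} each difference $\Delta N_{n+1}=\mathbb{I}_{\{Y_{n+1}\ne 0\}}-(1-b)$ takes only the two values $b$ and $-(1-b)$, hence $|\Delta N_{n+1}|\le 1$ and $\sup_{n\ge 0}\mathbb{E}(|\epsilon_{n+1}|^a|\mathcal{F}_n)\le 1<\infty$ for, say, $a=4>2=2m$. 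Condition (iii) is the trivial observation that $f_n=1/(n+1)\to 0$ a.s.

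With all hypotheses in place, Theorem \ref{quadLaw} for $m=1$ gives $\lim_{n\to\infty}(\log v_n)^{-1}\sum_{k=1}^{n}f_k\,(H_k^2/v_{k-1})=2!\,\sigma^2/(1!\,2)=\sigma^2=b(1-b)$, and substituting $v_n=n+1$, $f_k=1/(k+1)$, $v_{k-1}=k$ and $H_k=N_k=\mathcal{Z}_k^*-(k-1)(1-b)$ yields the claimed identity. The only point requiring genuine care is the homogeneity condition (i): it must hold with a truly constant $\sigma^2$. Here the first difference $\epsilon_1=\Delta N_1=N_1=1$ has conditional second moment $1\ne b(1-b)$, but this is harmless, because the hypothesis is imposed only from $n\ge 1$ onward (that is, on $\epsilon_2,\epsilon_3,\dots$, where \eqref{Npow2} applies) and a single initial term is asymptotically negligible after division by $\log(n+1)\to\infty$. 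Everything else is routine bookkeeping of indices, so I do not expect a substantive obstacle.
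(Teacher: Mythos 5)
Your proof is correct and takes essentially the same route as the paper: both apply Theorem \ref{quadLaw} with $m=1$, $\Phi_n\equiv 1$ and $\epsilon_k=\Delta N_k$, so that $H_n=N_n$, $v_n=n+1$, $f_k=1/(k+1)$, with conditions (i)--(iii) verified via \eqref{Npow2} and the bound $|\Delta N_{n+1}|\le 1$. Your extra remark that the atypical first increment $\Delta N_1=1$ (whose conditional second moment is $1\ne b(1-b)$) is harmless, since the homogeneity hypothesis is only needed from $n\ge 1$ onward and a single term is negligible after dividing by $\log(n+1)$, is a detail the paper passes over silently but does not alter the argument.
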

\begin{proof}
From \eqref{DeltaN_n+1}, we have $(\Delta N_{n+1})^2\le 1$ for all $n\ge 1$, and $\Delta N_1=1$. Thus, 
\begin{equation*}
\sup_{n\ge 0}\mathbb{E}(\Delta N_{n+1})^4|\mathcal{F}_n)< \infty.
\end{equation*}
Let $\Phi_n=1$ for all $n\ge 0$ in Theorem \ref{quadLaw}. So, $v_n=n+1$, and hence $v_\infty=\infty$. Also, the explosion coefficient associated with $\{\Phi_n\}_{n\ge 0}$ is $f_n=1/(n+1)$ which tends to 0 as $n\to\infty$. By using Theorem \ref{quadLaw} and \eqref{Npow2}, we obtain
\begin{equation*}
\lim_{n\to\infty}\frac{1}{\log (n+1)}\sum_{k=1}^{n}\frac{N_k^2}{k(k+1)}=b(1-b) \ \text{a.s.},
\end{equation*}
which yields the required result.
\end{proof}	
\begin{theorem}
Let $b=\mathbb{P}\{Y_k=0\}$, $k\ge2$. Then, the following LIL for $\{\mathcal{Z}_n^*\}_{n\ge1}$ holds true:
\begin{equation*}
\limsup_{n\to\infty}\frac{|\mathcal{Z}_n^*-(n-1)(1-b)|}{\sqrt{2n\log\log n}}\le \sqrt{b(1-b)} \ \text{a.s.}
\end{equation*}
\end{theorem}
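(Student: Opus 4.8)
The plan is to apply the martingale law of iterated logarithm, Theorem~\ref{LIL}, directly to the martingale $\{N_n,\mathcal{F}_n\}_{n\ge 1}$ with $N_n=\mathcal{Z}_n^*-(n-1)(1-b)$. First I would identify the ingredients of Theorem~\ref{LIL}: take $\epsilon_k=\Delta N_k$ and $\Phi_k=U_k=1$ for all $k\ge 0$, so that $M_n=\sum_{k=1}^{n}\Phi_{k-1}\epsilon_k=\sum_{k=1}^{n}\Delta N_k=N_n$ and the comparison $|\Phi_k|\le U_k$ holds trivially.

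Next I would verify the two hypotheses on the differences. Condition~(i) is immediate: the martingale property \eqref{Npow1} gives $\mathbb{E}(\epsilon_{n+1}|\mathcal{F}_n)=0$, while \eqref{Npow2} gives $\mathbb{E}(\epsilon_{n+1}^2|\mathcal{F}_n)=b(1-b)$ for $n\ge 1$, so the variance bound holds (with equality) and $\sigma^2=b(1-b)$. For condition~(ii) I would take $\alpha=1/2$; since $\Delta N_{n+1}=\mathbb{I}_{\{Y_{n+1}\ne 0\}}-(1-b)$ takes only the values $b$ and $-(1-b)$, we have $|\epsilon_{n+1}|\le 1$, whence $\sup_{n\ge 1}\mathbb{E}(|\epsilon_{n+1}|^{2+2\alpha}|\mathcal{F}_n)\le 1<\infty$.

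It then remains to check the summability requirements on $\tau_n=\sum_{k=0}^{n}U_k^2$. With $U_k\equiv 1$ we obtain $\tau_n=n+1$, so $\tau_\infty=\infty$ and, for $\alpha=1/2$, $\sum_{n=0}^{\infty}U_n^{2+2\alpha}\tau_n^{-1-\alpha}=\sum_{n=0}^{\infty}(n+1)^{-3/2}<\infty$. With all hypotheses verified, Theorem~\ref{LIL} yields
\begin{equation*}
\limsup_{n\to\infty}\frac{|N_n|}{\sqrt{2\tau_{n-1}\log\log \tau_{n-1}}}\le \sigma\ \text{a.s.}
\end{equation*}
Finally I would substitute $\tau_{n-1}=n$, $\sigma=\sqrt{b(1-b)}$ and $N_n=\mathcal{Z}_n^*-(n-1)(1-b)$ to read off the claim.

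I do not anticipate a genuine obstacle in this argument, as it is essentially a clean verification of the hypotheses of Theorem~\ref{LIL} for the bounded martingale differences $\Delta N_k$. The only point requiring a little care is that the first difference $\Delta N_1=1$ differs from the subsequent ones; this is harmless, however, because Theorem~\ref{LIL} constrains only $\epsilon_{n+1}$ for $n\ge 1$ and a single bounded initial term does not affect the limit superior.
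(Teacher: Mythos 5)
Your proposal is correct and follows essentially the same route as the paper: both apply Theorem~\ref{LIL} to the martingale $\{N_n,\mathcal{F}_n\}_{n\ge 1}$ with $\Phi_n=U_n=1$ and $\alpha=1/2$, verifying the conditional moment hypotheses via \eqref{Npow1}, \eqref{Npow2} and the boundedness $|\Delta N_{n+1}|\le 1$ (hence the third-moment bound \eqref{supNn}), together with $\tau_n=n+1$ and $\sum_{n=0}^{\infty}\tau_n^{-3/2}<\infty$. Your remark about the initial term $\Delta N_1=1$ being harmless is a detail the paper leaves implicit, but it is handled correctly.
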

\begin{proof}
For $n\ge 1$, from \eqref{DeltaN_n+1}, we have
\begin{equation*}
\mathbb{E}(|\Delta N_{n+1}|^3|\mathcal{F}_n)	=\mathbb{E}\big(|\mathbb{I}_{\{Y_{n+1}\ne  0\}}-(1-b)|^3\big)
			\le 1.
\end{equation*}
Consequently,
		\begin{equation}\label{supNn}
			\sup_{n\ge1}\mathbb{E}(|\Delta N_{n+1}|^3|\mathcal{F}_n)< \infty.
		\end{equation}
On taking $U_n=1$, $n\ge0$ in Theorem \ref{LIL}, we have $\tau_n=n+1$. So,
		\begin{equation}\label{vn_conv}
			\sum_{n=0}^{\infty}\tau_n^{-3/2}<\infty.
		\end{equation}		
		By using \eqref{Npow1}, \eqref{Npow2}, \eqref{supNn}, \eqref{vn_conv} and Theorem \ref{LIL}, we obtain the required result.
	\end{proof}
Next, we provide a CLT related to the number of moves in MERW with random step sizes.
\begin{theorem}
Let $b=\mathbb{P}\{Y_k=0\}$, $k\ge2$. Then, $(\mathcal{Z}_n^*-(n-1)(1-b))/\sqrt{n}\xrightarrow{d}\mathcal{N}(0,b(1-b))$.
\end{theorem}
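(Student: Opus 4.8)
The plan is to deduce this central limit theorem directly from the martingale CLT, Theorem \ref{CLT}, applied to the real (one-dimensional) square-integrable martingale $\{N_n,\mathcal{F}_n\}_{n\ge 1}$ with $N_n=\mathcal{Z}_n^*-(n-1)(1-b)$. All the structural information needed has already been assembled: the increments satisfy $\mathbb{E}(\Delta N_{n+1}|\mathcal{F}_n)=0$ by \eqref{Npow1}, $\mathbb{E}((\Delta N_{n+1})^2|\mathcal{F}_n)=b(1-b)$ by \eqref{Npow2}, and they are uniformly bounded, $|\Delta N_{n+1}|\le 1$, as a consequence of \eqref{DeltaN_n+1}.

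First, I would compute the predictable square variation $\langle N\rangle_n=\sum_{k=1}^{n}\mathbb{E}((\Delta N_k)^2|\mathcal{F}_{k-1})$. Since $N_1=\mathcal{Z}_1^*=1$ gives $\mathbb{E}((\Delta N_1)^2|\mathcal{F}_0)=1$, while every subsequent term equals $b(1-b)$ by \eqref{Npow2}, one obtains the closed form
\begin{equation*}
\langle N\rangle_n=1+(n-1)b(1-b), \ n\ge 1.
\end{equation*}
Choosing the deterministic normalizing sequence $b_n=n$ appearing in Theorem \ref{CLT}, which increases to $\infty$, it follows immediately that $b_n^{-1}\langle N\rangle_n\to b(1-b)$, and since the limit is deterministic this convergence holds in probability. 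Hence assumption (i) of Theorem \ref{CLT} is met with $\Lambda=b(1-b)$.

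Next, I would verify Lindeberg's condition, which is the only hypothesis that is genuinely a condition rather than an identity, but here it is rendered trivial by the boundedness of the increments. Indeed, because $|\Delta N_k|\le 1$ for all $k$, for any fixed $\epsilon>0$ and all $n>1/\epsilon^2$ the event $\{|\Delta N_k|\ge \epsilon\sqrt{n}\}$ is empty, so the truncated sum in assumption (ii) vanishes identically for large $n$. Thus Lindeberg's condition holds automatically.

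With both hypotheses verified, Theorem \ref{CLT} yields $n^{-1/2}N_n\xrightarrow{d}\mathcal{N}(0,b(1-b))$, which is exactly the assertion $(\mathcal{Z}_n^*-(n-1)(1-b))/\sqrt{n}\xrightarrow{d}\mathcal{N}(0,b(1-b))$. There is no substantial obstacle in this argument; the only mild subtlety is that the first increment contributes $1$ rather than $b(1-b)$ to the predictable square variation, but this boundary term is asymptotically negligible after dividing by $n$ and does not affect the limit.
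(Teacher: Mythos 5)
Your proposal is correct and follows essentially the same route as the paper: you apply the martingale CLT (Theorem \ref{CLT}) with $b_n=n$ to the martingale $\{N_n,\mathcal{F}_n\}_{n\ge 1}$, compute $\langle N\rangle_n=1+(n-1)b(1-b)$ exactly as the paper does (including the boundary contribution $\mathbb{E}((\Delta N_1)^2|\mathcal{F}_0)=1$), and dispose of Lindeberg's condition via the uniform bound $|\Delta N_k|\le 1$ from \eqref{DeltaN_n+1}. No gaps; your observation that the truncated sum vanishes identically once $n>1/\epsilon^2$ is a slightly more explicit version of the paper's verification.
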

\begin{proof}
	The predictable square variation of $\{N_n,\mathcal{F}_n\}_{n\ge 1}$ is given by
	\begin{equation}\label{quadN}
		\langle N\rangle_n=\sum_{k=1}^{n}\mathbb{E}((\Delta N_k)^2|\mathcal{F}_{k-1}).
	\end{equation}
As $\mathbb{E}((\Delta N_1)^2|\mathcal{F}_{0})=1$, by using \eqref{Npow2} and \eqref{quadN}, we obtain $\langle N\rangle_n=1+(n-1)b(1-b).$ Thus,
	\begin{equation}\label{c1}
		\lim_{n\to\infty}\frac{\langle N\rangle_n}{n}=b(1-b).
	\end{equation}
	From \eqref{DeltaN_n+1}, we get $|\Delta N_k|\le 1$ for all $k\ge 1$. So, for all $\epsilon>0$, we have
	\begin{equation}\label{c2}
		\lim_{n\to\infty}\frac{1}{n}\sum_{k=1}^{n}\mathbb{E}((\Delta N_k)^2\mathbb{I}_{\{|\Delta N_k|\ge \epsilon\sqrt{n}\}}|\mathcal{F}_{k-1})=0 \ \text{a.s.}
	\end{equation}
Now, by using \eqref{c1} and \eqref{c2}, the required result follows as a consequence of Theorem \ref{CLT}.
\end{proof}

\subsection{Law of large numbers}
Here, we state and prove some LLN-type results for MERW with random step sizes $\{S_n\}_{n\ge1}$ using a martingale approach. 
\begin{theorem}
Let $\alpha>1/2$ and $0\le p< (2d+1)/4d$. Then, 
$\lim_{n\to \infty} S_n/n^{\alpha}=0$ a.s.
\end{theorem}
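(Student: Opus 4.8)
The plan is to exploit the martingale decomposition already available from $M_n = S_n - \mu W_n$, namely $S_n = M_n + \mu W_n$, so that $S_n/n^\alpha = M_n/n^\alpha + \mu\, W_n/n^\alpha$. It then suffices to prove that each of the two terms tends to $0$ a.s. The hypothesis $\alpha > 1/2$ is what makes both pieces vanish, whereas the restriction $0 \le p < (2d+1)/(4d)$ will be used only to control the walk $W_n$.

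For the martingale part I would use that $\{M_n,\mathcal{F}_n\}$ has predictable square variation with $\mathrm{tr}(\langle M\rangle_n) = \eta_1^2 + (\mu-\mu_1)^2 + (n-1)\eta^2$ by \eqref{final-trace}, so that $s_n := \mathrm{tr}(\langle M\rangle_n) \sim \eta^2 n \to \infty$ on the regime $\eta>0$ (the degenerate case $\eta = 0$ forces $\Delta M_k = 0$ for $k\ge 2$ by \eqref{M2onwards}, so $M_n$ is bounded and the claim is trivial). The key model-specific observation is that, by \eqref{DeltaM_k+1}, \eqref{norm_Mk} and the independence of $Y_{k+1}$, one has $\mathbb{E}(\Delta M_{k+1}(\Delta M_{k+1})^t\mid\mathcal{F}_k) = \eta^2\,\mathbb{E}\big((A_{k+1}X_{\beta(k)})(A_{k+1}X_{\beta(k)})^t \mid \mathcal{F}_k\big)$, and since each realization $A_{k+1}X_{\beta(k)}$ equals $\pm e_j$ for some coordinate $j$, every summand is a convex combination of the matrices $e_l e_l^t$; hence $\langle M\rangle_n$ is diagonal. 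Moreover each diagonal entry accumulates at least the weight $c_0 := \min\{\,p + (1-p)/(2d-1),\ 2(1-p)/(2d-1)\,\} > 0$ per step, so $\lambda_{\min}(\langle M\rangle_n) \ge \eta^2 c_0(n-1)$, of exact order $n$, while $\lambda_{\max}(\langle M\rangle_n) \le s_n \sim \eta^2 n$. Applying Theorem \ref{LLN} with, say, $f(t)=t^2$ (so that $\int_1^\infty \mathrm{d}t/f(t)<\infty$) on $\{s_\infty=\infty\}$ bounds $\|\langle M\rangle_n^{-1/2}M_n\|^2$, and combining this with $\|M_n\|^2 \le \lambda_{\max}(\langle M\rangle_n)\,\|\langle M\rangle_n^{-1/2}M_n\|^2$ together with $\lambda_{\max}(\langle M\rangle_n)\le s_n$ yields $M_n = o(n^\alpha)$ a.s. for every $\alpha > 1/2$.

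For the walk I would first record that $\mathbb{E}(X_{n+1}\mid\mathcal{F}_n) = a\, W_n/n$ with $a = (2dp-1)/(2d-1)$, obtained exactly as in the computation leading to \eqref{norm_n+1} but using $\mathbb{E}(A_{n+1})$; thus $W_n$ is the MERW with memory parameter $a$. Since $a$ is strictly increasing in $p$ and $a = 1/2$ precisely when $p = (2d+1)/(4d)$, the hypothesis $p < (2d+1)/(4d)$ is equivalent to $a < 1/2$, i.e. the diffusive regime. In this regime the law of iterated logarithm for the MERW established by Bercu and Laulin (2019) and Bercu (2025) gives $\limsup_{n\to\infty}\|W_n\|/\sqrt{2n\log\log n} < \infty$ a.s., whence $W_n = o(n^\alpha)$ a.s. for every $\alpha > 1/2$; alternatively one re-derives this by the same route, normalizing $W_n$ by $\gamma_n = \prod_{k=1}^{n-1}(1 + a/k) \sim n^a/\Gamma(a+1)$ to form a martingale and applying Theorem \ref{LIL}. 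Combining the two estimates gives $S_n/n^\alpha \to 0$ a.s.

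I expect the main obstacle to be the $W_n$ term rather than $M_n$: identifying the exact threshold $p = (2d+1)/(4d)$ as the diffusive–critical boundary $a = 1/2$ and importing (or re-establishing) the diffusive-regime law of iterated logarithm for the MERW is the substantive input, since the sharp restriction on $p$ is precisely what keeps $W_n$ from outgrowing $n^\alpha$. The $M_n$ part is comparatively routine once one notices that $\langle M\rangle_n$ is diagonal with both eigenvalues of order $n$, which is the single genuinely model-specific computation there.
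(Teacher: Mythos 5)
Your proposal is correct and takes essentially the same route as the paper: the same decomposition $S_n = M_n + \mu W_n$, the same appeal to Theorem \ref{LLN} with $s_n = \mathrm{tr}(\langle M\rangle_n) \sim n\eta^2$ from \eqref{final-trace} to conclude $M_n = o(n^\alpha)$ a.s., and the same import from Bercu and Laulin (2019) for the walk part (the paper cites their Remark 3.1 for $W_n=o(n^\alpha)$ in the regime $p<(2d+1)/4d$, i.e. $\gamma<1/2$, rather than the LIL bound you invoke, but both work). One caution: taken literally, the paper's transcription of Theorem \ref{LLN} combined with your (correct) observations that $\langle M\rangle_n$ is diagonal with $\lambda_{\min}(\langle M\rangle_n)\gtrsim n$ and your choice $f(t)=t^2$ would yield $\parallel M_n\parallel^2 = o((\log n)^2)$, which is impossible given the CLT-scale fluctuations of $M_n$ (its trace variance grows linearly); the usable reading of that theorem is $\parallel M_n\parallel^2 = o\big(\lambda_{\max}(\langle M\rangle_n)(\log s_n)^{1+\epsilon}\big)$ --- exactly the paper's \eqref{llnForm} --- which still delivers $M_n = o(n^\alpha)$ for every $\alpha>1/2$, so your conclusion is unaffected, and your extra checks ($\lambda_{\min}$ bounded below, the degenerate case $\eta=0$) are harmless refinements the paper leaves implicit.
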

\begin{proof}
Let us denote $s_n=\text{tr}(\langle M\rangle_n)$. From \eqref{final-trace}, we have $s_\infty=\infty$. Also, 
	\begin{equation}\label{s_n_equiv}
		s_n \sim n\eta^2.
	\end{equation}
For $\epsilon>0$, by using Theorem \ref{LLN}, we have
	\begin{equation}\label{llnForm}
		\frac{\parallel M_n\parallel^2}{\lambda_{\max}(\langle M\rangle_n)}=o\big((\log s_n)^{1+\epsilon}\big) \ \text{a.s.}
	\end{equation}
As $\lambda_{\max}(\langle M\rangle_n)\le s_n$, from \eqref{s_n_equiv} and \eqref{llnForm}, we obtain
	\begin{equation}\label{eta2}
		\parallel M_n\parallel^2 =o\big(n\eta^2 (\log n\eta^2)^{1+\epsilon}\big) \ \text{a.s.}
	\end{equation}
	Further,
	\begin{equation}\label{ord-2alpha}
		\log n\eta^2 \sim \log n \ \text{and} \ n(\log n)^{1+\epsilon}=o(n^{2\alpha}). 
	\end{equation}
	Thus, by using \eqref{ord-2alpha} in \eqref{eta2}, we get $\parallel M_n\parallel^2=o(n^{2\alpha})$ a.s., 
	which implies that
	\begin{equation}\label{penul}
		\lim_{n\to \infty} \frac{M_n}{n^\alpha}=0 \ \text{a.s.}
	\end{equation}
	Recall that $M_n=S_n-\mu W_n$. By using Remark 3.1 of Bercu and Laulin (2019) in \eqref{penul}, we get the required result.
\end{proof}

\begin{theorem}
Let $\alpha>1/2$ and $p=(2d+1)/4d$. Then,
	\begin{equation}\label{lgalpsn}
		\lim_{n\to \infty}\frac{S_n}{\sqrt{n}(\log n)^\alpha}= 0 \ \text{a.s.}
	\end{equation}
\end{theorem}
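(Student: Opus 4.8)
The plan is to reproduce, essentially verbatim, the martingale estimate used in the preceding theorem and to supply the critical-regime control of the underlying position $W_n$ as the only genuinely new ingredient. First I would record that the trace formula \eqref{final-trace} gives $\text{tr}(\langle M\rangle_n)=\eta_1^2+(\mu-\mu_1)^2+(n-1)\eta^2\sim n\eta^2$, so that $s_\infty=\infty$, and I would stress that this is independent of $p$. Applying Theorem \ref{LLN} with the admissible choice $f(t)=t^{1+\epsilon}$ (for which $\int_1^\infty \mathrm{d}t/f(t)<\infty$ whenever $\epsilon>0$) together with the bound $\lambda_{\max}(\langle M\rangle_n)\le \text{tr}(\langle M\rangle_n)$, exactly as in the previous theorem, yields
\begin{equation*}
\|M_n\|^2=o\big(n(\log n)^{1+\epsilon}\big)\ \text{a.s.}
\end{equation*}
for each fixed $\epsilon>0$. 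Since this bound uses $p$ only through $\lambda_{\max}(\langle M\rangle_n)\le \text{tr}(\langle M\rangle_n)=n\eta^2$, which is $p$-free, it is available unchanged at the critical value $p=(2d+1)/4d$.

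Next I would exploit the hypothesis $\alpha>1/2$. Choosing any $\epsilon$ with $0<\epsilon<2\alpha-1$ (possible precisely because $\alpha>1/2$) gives $n(\log n)^{1+\epsilon}=o\big(n(\log n)^{2\alpha}\big)$, hence $\|M_n\|^2=o\big(n(\log n)^{2\alpha}\big)$ and therefore
\begin{equation*}
\lim_{n\to\infty}\frac{M_n}{\sqrt{n}\,(\log n)^\alpha}=0\ \text{a.s.}
\end{equation*}
Recalling that $M_n=S_n-\mu W_n$, it then remains to verify that the position $W_n$ of the underlying MERW satisfies $W_n/(\sqrt{n}(\log n)^\alpha)\to 0$ a.s.

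The crux is the behaviour of $W_n$ at $p=(2d+1)/4d$. From \eqref{multstep}--\eqref{A_n+1_stops} with $r=0$ one computes $\mathbb{E}(X_{n+1}\mid X_1,\dots,X_n)=\frac{2dp-1}{2d-1}\,\frac{W_n}{n}$, so the effective memory parameter of the walk is $a=\frac{2dp-1}{2d-1}$; the chosen $p$ is exactly the value $a=1/2$ at which the MERW is critical. In this critical regime the position is of order $\sqrt{n\log n}$, and the critical-regime asymptotics for the MERW (Bercu and Laulin (2019); see also Bercu (2025)) furnish, via the law of the iterated logarithm, $\|W_n\|=O\big(\sqrt{n\log n\,\log\log\log n}\big)$ a.s. Since for $\alpha>1/2$
\begin{equation*}
\frac{\sqrt{n\log n\,\log\log\log n}}{\sqrt{n}\,(\log n)^\alpha}=(\log n)^{1/2-\alpha}\sqrt{\log\log\log n}\longrightarrow 0\quad(n\to\infty),
\end{equation*}
I would conclude $W_n/(\sqrt{n}(\log n)^\alpha)\to 0$ a.s., and combining this with the martingale estimate gives \eqref{lgalpsn}.

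I expect the main obstacle to be this last step rather than the martingale analysis: the estimate on $M_n$ is $p$-independent and routine, so all the real content lies in identifying $p=(2d+1)/4d$ with the critical memory parameter $a=1/2$ and invoking the correct critical growth rate of $W_n$. One must check that the extra logarithmic factor carried by the critical normalisation of $W_n$ (the $\sqrt{n\log n}$ scale, rather than the $\sqrt{n}$ scale of the diffusive regime handled in the previous theorem) is still dominated by $\sqrt{n}(\log n)^\alpha$ precisely when $\alpha>1/2$; this is exactly why the admissible normalisation here is $\sqrt{n}(\log n)^\alpha$ and not $n^\alpha$.
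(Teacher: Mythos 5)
Your proposal is correct and follows essentially the same route as the paper: the $p$-free trace estimate from Theorem \ref{LLN} giving $\parallel M_n\parallel^2=o\big(n(\log n)^{2\alpha}\big)$ a.s., then the decomposition $M_n=S_n-\mu W_n$ combined with the critical-regime control of $W_n$ from Bercu and Laulin (2019). The only cosmetic difference is that the paper cites their Remark 3.3 directly for $W_n/(\sqrt{n}(\log n)^\alpha)\to 0$, whereas you rederive it from the critical LIL at the $\sqrt{n\log n\log\log\log n}$ scale (their Eq. (3.9)), which is the same source and a one-line computation.
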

\begin{proof}
In \eqref{eta2}, we use $\log n\eta^2 \sim \log n$ as $n \to \infty$ to obtain $\parallel M_n\parallel^2=o\big(n(\log n)^{2\alpha}\big)$ a.s. That is,
	\begin{equation}\label{penul1}
		\lim_{n\to \infty}\frac{M_n}{\sqrt{n}(\log n)^\alpha}=0 \ \text{a.s.}
	\end{equation}
By using Remark 3.3 of Bercu and Laulin (2019) in \eqref{penul1}, we get \eqref{lgalpsn}.	
\end{proof}
The next result is discussed in Theorem 3.1 of Zhang (2024) using a recursive stochastic algorithm. Here, we provide an alternate proof.
\begin{theorem}
Let $(2d+1)/4d<p\le 1$ and $\gamma=(2dp-1)/(2d-1)$. Then, 
	\begin{equation*}
		\lim_{n\to \infty}\frac{S_n}{n^\gamma}=\mu S \ \text{a.s.},
	\end{equation*}
	 where $S$ is a non-degenerate random vector.
\end{theorem}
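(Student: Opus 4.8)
The plan is to decompose $S_n = M_n + \mu W_n$ and to show that the martingale part $M_n$ is of strictly smaller order than $n^\gamma$, so that the asymptotics of $S_n/n^\gamma$ are dictated entirely by the superdiffusive behaviour of the underlying walk $\{W_n\}$. The first thing I would record is that the hypothesis $(2d+1)/4d < p \le 1$ is exactly equivalent to $\gamma = (2dp-1)/(2d-1) > 1/2$: indeed, $\gamma > 1/2 \iff 2(2dp-1) > 2d-1 \iff 4dp > 2d+1$. This places $\{W_n\}$ in its superdiffusive regime and is what makes the martingale fluctuation negligible.

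For the martingale contribution I would reuse the estimates already in place for $\{M_n,\mathcal{F}_n\}_{n\ge1}$. From \eqref{final-trace} we have $s_n=\text{tr}(\langle M\rangle_n)\sim n\eta^2$, so $s_\infty=\infty$, and the argument leading to \eqref{eta2} (applying Theorem \ref{LLN} with $f(t)=t^{1+\epsilon}$, which satisfies $\int_1^\infty \mathrm{d}t/f(t)<\infty$) together with \eqref{s_n_equiv} yields
\begin{equation*}
	\parallel M_n\parallel^2 = o\big(n(\log n)^{1+\epsilon}\big) \ \text{a.s.}
\end{equation*}
Since $\gamma>1/2$ we have $2\gamma>1$, so $n(\log n)^{1+\epsilon}=o(n^{2\gamma})$, whence $\parallel M_n\parallel^2=o(n^{2\gamma})$ a.s., and therefore
\begin{equation*}
	\lim_{n\to\infty}\frac{M_n}{n^\gamma}=0 \ \text{a.s.}
\end{equation*}

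Next I would invoke the strong law for the MERW in the superdiffusive regime established by Bercu and Laulin (2019): for $\gamma>1/2$ the normalized position $W_n/n^\gamma$ converges almost surely to a non-degenerate random vector $S$. Combining this with the previous display and the identity $S_n/n^\gamma = M_n/n^\gamma + \mu\, W_n/n^\gamma$ gives $\lim_{n\to\infty} S_n/n^\gamma = \mu S$ a.s., which is the asserted conclusion, with the non-degeneracy of $S$ inherited directly from the Bercu--Laulin limit.

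I do not expect a genuine obstacle here: the martingale estimate is routine and formally identical to the one used in the diffusive and critical cases, the only change being that now $\gamma>1/2$, so the $\sqrt{n}$-order fluctuation of $M_n$ is negligible against $n^\gamma$. The substantive content of the statement — in particular the existence and non-degeneracy of the limit $S$ — is imported from the superdiffusive strong law for $W_n$. Accordingly, the one point I would verify with care is precisely the equivalence between the parameter condition $(2d+1)/4d < p$ and the inequality $\gamma>1/2$, which is what legitimizes both the negligibility of $M_n/n^\gamma$ and the applicability of the cited convergence result for $W_n$.
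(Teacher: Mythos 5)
Your proposal is correct and follows essentially the same route as the paper: both decompose $S_n=M_n+\mu W_n$, observe that the martingale estimate $\lim_{n\to\infty}M_n/n^{\alpha}=0$ a.s.\ for $\alpha>1/2$ holds for all $0\le p\le 1$ (so it applies with $\alpha=\gamma$ once one checks $p>(2d+1)/4d \iff \gamma>1/2$), and then import the superdiffusive strong law $W_n/n^{\gamma}\to S$ a.s.\ from Theorem 3.7 of Bercu and Laulin (2019). Your explicit verification of the equivalence $4dp>2d+1\iff\gamma>1/2$ is a detail the paper states without computation, but otherwise the arguments coincide.
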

\begin{proof}
Note that \eqref{penul} holds for all $\alpha>1/2$ and $0\le p\le 1$.
Also, $p>(2d+1)/4d$ iff $\gamma>1/2$. Thus, from \eqref{penul}, we have
	\begin{equation}\label{penul2}
		\lim_{n\to \infty}\frac{1}{n^\gamma}(S_n-\mu W_n)=0 \ \text{a.s.}
	\end{equation}
By using Theorem 3.7 of Bercu and Laulin (2019), we get
	\begin{equation}\label{thm3.7B}
		\lim_{n\to \infty}\frac{W_n}{n^\gamma}=S \ \text{a.s.},
	\end{equation}
	where $S$ is a non-degenerate random vector.
	Finally, from \eqref{penul2} and \eqref{thm3.7B}, the required result follows.
\end{proof}	
\subsection{Quadratic strong law}\label{qslstep}
In order to derive the result related to QSL, we consider the inner product  $\langle u,v\rangle=u^t v, \ u\in\mathbb{R}^d, \, v\in\mathbb{R}^d.$
So, $|\langle u,v\rangle|^2=\langle u,v\rangle^2=u^tvv^tu.$

For $n\ge 1$, let $\epsilon_n=\Delta M_n$ be the martingale difference given in \eqref{Delta_Mn}. For any vector $u\in\mathbb{R}^d$, let $M_n(u)=\langle u,M_n\rangle=u^t M_n$ and $\epsilon_n(u)=\langle u,\epsilon_n\rangle=u^t\epsilon_n$.
Then, $\{M_n(u),\mathcal{F}_n\}_{n\ge 1}$ is a real martingale. 
	
For $u\in\mathbb{R}^d$, we have
	\begin{equation}\label{eps_uPow2}
	\mathbb{E}(|\epsilon_{n+1}(u)|^2|\mathcal{F}_n)
	=\mathbb{E}(u^t\epsilon_{n+1}\epsilon_{n+1}^tu|\mathcal{F}_n)=u^t \mathbb{E}(\epsilon_{n+1}\epsilon_{n+1}^t|\mathcal{F}_n)u.
\end{equation}
From \eqref{DeltaM_k+1}, we get
\begin{align}\label{eps_eps^t}
	\mathbb{E}(\epsilon_{n+1}\epsilon_{n+1}^t|\mathcal{F}_n)
	&=\mathbb{E}((Y_{n+1}-\mu)^2)\,\mathbb{E}(A_{n+1}X_{\beta(n)}X_{\beta(n)}^t A_{n+1}^t|\mathcal{F}_n)\nonumber\\
	&=\eta^2 \mathbb{E}(X_{n+1}X_{n+1}^t|\mathcal{F}_n),
\end{align}
where the last step follows from \eqref{multstep}.
	
	From Eq. (4.5) of Bercu and Laulin (2019), we have 
	\begin{equation}\label{XnXn^t}
		\mathbb{E}(X_{n+1}X_{n+1}^t|\mathcal{F}_n)=\frac{\gamma}{n}\Sigma_n + \frac{1-\gamma}{d}I_d,
	\end{equation}
	where $\gamma=(2dp-1)/(2d-1)$, $\Sigma_n= \sum_{i=1}^{d}\sum_{k=1}^{n}\mathbb{I}_{\{X_k^{(i)}\ne 0\}} e_i e_i^t$ and $X_k^{(i)}$ is the $i$-th coordinate of $X_k$.
	
	Also, from  Eq. (5.3) of Bercu and Laulin (2019), we have
	\begin{equation}\label{Sigma_nConv}
		\lim_{n\to \infty}\frac{\Sigma_n}{n}=\frac{I_d}{d} \ \text{a.s.}
	\end{equation}
	By using \eqref{XnXn^t} and \eqref{Sigma_nConv} in \eqref{eps_eps^t}, we get
	\begin{equation}\label{lim_u}
		\lim_{n\to \infty}\mathbb{E}(\epsilon_{n+1}\epsilon_{n+1}^t|\mathcal{F}_n)=\frac{\eta^2}{d}I_d \ \text{a.s.}
	\end{equation}
	Thus, from \eqref{eps_uPow2} and \eqref{lim_u}, we obtain
	\begin{equation}\label{limPow2}
		\lim_{n\to \infty}\mathbb{E}(|\epsilon_{n+1}(u)|^2|\mathcal{F}_n)= \frac{\eta^2}{d}\parallel u\parallel^2 \ \text{a.s.}
	\end{equation}
	By using the Cauchy-Schwarz inequality and \eqref{DeltaM_k+1}, we have
	\begin{align}\label{pow_3_be}
		\mathbb{E}(|\epsilon_{n+1}(u)|^3|\mathcal{F}_n)
		&\le\parallel u\parallel^3 \mathbb{E}(|Y_{n+1}-\mu|^3 (X_{\beta(n)}^t A_{n+1}^t A_{n+1}X_{\beta(n)})^{3/2}|\mathcal{F}_n)\nonumber\\
		&=\parallel u\parallel^3 \mathbb{E}(|Y_{n+1}-\mu|^3) \mathbb{E}((X_{\beta(n)}^t A_{n+1}^t A_{n+1}X_{\beta(n)})^{3/2}|\mathcal{F}_n),
	\end{align}
	where the last step follows on using the independence of $Y_{n+1}$ with $\beta(n)$, $A_{n+1}$ and $\{X_1,X_2$, $\dots,X_n\}$.
	
	For any $G\in\mathcal{F}_n$, we have
	\begin{align}
	\int_G\mathbb{E}((X_{\beta(n)}^t A_{n+1}^t &A_{n+1}X_{\beta(n)})^{3/2}|\mathcal{F}_n)\,\mathrm{d}\mathbb{P}\nonumber\\
	&=\mathbb{E}((X_{\beta(n)}^t A_{n+1}^t A_{n+1} X_{\beta(n)})^{3/2}\mathbb{I}_G)\nonumber\\
	&=\sum_{k=1}^{n}\mathbb{P}\{\beta(n)=k\}\mathbb{E}((X_{\beta(n)}^t A_{n+1}^t A_{n+1} X_{\beta(n)})^{3/2}\mathbb{I}_G|\beta(n)=k)\nonumber\\
	&= \frac{1}{n}\sum_{k=1}^{n}\mathbb{E}((X_{k}^t A_{n+1}^t A_{n+1} X_k)^{3/2}\mathbb{I}_G)\label{indep3}\\
	&=\frac{1}{n}\sum_{k=1}^{n}\mathbb{P}\{A_{n+1}=I_d\}\mathbb{E}((X_{k}^t A_{n+1}^t A_{n+1} X_k)^{3/2} \mathbb{I}_G|A_{n+1}=I_d)\nonumber\\
	&\ \ +\frac{1}{n}\sum_{k=1}^{n}\mathbb{P}\{A_{n+1}=-I_d\}\mathbb{E}((X_{k}^t A_{n+1}^t A_{n+1} X_k)^{3/2} \mathbb{I}_G|A_{n+1}=-I_d)\nonumber\\
		&\ \ +\frac{1}{n}\sum_{k=1}^{n}\sum_{i=1}^{d-1}\mathbb{P}\{A_{n+1}=J_d^i\}\mathbb{E}((X_{k}^t A_{n+1}^t A_{n+1} X_k)^{3/2} \mathbb{I}_G| A_{n+1}=J_d^i)\nonumber\\
		&\ \ +\frac{1}{n}\sum_{k=1}^{n}\sum_{i=1}^{d-1}\mathbb{P}\{A_{n+1}=-J_d^i\}\mathbb{E}((X_{k}^t A_{n+1}^t A_{n+1} X_k)^{3/2} \mathbb{I}_G|A_{n+1}= -J_d^i)\nonumber\\
		&=\frac{1}{n}\sum_{k=1}^{n}\Big(p+(2d-1)\frac{1-p}{2d-1}\Big)\mathbb{E}(\parallel X_k\parallel^3\mathbb{I}_G)\nonumber\\
		&=\frac{1}{n}\sum_{k=1}^{n}\int_G \mathrm{d}\mathbb{P},\label{last1}
	\end{align}
	where \eqref{indep3} follows on using the independence of $\beta(n)$ with $A_{n+1}$ and $\{X_1,X_2,\dots,X_n\}$. Moreover, the independence of $A_{n+1}$ and $\{X_1,X_2,\dots,X_n\}$ is used to get the penultimate step. Thus, from \eqref{last1}, we get
	\begin{equation}\label{pow_3/2}
		\mathbb{E}((X_{\beta(n)}^t A_{n+1}^t A_{n+1}X_{\beta(n)})^{3/2}|\mathcal{F}_n)=1, \ n\ge 1.
	\end{equation}
	By substituting \eqref{pow_3/2} in \eqref{pow_3_be}, we obtain
	\begin{equation}\label{pow_3}
		\mathbb{E}(|\epsilon_{n+1}(u)|^3|\mathcal{F}_n)\le \parallel u\parallel^3 \mathbb{E}(|Y_{n+1}-\mu|^3), \ n\ge 1.
	\end{equation}
	Also, we have
	\begin{equation}\label{eps1Pow3}
		\mathbb{E}(|\epsilon_{1}(u)|^3|\mathcal{F}_0)
		\le \parallel u\parallel^3 \mathbb{E}(|Y_1-\mu|^3).
	\end{equation}
	The proof of the following lemma follows from \eqref{pow_3} and \eqref{eps1Pow3}.
	\begin{lemma}\label{sup_epsPow3}
		Let the random step sizes $\{Y_n\}_{n\ge 1}$ in MERW $\{S_n\}_{n\ge1}$ be such that their third absolute moments about $\mu$ are finite, that is, $\mathbb{E}(|Y_n-\mu|^3)<\infty$, for all $n\ge1$. Then, $\sup_{n\ge 0}\mathbb{E}(|\epsilon_{n+1}(u)|^3|\mathcal{F}_n)<\infty$.
	\end{lemma}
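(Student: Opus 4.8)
The plan is to establish the uniform bound by separating the contribution of the initial step from those of the subsequent steps, since the first step size $Y_1$ is governed by a law with mean $\mu_1$ while the remaining sizes $\{Y_n\}_{n\ge 2}$ are iid with mean $\mu$. The two pointwise estimates \eqref{pow_3} and \eqref{eps1Pow3} already do the heavy lifting, so the argument reduces to checking that their right-hand sides are controlled uniformly in $n$.

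First I would treat the range $n\ge 1$. Here \eqref{pow_3} gives $\mathbb{E}(|\epsilon_{n+1}(u)|^3|\mathcal{F}_n)\le \|u\|^3\,\mathbb{E}(|Y_{n+1}-\mu|^3)$. Since $\{Y_n\}_{n\ge 2}$ are identically distributed, the factor $\mathbb{E}(|Y_{n+1}-\mu|^3)$ equals the single constant $\rho:=\mathbb{E}(|Y_2-\mu|^3)$ for every $n\ge 1$, and $\rho<\infty$ by hypothesis. Thus the bound $\|u\|^3\rho$ is independent of $n$ and holds across the entire tail $n\ge 1$. For the remaining index $n=0$, I would invoke \eqref{eps1Pow3}, which yields $\mathbb{E}(|\epsilon_{1}(u)|^3|\mathcal{F}_0)\le \|u\|^3\,\mathbb{E}(|Y_1-\mu|^3)$, a finite quantity by the hypothesis applied to $Y_1$.

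Combining the two cases, I would conclude
\[
\sup_{n\ge 0}\mathbb{E}(|\epsilon_{n+1}(u)|^3|\mathcal{F}_n)\le \|u\|^3\max\{\mathbb{E}(|Y_1-\mu|^3),\,\rho\}<\infty,
\]
which is the claim. The only point demanding attention---rather than a genuine obstacle---is that the first step does not share the iid structure of the later steps, so its contribution must be captured through the dedicated estimate \eqref{eps1Pow3}; it is precisely the identical distribution of $\{Y_n\}_{n\ge 2}$ that upgrades the pointwise bound \eqref{pow_3} into a supremum bound over $n\ge 1$. No further analytic work is required.
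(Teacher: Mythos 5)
Your proposal is correct and coincides with the paper's own argument: the paper likewise derives the lemma directly from the two pointwise estimates \eqref{pow_3} and \eqref{eps1Pow3}, with the iid assumption on $\{Y_n\}_{n\ge 2}$ collapsing the bounds for $n\ge 1$ to a single constant and the case $n=0$ handled separately via the hypothesis applied to $Y_1$. Your write-up merely makes explicit the uniformity-in-$n$ check that the paper leaves implicit.
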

	\begin{theorem}
		Under the assumptions of Lemma
		\ref{sup_epsPow3}, the following result holds:
		\begin{equation*}
			\lim_{n\to\infty}\frac{1}{\log (n+1)}\sum_{k=1}^{n} \frac{(S_k-\mu W_k) (S_k^t-\mu W_k^t)}{k(k+1)}=\frac{\eta^2}{d} \ \text{a.s.}
		\end{equation*}
	\end{theorem}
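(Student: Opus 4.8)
The plan is to reduce the matrix-valued statement to a family of scalar quadratic strong laws, one for each test vector $u\in\mathbb{R}^d$, and then to reassemble the matrix limit via Proposition \ref{quadform}. Recall $M_n=S_n-\mu W_n$, so the asserted identity is the convergence of the positive semi-definite symmetric matrices
\begin{equation*}
A_n=\frac{1}{\log(n+1)}\sum_{k=1}^{n}\frac{M_kM_k^t}{k(k+1)}
\end{equation*}
to $\tfrac{\eta^2}{d}I_d$ a.s. Since $q_{A_n}(u)=u^tA_nu=\frac{1}{\log(n+1)}\sum_{k=1}^{n}M_k(u)^2/(k(k+1))$ with $M_k(u)=u^tM_k$, it suffices to prove $q_{A_n}(u)\to\frac{\eta^2}{d}\|u\|^2$ a.s.\ for every fixed $u$ and then to invoke Proposition \ref{quadform}.

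First I would fix $u\in\mathbb{R}^d$ and apply Theorem \ref{quadLaw} to the real martingale differences $\epsilon_k(u)=u^t\epsilon_k$ with the choice $\Phi_n=1$ for all $n\ge 0$. Then $v_n=\sum_{k=0}^{n}\Phi_k^2=n+1$, so $v_\infty=\infty$ and the explosion coefficient $f_n=\Phi_n^2/v_n=1/(n+1)\to 0$ gives condition (iii); the associated transform is $H_n=\sum_{k=1}^{n}\Phi_{k-1}\epsilon_k(u)=M_n(u)$. Condition (ii) holds with $m=1$ and exponent $a=3>2$ by Lemma \ref{sup_epsPow3}, and the limiting conditional variance is $\sigma^2=\eta^2\|u\|^2/d$ by \eqref{limPow2}. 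With $m=1$ the constant on the right-hand side of Theorem \ref{quadLaw} is $\tfrac{2!\,\sigma^2}{1!\,2}=\sigma^2$, and since $v_{k-1}=k$ and $f_k/v_{k-1}=1/(k(k+1))$, the scalar law reads
\begin{equation*}
\lim_{n\to\infty}\frac{1}{\log(n+1)}\sum_{k=1}^{n}\frac{M_k(u)^2}{k(k+1)}=\frac{\eta^2}{d}\|u\|^2 \ \text{a.s.}
\end{equation*}

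Next I would pass from these scalar laws to the matrix statement. The displayed limit says $q_{A_n}(u)\to\frac{\eta^2}{d}\|u\|^2=q_{(\eta^2/d)I_d}(u)$ a.s.\ for each $u\in\mathbb{R}^d$. Because every $A_n$ is real symmetric, hence Hermitian, for $u\in\mathbb{C}^d$ one has $q_{A_n}(u)=u^*A_nu=q_{A_n}(\mathfrak{R}(u))+q_{A_n}(\mathfrak{I}(u))$ (the imaginary cross-terms cancel by symmetry), so the convergence extends to all $u\in\mathbb{C}^d$ with limit $q_{(\eta^2/d)I_d}(u)$. In practice only finitely many test vectors are needed---say $e_i$ and $e_i+e_j$---to recover every entry of $A_n$, so intersecting these finitely many probability-one events furnishes a single event on which the hypothesis of Proposition \ref{quadform} holds; the proposition then yields $\lim_{n\to\infty}A_n=\frac{\eta^2}{d}I_d$ a.s., which is the claim after writing $M_k=S_k-\mu W_k$.

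The hard part will be the application of Theorem \ref{quadLaw} itself. Its hypothesis (i) requires the conditional variance to equal a deterministic constant $\sigma^2$ exactly, whereas \eqref{limPow2} only provides the a.s.\ convergence $\mathbb{E}(|\epsilon_{n+1}(u)|^2\mid\mathcal{F}_n)\to\eta^2\|u\|^2/d$; indeed these conditional variances are genuinely non-constant through their dependence on $\Sigma_n$ via \eqref{XnXn^t}. I would therefore invoke the quadratic strong law in the form that only demands a.s.\ convergence of the conditional variance to $\sigma^2$ (the general version underlying the cited result), checking that the third-moment control of Lemma \ref{sup_epsPow3} together with $f_n\to 0$ is exactly what that version needs. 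Managing this relaxation---rather than the essentially mechanical polarization and reassembly steps---is the genuinely delicate point of the argument.
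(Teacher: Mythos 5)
Your proposal follows the paper's proof exactly: apply Theorem \ref{quadLaw} to the scalar martingale transforms $M_n(u)=u^tM_n$ with $\Phi_n=1$, $v_n=n+1$, $f_n=1/(n+1)$ and $m=1$ (using \eqref{limPow2} for the limiting conditional variance and Lemma \ref{sup_epsPow3} for the moment condition), and then recover the matrix limit $\frac{\eta^2}{d}I_d$ via Proposition \ref{quadform}. Your remark that condition (i) of Theorem \ref{quadLaw} literally demands a constant conditional variance, whereas \eqref{limPow2} only gives a.s.\ convergence $\mathbb{E}(|\epsilon_{n+1}(u)|^2\mid\mathcal{F}_n)\to\eta^2\|u\|^2/d$ (the dependence on $\Sigma_n$ in \eqref{XnXn^t} makes it genuinely non-constant), identifies a point the paper passes over silently, and your resolution---invoking the version of the quadratic strong law valid under a.s.\ convergent conditional variances---is if anything more careful than the published argument.
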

	
	\begin{proof}
		For $u\in\mathbb{R}^d$,	by using \eqref{limPow2}, Lemma \ref{sup_epsPow3} and Theorem \ref{quadLaw}, we obtain
		\begin{equation*}
			\lim_{n\to\infty}\frac{1}{\log (n+1)}\sum_{k=1}^{n} \frac{1}{k(k+1)}(M_k(u))^2=\frac{\eta^2}{d}\parallel u\parallel^2 \ \text{a.s.}
		\end{equation*}
		Equivalently,
		\begin{equation}\label{penul6}
			\lim_{n\to\infty}\frac{1}{\log (n+1)}\sum_{k=1}^{n} \frac{1}{k(k+1)}u^t M_k M_k^t u=\frac{\eta^2}{d}\parallel u\parallel^2 \ \text{a.s.}
		\end{equation}
		Finally, by using \eqref{penul6} and Proposition \ref{quadform}, we get the required result.	
	\end{proof}
	
	\subsection{Law of iterated logarithm}
	In order to derive the LIL-type results, we continue with the same inner product as considered in Section \ref{qslstep}. Recall that $\epsilon_n=\Delta M_n$, $M_n(u)=\langle u,M_n\rangle=u^t M_n$ and $\epsilon_n(u)=\langle u,\epsilon_n\rangle=u^t\epsilon_n$ for all $n\ge 1$, $u\in\mathbb{R}^d$.
	Also, $\{M_n(u),\mathcal{F}_n\}_{n\ge 1}$ is a real martingale.
	
	By using Cauchy-Schwarz inequality, we get
	\begin{equation}\label{Cau_Sch}
		|\epsilon_n(u)|=|\langle u,\epsilon_n\rangle|\le \parallel u\parallel \parallel \epsilon_n\parallel, \ n\ge 1.
	\end{equation}
	From \eqref{Cau_Sch}, we have 
	\begin{align}\label{pow_2}
		\mathbb{E}(|\epsilon_{n+1}(u)|^2|\mathcal{F}_n)&\le \parallel u\parallel^2 \mathbb{E}(\parallel \epsilon_{n+1}\parallel^2|\mathcal{F}_n)\nonumber \\
		&= \eta^2 \parallel u\parallel^2, \ n\ge 1,
	\end{align}
	where the last step follows on using \eqref{M2onwards}.

	\begin{lemma}\label{lil_Mn(u)}
		Let the random step sizes $\{Y_n\}_{n\ge 1}$ in MERW $\{S_n\}_{n\ge1}$ be such that their third absolute moments about $\mu$ are finite, that is, $\mathbb{E}(|Y_n-\mu|^3)<\infty$, for all $n\ge1$. Then, we have \vspace{.1cm}\\
		\noindent{(i)} $\limsup_{n\to\infty}\frac{|M_n(u)|}{\sqrt{2n\log\log n}}\le \eta \parallel u\parallel \ \text{a.s.}$\vspace{.1cm}\\
		\noindent{(ii)} $\limsup_{n\to\infty}\frac{\parallel M_n\parallel}{\sqrt{2n\log\log n}}\le \eta \sqrt{d} \ \text{a.s.}$
\end{lemma}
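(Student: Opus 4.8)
The plan is to derive part (i) by a direct application of the martingale law of iterated logarithm, Theorem \ref{LIL}, to the real martingale $\{M_n(u),\mathcal{F}_n\}_{n\ge 1}$, and then to obtain part (ii) by summing the coordinate-wise bounds from part (i).

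For part (i), I would work with the scalar martingale differences $\epsilon_k(u)=u^t\epsilon_k$, so that $M_n(u)=\sum_{k=1}^{n}\epsilon_k(u)$. In the notation of Theorem \ref{LIL} I take $\Phi_n=U_n=1$ for all $n$, the exponent $\alpha=1/2$, and $\sigma=\eta\parallel u\parallel$. The required hypotheses are verified as follows: the identity $\mathbb{E}(\epsilon_{n+1}(u)|\mathcal{F}_n)=0$ holds because $\{M_n(u)\}$ is a martingale; the conditional second-moment bound $\mathbb{E}(|\epsilon_{n+1}(u)|^2|\mathcal{F}_n)\le\eta^2\parallel u\parallel^2$ is exactly \eqref{pow_2}, giving $\sigma^2=\eta^2\parallel u\parallel^2$; and the conditional third-moment bound in condition (ii) (with $2+2\alpha=3$) is furnished by Lemma \ref{sup_epsPow3}. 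With $\Phi_n=U_n=1$ one has $\tau_n=\sum_{k=0}^{n}U_k^2=n+1$, so $\tau_\infty=\infty$ and $\sum_{n=0}^{\infty}U_n^{3}\tau_n^{-3/2}=\sum_{n=0}^{\infty}(n+1)^{-3/2}<\infty$. Theorem \ref{LIL} then yields
\begin{equation*}
\limsup_{n\to\infty}\frac{|M_n(u)|}{\sqrt{2\tau_{n-1}\log\log\tau_{n-1}}}\le\eta\parallel u\parallel \ \text{a.s.},
\end{equation*}
and since $\tau_{n-1}=n$ this is precisely part (i).

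For part (ii), I would apply part (i) with $u=e_i$ for each $i=1,2,\dots,d$; since $\parallel e_i\parallel=1$, this gives $\limsup_{n\to\infty}(M_n(e_i))^2/(2n\log\log n)\le\eta^2$ a.s.\ for every $i$. Because $M_n(e_i)=e_i^t M_n$ is the $i$-th coordinate of $M_n$, we have $\parallel M_n\parallel^2=\sum_{i=1}^{d}(M_n(e_i))^2$. Intersecting the $d$ almost-sure events and using that the limit superior of a finite sum is at most the sum of the limit superiors, I obtain
\begin{equation*}
\limsup_{n\to\infty}\frac{\parallel M_n\parallel^2}{2n\log\log n}\le\sum_{i=1}^{d}\limsup_{n\to\infty}\frac{(M_n(e_i))^2}{2n\log\log n}\le d\eta^2 \ \text{a.s.},
\end{equation*}
and taking square roots yields part (ii).

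Since every ingredient is already in place, there is no genuine analytic obstacle; the only point requiring a little care is the passage from the coordinates to the norm in part (ii), where one must note that the subadditivity of the limit superior is applied on the intersection of the $d$ full-measure sets coming from part (i), which is again of full measure, so the final bound holds almost surely.
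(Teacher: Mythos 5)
Your proposal is correct and follows essentially the same route as the paper: both parts apply Theorem \ref{LIL} to the scalar martingale $\{M_n(u)\}$ with $\Phi_n=U_n=1$, using \eqref{pow_2} for the conditional variance bound and the third-moment bound of Lemma \ref{sup_epsPow3} (the paper cites \eqref{pow_3} and \eqref{eps1Pow3} directly), and part (ii) is obtained exactly as in the paper by taking $u=e_i$, invoking $\parallel M_n\parallel^2=\sum_{i=1}^{d}\langle e_i,M_n\rangle^2$ and the subadditivity of the limit superior on the intersection of the $d$ full-measure events.
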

\begin{proof}
		Let $\Delta M_n(u)=M_n(u)-M_{n-1}(u)$, $n\ge1$ with $M_0(u)=0$. Then, $\epsilon_n(u)=\Delta M_n(u)$ and
		\begin{equation}\label{pow_1}
			\mathbb{E}(\epsilon_{n+1}(u)|\mathcal{F}_n)=0.
		\end{equation}
		Also, we have
		\begin{equation}\label{pow_01}
			\mathbb{E}(\epsilon_{1}(u)|\mathcal{F}_0)=0.
		\end{equation}		
		Note that the first two conditions of Theorem \ref{LIL} follow from \eqref{pow_3}, \eqref{eps1Pow3}, \eqref{pow_2}, \eqref{pow_1} and \eqref{pow_01}.
		
		Also, we have $M_n(u)=\sum_{k=1}^{n}\epsilon_k(u)$. To apply Theorem \ref{LIL}, we need to verify its remaining hypothesis. We take $\Phi_n=U_n=1$ for all $n\ge 0$. So, $\tau_n=n+1$, and therefore $\tau_\infty=\infty$.
		Moreover,
		\begin{equation}
			\sum_{n=0}^{\infty}U_n^3 \tau_n^{-3/2}=\sum_{n=0}^{\infty}\frac{1}{(n+1)^{3/2}}<\infty.
		\end{equation}
		Thus, the first part of the result is an immediate consequence of Theorem \ref{LIL}.
	
For the second part, we proceed as follows: From Lemma \ref{lil_Mn(u)}(i), we have
\begin{equation}\label{foru}
	\limsup_{n\to\infty}\frac{(M_n(u))^2}{2n\log\log n}\le \eta^2 \parallel u\parallel^2 \ \text{a.s.}
\end{equation}
On taking $u=e_i$, $1\le i\le d$ in \eqref{foru}, we get
\begin{equation}\label{fore_i}
	\limsup_{n\to\infty}\frac{\langle e_i,M_n\rangle^2}{2n\log\log n}\le \eta^2 \ \text{a.s.}
\end{equation}
As $\{e_1,e_2,\dots,e_d\}$ is an orthonormal basis of $\mathbb{R}^d$, it follows from Parseval's identity that
\begin{equation}\label{Parseval}
	\parallel M_n\parallel^2=\sum_{i=1}^{d}\langle e_i,M_n\rangle^2.
\end{equation}
From \eqref{Parseval}, we have
\begin{equation}\label{penul3}
	\limsup_{n\to\infty}\frac{\parallel M_n\parallel^2}{2n\log \log n}\le \sum_{i=1}^{d}\limsup_{n\to\infty}\frac{\langle e_i,M_n\rangle^2}{2n\log\log n} \ \text{a.s.}
\end{equation}
Finally, on using \eqref{fore_i} in \eqref{penul3}, we obtain
\begin{equation*}
	\limsup_{n\to\infty}\frac{\parallel M_n\parallel^2}{2n\log \log n}\le d\eta^2 \ \text{a.s.}
\end{equation*}
This completes the proof.	
\end{proof}
\begin{theorem}\label{zh22}
Under the assumptions of Lemma \ref{lil_Mn(u)}, the following results hold true for $\{S_n\}_{n\ge1}$:\vspace{0.1cm}\\
	\noindent {(i)} For $0\le p< (2d+1)/4d$, we have
	\begin{equation*}
		\limsup_{n\to\infty}\frac{\parallel S_n\parallel}{\sqrt{2n\log\log n}}\le \eta \sqrt{d}+\sqrt{\frac{\mu^2(2d-1)}{1+2d(1-2p)}} \ \text{a.s.}
	\end{equation*}
	\noindent {(ii)} For $p= (2d+1)/4d$, we have
	\begin{equation*}
		\limsup_{n\to\infty}\frac{\parallel S_n\parallel}{\sqrt{2n\log n \log\log\log n}}\le |\mu| \ \text{a.s.}
	\end{equation*}
\end{theorem}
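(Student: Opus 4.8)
The plan is to decompose the walk via the martingale $M_n=S_n-\mu W_n$ and the underlying MERW position $W_n=\sum_{k=1}^n X_k$, and then to import a law of the iterated logarithm for each piece separately. The single structural input is the triangle inequality
\begin{equation*}
	\parallel S_n\parallel=\parallel M_n+\mu W_n\parallel\le \parallel M_n\parallel+|\mu|\parallel W_n\parallel, \ n\ge 1.
\end{equation*}
Dividing by the appropriate normalization and taking $\limsup$, using that $\limsup(a_n+b_n)\le\limsup a_n+\limsup b_n$ whenever both limits are finite, reduces each claim to bounding the two terms on the right. Since the present model reduces to the MERW (without stops) upon setting $r=0$, the results of Bercu and Laulin (2019) apply verbatim to $W_n$.

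For part (i), I would first invoke Lemma \ref{lil_Mn(u)}(ii), which gives $\limsup_{n\to\infty}\parallel M_n\parallel/\sqrt{2n\log\log n}\le \eta\sqrt{d}$ a.s. Since $0\le p<(2d+1)/4d$ places the MERW in its diffusive regime, I would then apply the diffusive-regime LIL for the multidimensional elephant random walk of Bercu and Laulin (2019) to obtain
\begin{equation*}
	|\mu|\limsup_{n\to\infty}\frac{\parallel W_n\parallel}{\sqrt{2n\log\log n}}\le \sqrt{\frac{\mu^2(2d-1)}{1+2d(1-2p)}} \ \text{a.s.}
\end{equation*}
Adding the two bounds yields (i). Note that the denominator $1+2d(1-2p)$ is strictly positive exactly on this diffusive range and degenerates to $0$ at the critical value $p=(2d+1)/4d$, which is precisely why the normalization must change in (ii).

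For part (ii), at $p=(2d+1)/4d$ one computes $\gamma=(2dp-1)/(2d-1)=1/2$, the critical exponent. The point is that the martingale contribution is now negligible: from Lemma \ref{lil_Mn(u)}(ii) we have $\parallel M_n\parallel=O(\sqrt{n\log\log n})$ a.s., and since
\begin{equation*}
	\frac{\sqrt{n\log\log n}}{\sqrt{2n\log n\log\log\log n}}\longrightarrow 0 \ \text{as} \ n\to\infty,
\end{equation*}
it follows that $\parallel M_n\parallel/\sqrt{2n\log n\log\log\log n}\to 0$ a.s. The surviving term is controlled by the critical-regime LIL for the MERW of Bercu and Laulin (2019), which supplies $\limsup_{n\to\infty}\parallel W_n\parallel/\sqrt{2n\log n\log\log\log n}\le 1$ a.s.; multiplying by $|\mu|$ and combining with the vanishing $M_n$-term gives (ii).

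The triangle inequality and the sub-additivity of $\limsup$ are routine; the real work is in correctly importing the two regimes of the MERW law of the iterated logarithm from Bercu and Laulin (2019) and matching their constants, so that the diffusive bound produces exactly the factor $\sqrt{(2d-1)/(1+2d(1-2p))}$ and the critical bound produces the constant $1$ under the $\sqrt{2n\log n\log\log\log n}$ scaling. A minor point requiring care in (i) is confirming that both $\limsup$ terms are simultaneously finite on a common almost sure event, so that the sub-additivity step is legitimate.
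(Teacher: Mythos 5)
Your proposal is correct and follows essentially the same route as the paper: the triangle-inequality decomposition $\parallel S_n\parallel\le\parallel M_n\parallel+|\mu|\parallel W_n\parallel$ with sub-additivity of $\limsup$, Lemma \ref{lil_Mn(u)}(ii) for the martingale term, and the diffusive and critical LILs for $W_n$ from Bercu and Laulin (2019) (their Eqs.\ (3.4) and (3.9)). Your observation in (ii) that the $\sqrt{2n\log\log n}$ bound on $\parallel M_n\parallel$ forces that term to vanish under the stronger $\sqrt{2n\log n\log\log\log n}$ normalization is exactly the paper's step \eqref{zero}.
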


\begin{proof}
To prove the first part, we recall that $M_n=S_n-\mu W_n$. So,
	\begin{equation}\label{penul4}
		\limsup_{n\to\infty}\frac{\parallel S_n\parallel}{\sqrt{2n\log\log n}}\le \limsup_{n\to\infty}\frac{\parallel M_n\parallel}{\sqrt{2n\log\log n}} + |\mu| \limsup_{n\to\infty}\frac{\parallel W_n\parallel}{\sqrt{2n\log\log n}} \ \text{a.s.}
	\end{equation}
By using Eq. (3.4) of Bercu and Laulin (2019) together with Lemma \ref{lil_Mn(u)}(ii) in \eqref{penul4}, we get the required result.

To establish the second part, we proceed similar to the proof of first part and obtain the following inequality:
	{\footnotesize	\begin{equation}\label{penul5}
			\limsup_{n\to\infty}\frac{\parallel S_n\parallel}{\sqrt{2n\log n \log\log\log n}}\le \limsup_{n\to\infty}\frac{\parallel M_n\parallel}{\sqrt{2n\log n \log\log\log n}} + |\mu| \limsup_{n\to\infty}\frac{\parallel W_n\parallel}{\sqrt{2n\log n \log\log\log n}} \ \text{a.s.}
	\end{equation}}
From Lemma \ref{lil_Mn(u)}(ii), it follows that
	\begin{equation}\label{zero}
		\limsup_{n\to\infty}\frac{\parallel M_n\parallel}{\sqrt{2n\log n \log\log\log n}}=0 \ \text{a.s.}
	\end{equation}
On using Eq. (3.9) of Bercu and Laulin (2019) and \eqref{zero} in \eqref{penul5}, the required result follows. 
\end{proof}
\begin{remark}
For the exact limits of the quantities involved in Theorem \ref{zh22}, we refer the reader to Corollary 3.1 of Zhang (2024).
\end{remark}

\section{Concluding remarks}
First, we studied the number of moves in the MERW with stops. We derived the conditional mean increment of the number of moves and used this result to establish a recursive relation. Solving this recursion yields an explicit expression for the expected number of moves in the MERW with stops. Furthermore, we proved several almost sure convergence results, including the law of large numbers and the law of the iterated logarithm, under different parameter regimes. Later, we discussed the MERW with random step sizes using a martingale approach. In this case, first, we discussed about the number of moves of the walk. A suitable martingale is constructed for the number of moves which is then used to establish the law of large numbers, the quadratic strong law, the law of the iterated logarithm, and the central limit theorem for the number of moves in the MERW with random step sizes. Additionally, the law of large numbers is established in three distinct regimes that are determined by the value of memory parameter, and the quadratic strong law type result is derived for the walk itself. Moreover, the law of iterated logarithm is obtained for two different regimes of the MERW with random step sizes. The results obtained in this paper complement those of Bercu and Laulin (2019), Zhang (2024) and Bercu (2025).

\section*{Acknowledgement}
The second author thanks Government of India for the grant of Prime Minister's Research Fellowship, ID 1003066. 	
		
\end{document}